\newlength{\figwidth}
\newlength{\figheight}
\newlength{\philwidth}
\def\mbb#1{\mathbb{#1}}%           blackboard font
\def\lrs#1{\ensuremath{\left[{#1}\right]}}%
\def\lrv#1{\ensuremath{\lvert{#1}\rvert}}%
\def\blrv#1{\ensuremath{\Bigg\lvert{#1}\Bigg\rvert}}%
\newcommand{\R}{\ensuremath{\mathbb{R}}}
\newcommand{\norm}[1]{\ensuremath{\|#1\|}}
\newcommand{\tr}{\mathrm{Tr}}
\newcommand{\NN}{{\mathbb N}}
\newcommand{\ZZ}{{\mathbb Z}_+}
\newcommand{\babs}[1]{\blrv{#1}}
\newcommand{\abs}[1]{\lrv{#1}}%
\def\Prblr#1{\mbb{P}\kern-2pt\lrs{{#1}}}%
\newcommand{\prob}[1]{{\mathbb P}\Big(#1\Big)}
\newcommand{\expect}[1]{{\mathbb E}\left[#1\right]}
\newcommand{\euclidnorm}[1]{\lvert \lvert #1 \rvert \rvert}
\newcommand{\spnorm}[1]{\lvert \lvert #1 \rvert \rvert_1}
\newcommand{\maxnorm}[1]{\lvert \lvert #1 \rvert \rvert_\infty}
\newcommand{\opnorm}[1]{\lvert \lvert #1 \rvert \rvert_2}
\newcommand{\lc}{\left\{}
\newcommand{\rc}{\right\}}
\newcommand{\lb}{\left(}
\newcommand{\rb}{\right)}
\newcommand{\ls}{\left[}
\newcommand{\rs}{\right]}
\newcommand{\cmplx}{\mathbb C}
\newcommand{\re}{{\mathbb R}}
\newcommand{\tdvarphi}{\tilde{\varphi}}
\newcommand{\tp}{\tilde{\psi}}
\newcommand{\Wc}{W^o}
\newtheorem{cor}{Corollary}
\newtheorem{prop}{Proposition}
\begin{document}

\title{Asymptotic Behavior of the Maximum and Minimum Singular Value of Random Vandermonde Matrices}
%\subtitle{Do you have a subtitle?\\ If so, write it here}

\titlerunning{Asymptotic Behavior of the Maximum and Minimum Singular Value}        % if too long for running head

\author{Gabriel H. Tucci         \and
        Philip A. Whiting %etc.
}

%\authorrunning{Short form of author list} % if too long for running head

\institute{Gabriel H. Tucci  \at
		   Alcatel--Lucent, 600 Mountain Ave\\ 
		   Murray Hill, NJ 07974\\
           \email{gabriel.tucci@alcatel-lucent.com}           %  \\
           \and
           Philip A. Whiting \at
           Alcatel--Lucent, 600 Mountain Ave\\ 
		   Murray Hill, NJ 07974\\
           \email{philip.whiting@alcatel-lucent.com} 
}

\date{Received: date / Accepted: date}
% The correct dates will be entered by the editor

\maketitle

\begin{abstract}
This work examines various statistical distributions in connection with random Vandermonde matrices and their extension to $d$--dimensional phase distributions. Upper and lower bound asymptotics for the maximum singular value are found to be $O(\log^{1/2}{N^{d}})$ and $\Omega((\log N^{d} /(\log \log N^d))^{1/2})$ respectively where $N$ is the dimension of the matrix, generalizing the results in \cite{TW}. We further study the behavior of the minimum singular value of these random matrices. In particular, we prove that the minimum singular value is at most $N\exp(-C\sqrt{N}))$ with high probability where $C$ is a constant independent on $N$. Furthermore, the value of the constant $C$ is determined explicitly. The main result is obtained in two different ways. One approach uses techniques from stochastic processes and in particular, a construction related to the Brownian bridge. The other one is a more direct analytical approach involving combinatorics and complex analysis. As a consequence, we obtain a lower bound for the maximum absolute value of a random complex polynomial on the unit circle, which may be of independent mathematical interest. Lastly, for each sequence of positive integers $\{k_p\}_{p=1}^{\infty}$ we present a generalized version of the previously discussed matrices. The classical random Vandermonde matrix corresponds to the sequence $k_{p}=p-1$. We find a combinatorial formula for their moments and we show that the limit eigenvalue distribution converges to a probability measure supported on $[0,\infty)$. Finally, we show that for the sequence $k_p=2^{p}$ the limit eigenvalue distribution is the famous Marchenko--Pastur distribution. 
\keywords{Random Matrices \and Limit Eigenvalue Distribution \and Vandermonde Matrices}
% \PACS{PACS code1 \and PACS code2 \and more}
\subclass{MSC 15B52 \and MSC 15B51 \and MSC 60B20}
\end{abstract}

\section{Introduction}\label{intro}%

Large dimensional random matrices are of much interest in statistics, where they play a role in multivariate analysis. In his seminal paper, Wigner \cite{wigner} proved that the spectral measure of a wide class of symmetric random matrices of dimension $N$ converges, as $N\to\infty$, to the semicircle law. Much work has since been done on related random matrix ensembles, either composed of (nearly) independent entries, or drawn according to weighted Haar measures on classical groups (e.g., orthogonal, unitary, simplectic). The limiting behavior of the spectrum of such matrices is of considerable interest for mathematical physics and information theory. In addition, such random matrices play an important role in operator algebra studies initiated by Voiculescu, now known as free (non--commutative) probability theory (see \cite{Voi1} and \cite{Voi2} and the many references therein). The study of large random matrices is also related to interesting questions in combinatorics, geometry, algebra and number theory. More recently, the study of large random matrices ensembles with additional structure have been considered. For instance, the properties of the spectral measures of random Hankel, Markov and Toeplitz matrices with independent entries have been studied in \cite{dembo}. 

In this paper we study several aspects of random Vandermonde matrices with unit magnitude complex entries and their generalizations. An $N\times L$ matrix ${\bf V}$ with unit complex entries
is a {\em Vandermonde matrix} if there exist values $\theta_1,\ldots,\theta_L \in [0,1]$ such that
\begin{equation}
\label{eqn_Vandermondedefn}
{\bf V} := \frac{1}{\sqrt{N}}\,
\lb
\begin{array}{lcl}
1              & \ldots & 1 \\
e^{2\pi i\theta_1} & \ldots & e^{2\pi i\theta_L} \\
\vdots         & \ddots & \vdots \\
e^{2\pi i(N-1)\theta_1} & \ldots & e^{2\pi i(N-1)\theta_L} 
\end{array}
\rb
\end{equation}
(see \cite{GC02} or \cite{TW} for more details). A random Vandermonde matrix is produced if the entries of the phase vector $\theta:= \lb \theta_1,\ldots, \theta_L \rb \in [0,1]^L$ are random variables. For the purposes of this paper we assume that the phase vector has i.i.d. components with an absolutely continuous distribution $\nu$.

\par Vandermonde matrices were defined in \cite{SP} and were also called $d$--fold Vandermonde matrices. The case $d=1$ are the matrices in (\ref{eqn_Vandermondedefn}). For $d\geq 2$, these matrices are defined by selecting $L$ random vectors $x_q$ independently in the $d$--dimensional hypercube $[0,1]^{d}$. These vectors are called the vectors of phases. Given a scale parameter $N$, consider the function defined by $\gamma: \lc 0,1,\ldots,N-1 \rc^d \to \lc 0,1,\ldots,N^{d}-1 \rc$ such that for every vector of integers $\ell = (\ell_1,\ell_2,\ldots,\ell_d) \in \lc 0,1,\ldots,N-1 \rc^d$ the value $\gamma(\ell)$ is equal to
$$
\gamma(\ell) := \sum_{j=1}^{d}{N^{j-1}\ell_{j}}.
$$
It is easy to see that this function is a bijection over the set $\lc 0,1,\ldots,N^{d}-1 \rc$. Now we define the $N^{d}\times L$ matrix ${\bf V}^{(d)}$ as
\begin{equation}
{\bf V}^{(d)}_{(\gamma(\ell),q)} := \frac{1}{N^{d/2}}\, \mathrm{exp}\Big(2\pi i \langle \ell,x_q\rangle\Big).
\label{eqn_vandergendef} 
\end{equation}
For the case $d=1$ we drop the upper index and denote this matrix by ${\bf V}$; for $d\geq 2$ we use ${\bf V}^{(d)}$.

Random Vandermonde matrices and their extended versions are a natural construction with a wide range of applications in fields as diverse as finance \cite{Norberg}, signal processing \cite{SP}, wireless communications \cite{Porst}, statistical analysis \cite{Anderson}, security \cite{Sampaio} and biology \cite{Strohmer}. This stems from the close relationship that unit magnitude complex Vandermonde matrices have with the discrete Fourier transform.  Among these, there is an important recent application for signal reconstruction using noisy samples (see \cite{SP}) where an asymptotic estimate is obtained for the mean squared error. In particular, and as was shown in \cite{SP}, generalized Vandermonde matrices play an important role in the minimum mean squared error estimation of vector fields, as might be measured in a sensor network. In such networks, the parameter $d$ is the dimension of the field being measured, $L$ is the number of sensors and $N$ can be taken as the approximate bandwidth of the measured signal per dimension. This asymptotic can be calculated as a random eigenvalue expectation, whose limit distribution depends on the signal dimension $d$.  In the case $d=1$ the limit is via random Vandermonde matrices. As $d\rightarrow \infty$ the Marchenko--Pastur limit distribution is shown to apply. Further applications were treated in \cite{GC02} including source identification and wavelength estimation. 

\par One is typically interested in studying the behavior of these matrices as both $N$ and $L$ go to infinity at a given ratio, $\lim_{L,N\to\infty}\frac{L}{N^{d}}=\beta$. In \cite{GC02}, important results were obtained for the case $d=1$. In particular, the limit of the moments of ${\bf V^{*}}{\bf V}$ was derived and a combinatorial formula for the asymptotic moments was given under the hypothesis of continuous density. In \cite{TW}, these results were extended to more general densities and it was also proved that these moments arise as the moments of a probability measure $\mu_{\nu,\beta}$ supported on $[0,\infty)$. This measure depends on the measure $\nu$, the distribution of the phases, and on the value of $\beta$.

\par In \cite{TW}, the behavior of the maximum eigenvalue was studied and tight upper and lower bounds were found. Here we extend these results and study the maximum eigenvalue of the $d$--fold extended Vandermonde matrix. More specifically, we study the asymptotic behavior of the maximum eigenvalue of the matrix ${{\bf V}^{(d)}}^{*}{\bf V}^{(d)}$ and derive upper and lower bounds.

\par A natural question is how the smallest singular value behaves as $N\to\infty$, and this paper is one of the first to address this question. Here we restrict to the case $d=1$. The matrix $\bf{V}^{*}\bf{V}$ is an $L\times L$ positive definite random matrix with eigenvalues 
$$
0\leq \lambda_{1}(N)\leq \ldots \leq \lambda_{L}(N).
$$
The singular values of $\bf{V}$ are by definition the eigenvalues of $\sqrt{\bf{V}^{*}\bf{V}}$. Therefore, $s_{i}(N)=\sqrt{\lambda_i(N)}$. On one hand, it is clear that if $L>N$ the matrix ${\bf V^{*}}{\bf V}$ is of size $L\times L$ and rank $N$. Therefore, if $\beta>1$ the asymptotic limit measure has an atom at zero of size at least $1-1/\beta$. On the other hand, if $L=N$, the random matrix ${\bf V^{*}}{\bf V}$ has determinant
\begin{equation}
\det({\bf V^{*}}{\bf V)}=|\det({\bf V})|^{2}=\frac{1}{N^{N}}\cdot\prod_{1\leq p<q\leq N}{|e^{2\pi i\theta_{p}}-e^{2\pi i\theta_{q}}|^2}.
\end{equation}
This determinant is zero if and only if there exist distinct $p$ and $q$ such that  $\theta_{p}=\theta_{q}$. This is an event of zero probability if the probability measure has a density. Therefore, the minimum eigenvalue value $\lambda_1(N)$ is positive with probability 1 and converges to 0 as $N$ increases. In this work, we show that with high probability $\lambda_{1}(N)\leq N^2\exp(-C\sqrt{N})$. As a consequence of our argument we show that with high probability 
\begin{equation}
\max \Bigg\{\prod_{i=1}^{N}|z-z_i|^2 \,\,:\,\,|z|=1\Bigg\}\geq \exp(C\sqrt{N})
\end{equation}
where $z_k=e^{2\pi i\theta_k}$ and $\{\theta_{1},\ldots,\theta_{N}\}$ are i.i.d on $[0,1]$. Moreover, we explicitly determine the constant $C$. We believe that this may prove to be of independent mathematical interest. Additionally, we show the absence of finite moments for the matrix $({\bf V^{*}}{\bf V})^{-1}.$

\par Finally, we present a generalized version of the previously discussed random Vandermonde matrices. More specifically, consider an increasing sequence of integers $\{k_{p}\}_{p=1}^{\infty}$ and let $\{\theta_{1},\ldots,\theta_{N}\}$ be i.i.d. random variables uniformly distributed on the unit interval $[0,1]$. Let ${\bf V}$ be the $N\times N$ random matrix defined as
\begin{equation} 
V(p,q):=\frac{1}{\sqrt{N}}z_{q}^{k_p}
\end{equation} 
where $z_{q}:=e^{2\pi i\theta_{q}}$. Note that if we consider the sequence $k_p=p-1$ then the matrix ${\bf  V}$ is the usual random Vandermonde matrix defined in  (\ref{eqn_Vandermondedefn}). We study the limit eigenvalue distribution of this matrix ${\bf X}:={\bf VV}^{*}$ and in particular its asymptotic moments. We also find a combinatorial formula for its moments and show that for every sequence there exists a unique probability measure on $[0,\infty)$ with these moments. Finally, we show that for the sequence $k_p=2^p$ the limit eigenvalue distribution is the famous Marchenko--Pastur distribution. 

\par The rest of the paper proceeds as follows. In Section \ref{sec:rand_mat_ess}, we present some preliminaries in random matrix theory, the Littlewood--Offord theory (\cite{taovu}), set up some notation and terminology, and review some known results for random Vandermonde matrices. In Section \ref{tracelog}, we derive a formula for the trace log and log determinant of the random Vandemonde matrices. We also prove the absence of finite moments for the matrix ${\bf M}^{*}{\bf M}$ where ${\bf M}={\bf V}^{-1}$. In Section \ref{maxeig}, we present upper and lower bounds for the behavior of the maximum singular value of ${\bf V}^{(d)}$ in the general case. In Section \ref{sec_mineig}, we study the behavior of the  minimum singular value of ${\bf V}$. In Section \ref{sec_numer}, we present some numerical results that suggest the absence of an atom at zero for the limit eigenvalue for the square case. In the last Section, we analyze the moments and limit eigenvalue distributions of the generalized version of the random Vandermonde matrices as described before.

\section{Preliminaries}\label{sec:rand_mat_ess}

\subsection{Random Matrix Theory}

Throughout the paper we denote by ${\bf A}^{*}$ the complex conjugate transpose of the matrix ${\bf A}$ and by ${\bf I}_{N}$ the $N\times N$ identity matrix. We let $\mathrm{Tr}({\bf A}):=\sum_{i=1}^{N}{a_{ii}}$ be the non--normalized trace, where $a_{ii}$ are the diagonal elements of the matrix ${\bf A}$. We also let $\mathrm{tr}_{N}({\bf A})=\frac{1}{N}\mathrm{Tr}({\bf A})$ be the normalized trace. Let ${\bf A}_{N}=(a_{ij}(\omega))_{i,j=1}^{N}$ be a random matrix where the entries $a_{ij}$ are random variables on some probability space. We say that the random matrices ${\bf A}_{N}$ converge to a random variable $A$ in distribution if the moments of ${\bf A}_{N}$ converge to the moments of the random variable $A$, and denote this by ${\bf A}_{N}\to A$.

\par Note that for a Hermitian $N\times N$ matrix ${\bf A} = {\bf A}^{*}$, the collection of moments corresponds to a probability measure $\mu_{{\bf A}}$ on the real line, determined by $\mathrm{tr}_{N}({\bf A}^{k})=\int_{\mathbb{R}}{t^{k}\,d\mu_{A}(t)}$. This measure is given by the eigenvalue distribution of ${\bf A}$, i.e., it puts mass $\frac{1}{N}$ on each of the eigenvalues of ${\bf A}$ (counted with multiplicity):
\begin{equation}
 \mu_{{\bf A}}=\frac{1}{N}\sum_{i=1}^{N}{\delta_{\lambda_{i}}}
\end{equation}
where $\lambda_{1},\ldots,\lambda_{N}$ are the eigenvalues of ${\bf A}$. In the same way, for a random matrix ${\bf A}$, $\mu_{{\bf A}}$ is given by the averaged eigenvalue distribution of ${\bf A}$. Thus, moments of random matrices with respect to the averaged trace contain exactly the type of information in which one is usually interested when dealing with random matrices.

\par Consider an $N\times L$ random Vandermonde matrix $\bf V$ with unit complex entries, as given in  (\ref{eqn_Vandermondedefn}). The variables $\theta_{\ell}$ are called the phase distributions and $\nu$ its probability distribution. It was proved in \cite{GC02} that if $d\nu=f(x)\,dx$ for $f(x)$ continuous in $[0,1]$, then the matrices ${\bf V^{*}\bf V}$ have finite asymptotic moments. In other words, the limit
\begin{equation}
m_{r}^{(\beta)}=\lim_{N\to\infty}{\mathbb{E}\Big[\mathrm{tr}_{L}\Big(({\bf V^{*}V})^r \Big)\Big]}
\end{equation}
exists for all $r\geq 0$. Moreover, 
\begin{equation}
m_{r}^{(\beta)}=\sum_{\rho\in\mathcal{P}(r)}{K_{\rho,\nu}\beta^{|\rho|-1}}
\end{equation}
where $K_{\rho,\nu}$ are positive numbers indexed by the partition set. We call these numbers {\it Vandermonde expansion coefficients}. 

The fact that all the moments exist is not enough to guarantee the existence of a limit probability measure having these moments. However, it was proved in \cite{TW} that the eigenvalues of ${\bf V^{*}V}$ converge in distribution to a probability measure $\mu_{\beta,\nu}$ supported on $[0,\infty)$ where $\beta=\lim_{N\to\infty}{\frac{L}{N}}$. More precisely, 
$$
m_{r}^{(\beta)}=\int_{0}^{\infty}{t^{r}\,d\mu_{\beta,\nu}(t)}.
$$ 
In \cite{TW}, the class of functions for which the limit eigenvalue distribution exists was enlarged to include unbounded densities and lower bounds and upper bounds for the maximum eigenvalue were found. We suggest that the interested reader look at the articles \cite{GC02} and \cite{TW} for more properties on the Vandermonde expansion coefficients as well as methods and formulas to compute them. 

\subsection{Littlewood--Offord Theory}

Let $v_1,\ldots,v_n$ be $n$ vectors in $\mathbb{R}^d$, which we normalise to all have length at least $1$. For any given radius $\Delta > 0$, we consider the small ball probability
$$
\displaystyle p(v_1,\ldots,v_n,\Delta) := \sup_B {\mathbb{P}}( \eta_1 v_1 + \ldots + \eta_n v_n \in B )
$$
where $\eta_1,\ldots,\eta_n$ are i.i.d. Bernoulli signs (i.e. taking on values $+1$ or $-1$ independently with a probability of $1/2$), and $B$ ranges over all (closed) balls of radius $\Delta$. The Littlewood--Offord problem is to compute the quantity
$$
\displaystyle p_d(n,\Delta) := \sup_{v_1,\ldots,v_n} p(v_1,\ldots,v_n,\Delta)
$$
where $v_1,\ldots,v_n$ range over all vectors in $\mathbb{R}^d$ of length at least $1$. Informally, this number measures the extent to which a random walk of length $n$ (with all steps of size at least $1$) can concentrate into a ball of radius $\Delta$.

The one dimensional case of this problem was solved by Erd\"os. First, one observes that one can normalise all the $v_i$ to be at least $+1$ (as opposed to being at most $-1$). In the model case when $\Delta < 1$, he proved that
$$
\displaystyle p_1(n,\Delta) = \frac{1}{2^n}\binom{n}{\lfloor n/2\rfloor} = \frac{\sqrt{\frac{2}{\pi}}+o(1)}{\sqrt{n}}
$$
when $0 \leq \Delta < 1$ (the bound is attained in the extreme case $v_1=\ldots=v_n=1$). A similar argument works for higher values of $\Delta$, using Dilworth's Theorem instead of Sperner's Theorem, and gives the exact value
\begin{equation}\label{LOT}
p_1(n,\Delta) = \frac{1}{2^n}\sum_{j=1}^s \binom{n}{m_j} = \frac{s\sqrt{\frac{2}{\pi}}+o(1)}{\sqrt{n}}
\end{equation}
whenever $n \geq s$ and $s-1 \leq \Delta < s$ for some natural number $s$, where $\binom{n}{m_1},\ldots,\binom{n}{m_s}$ are the $s$ largest binomial coefficients of $\binom{n}{1}, \ldots, \binom{n}{n}$. See \cite{taovu} for more details on the Littlewood--Offord Theory.

\section{Trace Logarithm Formula and the Inverse of a Vandermonde Matrix}\label{tracelog}

\subsection{Inverse of a Vandermonde Matrix}
Given a vector $x$ in $\cmplx^N$, we define $\sigma^m_r(x)$ to be the sum of all $r$--fold products of the components of $x$ not involving the $m$--th coordinate. In other words, 
$$
\sigma^m_r(x) = \sum_{\rho_r^m} \prod_{k\in \rho_r^m} x_k
$$
where $\rho_r^m$ is a subset of  $\lc x_1,x_2,\ldots,x_{m-1},x_{m+1},\ldots,x_N \rc$ of cardinality $r$.

The following Theorem was proved in \cite{GEpaper}.
\begin{theorem} \label{GEthm}
Let ${\bf V}$ be a square $N\times N$ matrix given by
\begin{equation}
\label{eqn_Vandefn}
{\bf V} :=
\lb
\begin{array}{llll}
1      & 1        & \ldots & 1 \\
x_1 & x_2 &\ldots & x_N \\
\vdots  & \vdots       & \ddots & \vdots \\
x_1^{N-1} & x_{2}^{N-1} & \ldots & x_N^{N-1} 
\end{array}
\rb
\end{equation}
with non--zero entries. Then its inverse ${\bf M}:={\bf V}^{-1}$ is the matrix with entries
$$
\mathbf{M}(m,n) = \frac{(-1)^{N-n} \sigma_{N-n}^m(x)}{\prod_{j \neq m} \lb x_m - x_j \rb}
$$
with $m,n \in \lc 1,2,\ldots,N \rc$.
\end{theorem}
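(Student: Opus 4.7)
The plan is to recognize the claimed formula as the coefficient expansion of the Lagrange interpolation basis at the nodes $x_1,\ldots,x_N$. For each $m\in\{1,\ldots,N\}$ I would introduce the degree $N-1$ polynomial
$$
L_m(t) \;:=\; \frac{\prod_{j\neq m}(t-x_j)}{\prod_{j\neq m}(x_m-x_j)},
$$
whose defining property is $L_m(x_k)=\delta_{mk}$. Since the entries $x_i$ are all distinct (otherwise $\mathbf{V}$ is singular, contradicting the invertibility we are exploiting), these polynomials are well defined.

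Next I would expand $L_m(t)$ in the monomial basis $\{1,t,\ldots,t^{N-1}\}$. Writing
$$
\prod_{j\neq m}(t-x_j) \;=\; \sum_{r=0}^{N-1}(-1)^{r}\sigma^m_{r}(x)\,t^{N-1-r},
$$
which is just the elementary-symmetric expansion of a product, and reindexing by $n:=N-r$ so that the exponent of $t$ becomes $n-1$, I obtain
$$
L_m(t)\;=\;\sum_{n=1}^{N} c_{m,n}\, t^{\,n-1}, \qquad c_{m,n}\;=\;\frac{(-1)^{N-n}\sigma^m_{N-n}(x)}{\prod_{j\neq m}(x_m-x_j)}.
$$
Thus the coefficients $c_{m,n}$ are exactly the entries $\mathbf{M}(m,n)$ proposed in the theorem.

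Finally I would verify that the matrix $\mathbf{C}=(c_{m,n})$ is indeed $\mathbf{V}^{-1}$. Evaluating $L_m$ at $t=x_k$ and using the interpolation property yields
$$
\delta_{mk}\;=\;L_m(x_k)\;=\;\sum_{n=1}^{N} c_{m,n}\,x_k^{n-1}\;=\;\sum_{n=1}^{N} c_{m,n}\,\mathbf{V}(n,k),
$$
i.e.\ $\mathbf{C}\mathbf{V}=\mathbf{I}_N$. Since $\mathbf{V}$ is square and nonsingular, this identifies $\mathbf{C}=\mathbf{V}^{-1}=\mathbf{M}$, which is the desired formula.

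No serious obstacle is expected; the only subtle point is the bookkeeping of the sign $(-1)^{N-n}$ and the subscript $N-n$ on $\sigma^m_{\cdot}$, which is forced by the reindexing $n=N-r$ in the elementary-symmetric expansion. The nonvanishing of $\prod_{j\neq m}(x_m-x_j)$ (equivalently, the hypothesis that the $x_i$ are distinct) is used implicitly, and it is exactly the condition under which $\mathbf{V}$ is invertible in the first place.
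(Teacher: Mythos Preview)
Your argument via the Lagrange interpolation basis is correct: the expansion of $\prod_{j\neq m}(t-x_j)$ in elementary symmetric functions, the reindexing $n=N-r$, and the verification $\mathbf{C}\mathbf{V}=\mathbf{I}_N$ are all in order, and the distinctness of the $x_i$ is exactly the invertibility hypothesis. The paper does not give its own proof of this theorem; it simply cites \cite{GEpaper} (Macon and Spitzbart), so there is no in-paper argument to compare against, but your Lagrange-basis derivation is precisely the standard route and is what one would expect the cited reference to contain.
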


\begin{remark} 
Let $\nu_1 \leq \nu_2 \leq \ldots \leq \nu_N$ be the eigenvalues of ${\bf M}^* {\bf M}$
and let $\lambda_1 \leq \lambda_2 \leq \ldots \leq \lambda_N$ be the corresponding eigenvalues of
${\bf V}^*{\bf V}$, which are the same as for ${\bf V}{\bf V}^*$. Note that 
$$
\nu_k = \lambda^{-1}_{N-(k-1)}
$$
and in particular $\nu_N = \lambda_1^{-1}$. Therefore, to understand the behavior of $\lambda_{1}$ it is enough to understand the behavior of $\nu_{N}$. 
\end{remark}

Here we prove a Theorem about the trace log formula for random Vandermonde matrices and a Theorem about the non--existence of the moments of ${\bf M}^* {\bf M}$, but first we need the following Lemma. 

\begin{lemma}\label{lemma1}
Let ${\bf M}$ be an invertible $N\times N$ matrix with columns $X_{1},\ldots,X_{N}$ and let $R_{1},\ldots,R_{N}$ be the rows of ${\bf M}^{-1}$. Let ${\mathcal V}_i$ be the subspace generated by all the column vectors except $X_{i}$, i.e.,
$$
{\mathcal V}_i=\mathrm{span}\{X_{1},X_{2},\ldots,X_{i-1},X_{i+1},\ldots,X_{N}\}.
$$
Then the distance between the vector $X_{i}$ and the subspace ${\mathcal V}_i$ is
$$
\mathrm{dist}(X_{i},{\mathcal V}_i)=\frac{1}{\norm{R_{i}}}.
$$
Moreover,
$$
\sum_{i=1}^{N}{\lambda_{i}({\bf M}^* {\bf M})^{-1}}=\sum_{i=1}^{N}{\mathrm{dist}(X_{i},{\mathcal V}_i)^{-2}}.
$$
\end{lemma}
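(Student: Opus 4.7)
The plan is to derive both identities directly from the defining relation ${\bf M}^{-1}{\bf M}={\bf I}_{N}$, which in row-column form reads $R_{i}X_{j}=\delta_{ij}$ for all $i,j$.

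For the distance formula, I would first read the equations $R_{i}X_{j}=0$ for $j\neq i$ as saying that the column vector $R_{i}^{*}\in\cmplx^{N}$ is orthogonal to every generator of ${\mathcal V}_{i}$. Since $\bf M$ is invertible the columns $X_{1},\ldots,X_{N}$ form a basis of $\cmplx^{N}$, so $\dim{\mathcal V}_{i}=N-1$ and its orthogonal complement is the line spanned by $R_{i}^{*}$. The distance from $X_{i}$ to ${\mathcal V}_{i}$ is therefore the length of the projection of $X_{i}$ onto this line; using the unit vector $R_{i}^{*}/\norm{R_{i}}$ together with the remaining identity $R_{i}X_{i}=1$, this projection has length $|R_{i}X_{i}|/\norm{R_{i}}=1/\norm{R_{i}}$, as claimed.

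For the sum formula, I would square the reciprocal distances and add over $i$ to obtain $\sum_{i=1}^{N}\norm{R_{i}}^{2}$, which is precisely the squared Frobenius norm of ${\bf M}^{-1}$. Expressing this as a trace,
$$
\sum_{i=1}^{N}\mathrm{dist}(X_{i},{\mathcal V}_{i})^{-2} \;=\; \mathrm{Tr}\bigl(({\bf M}^{-1})^{*}{\bf M}^{-1}\bigr) \;=\; \mathrm{Tr}\bigl(({\bf M}{\bf M}^{*})^{-1}\bigr).
$$
Since $\bf M$ is square and invertible, ${\bf M}{\bf M}^{*}$ and ${\bf M}^{*}{\bf M}$ share the same strictly positive spectrum, so this last trace equals $\sum_{i=1}^{N}\lambda_{i}\bigl(({\bf M}^{*}{\bf M})^{-1}\bigr)$, which is the required right-hand side.

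I do not expect any substantive obstacle: the only care needed is a bookkeeping one, namely tracking the row-versus-column and sesquilinear conventions so that $R_{i}X_{i}=1$ translates into a projection of length $1/\norm{R_{i}}$ rather than its complex conjugate. No asymptotic, combinatorial, or probabilistic machinery is invoked at this step; the Lemma is a purely linear-algebraic identity that will later feed into the distance-to-subspace estimates used to control the minimum eigenvalue.
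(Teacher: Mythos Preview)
Your proposal is correct and follows essentially the same route as the paper: both arguments use the relation $R_iX_j=\delta_{ij}$ to identify $R_i^{*}$ as a normal vector to ${\mathcal V}_i$ and then read off the distance, and both obtain the sum formula by recognizing $\sum_i\norm{R_i}^2$ as $\mathrm{Tr}\bigl(({\bf M}^{-1})^{*}{\bf M}^{-1}\bigr)$. Your version is in fact slightly more explicit (the dimension count for ${\mathcal V}_i^{\perp}$ and the passage from ${\bf M}{\bf M}^{*}$ to ${\bf M}^{*}{\bf M}$), but there is no substantive difference.
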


\begin{proof}
The result follows from an identity involving the singular values of ${\bf M}$. By definition, the inner product $\langle R_k,X_{\ell}\rangle= \delta_{k,\ell}$ so that $R_k$ is orthogonal to
${\mathcal V}_k$. Hence,
$$
d(X_k, {\mathcal V}_k) = \frac{1}{\euclidnorm{R_k}}.
$$
Let $\lambda_{k}({\bf M}^* {\bf M})$ be the eigenvalues of ${\bf M}^* {\bf M}$. Then,
$$
\tr \lb \lb {\bf M}^{-1} \rb^* {\bf M}^{-1} \rb = \sum_{i=1}^{N}{\lambda_{i}({\bf M}^* {\bf M})^{-1}}.
$$
On the other hand, 
$$
\tr \lb \lb {\bf M}^{-1} \rb^* {\bf M}^{-1} \rb = \sum_{1\leq k,\ell\leq N} \vert \lb {\bf M}^{-1} \rb_{k,\ell} \vert^2
= \sum_{k=1}^{N} \euclidnorm{R_k}^2
$$
completing the proof.
\qed \end{proof}

\begin{theorem}
Let ${\bf V}$ be a square random Vandermonde matrix of dimension $N$ with i.i.d. phases distributed according to a measure $\nu$ with continuous density $f(x)$ over $[0,1]$. Then
\begin{equation}
\mathbb{E}\Big( \mathrm{tr}_{N}\log ({\bf{V}^{*}\bf{V}})\Big) = (N-1)\,\mathbb{E}\big(\log |1-e^{2\pi i\theta}|\big)-\log(N).
\end{equation}
\end{theorem}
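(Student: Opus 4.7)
The proof is essentially one algebraic identity combined with the explicit Vandermonde determinant formula (3) already displayed in the excerpt. The plan is to observe that $\mathbf{V}^{*}\mathbf{V}$ is positive definite almost surely (since the phases are i.i.d.\ with continuous density, so $\theta_{p}\neq\theta_{q}$ a.s.), so the matrix logarithm is well defined via functional calculus and
\[
\mathrm{tr}_{N}\log(\mathbf{V}^{*}\mathbf{V}) \;=\; \frac{1}{N}\sum_{i=1}^{N}\log\lambda_{i} \;=\; \frac{1}{N}\log\det(\mathbf{V}^{*}\mathbf{V}).
\]
Substituting the Vandermonde determinant formula (equation (3) in the excerpt), taking logarithms, and dividing by $N$ gives
\[
\mathrm{tr}_{N}\log(\mathbf{V}^{*}\mathbf{V}) \;=\; -\log N \;+\; \frac{2}{N}\sum_{1\le p<q\le N}\log\bigl|e^{2\pi i\theta_{p}}-e^{2\pi i\theta_{q}}\bigr|.
\]

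Next I would take expectations. Because the $\theta_{k}$ are i.i.d., all $\binom{N}{2}$ pair terms have a common mean, so
\[
\mathbb{E}\bigl[\mathrm{tr}_{N}\log(\mathbf{V}^{*}\mathbf{V})\bigr] \;=\; -\log N \;+\; \frac{2}{N}\binom{N}{2}\,\mathbb{E}\bigl[\log\bigl|e^{2\pi i\theta_{1}}-e^{2\pi i\theta_{2}}\bigr|\bigr] \;=\; -\log N \;+\; (N-1)\,\mathbb{E}\bigl[\log\bigl|e^{2\pi i\theta_{1}}-e^{2\pi i\theta_{2}}\bigr|\bigr].
\]
To convert this into the form stated in the theorem I would factor out $e^{2\pi i\theta_{q}}$ to get $|e^{2\pi i\theta_{p}}-e^{2\pi i\theta_{q}}|=|1-e^{2\pi i(\theta_{p}-\theta_{q})}|$, and interpret $\theta$ on the right-hand side as the random variable $\theta_{1}-\theta_{2}\bmod 1$ (which is uniform on $[0,1]$ in the usual benchmark case where $\nu$ is uniform, and in general is given by the circular convolution $f\star\tilde f$).

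The only subtle point is justifying the finiteness of the expectation and the interchange of sum and expectation, i.e.\ that $\mathbb{E}\bigl[\log\bigl|1-e^{2\pi i\theta}\bigr|\bigr]$ is well defined despite the logarithmic singularity at $\theta=0$. Since $f$ is continuous on the compact interval $[0,1]$ it is bounded, and the classical Mahler measure integral $\int_{0}^{1}\bigl|\log|1-e^{2\pi it}|\bigr|\,dt$ is finite, so the joint density of $(\theta_{p},\theta_{q})$ is integrable against $\log|1-e^{2\pi i(\theta_{p}-\theta_{q})}|$ and Fubini legitimizes all the swaps above. This is the only analytic content of the argument; every other step is algebraic, and the "main obstacle'' is really just making sure that the normalization of $\mathbf{V}$ by $1/\sqrt{N}$ produces exactly the $-\log N$ correction and not something else.
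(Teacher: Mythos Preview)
Your proposal is correct and follows essentially the same route as the paper: both arguments reduce $\mathrm{tr}_{N}\log(\mathbf{V}^{*}\mathbf{V})$ to $\frac{1}{N}\log\det(\mathbf{V}^{*}\mathbf{V})$, plug in the Vandermonde determinant formula, and then exploit the i.i.d.\ structure of the phases to collapse the $\binom{N}{2}$ pair expectations into a single one. Your treatment is in fact slightly more careful than the paper's on two points: you justify the integrability of $\log|1-e^{2\pi i\theta}|$ against the bounded density (the paper is silent on this), and you correctly flag that the $\theta$ appearing in the final expression must be read as the law of $\theta_{1}-\theta_{2}\bmod 1$ rather than a single phase, a distinction that only disappears in the uniform case.
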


\begin{proof}
Let $0\leq \lambda_{1}\leq \lambda_{2}\leq \ldots \leq \lambda_{N}$ be the eigenvalues of ${\bf V}^{*}{\bf V}$. Note that $\lambda_{1}>0$ with probability one. It is clear that, 
\begin{equation}
\log\det({\bf V}^{*}{\bf V})=\sum_{i=1}^{N}{\log(\lambda_{i})}.
\end{equation}
On the other hand,
$$
\det({\bf V}^{*}{\bf V})=\frac{1}{N^{N}}\prod_{1\leq p<q\leq N}{|e^{2\pi i\theta_{p}}-e^{2\pi i\theta_{q}}|^2}
$$
hence
\begin{equation}
\log\det({\bf V}^{*}{\bf V})=\sum_{p<q}{2\log(|e^{2\pi i\theta_{p}}-e^{2\pi i\theta_{q}}|)}-N\log(N).
\end{equation}
Since the phases are identically distributed, it is easy to see that the expectation $\mathbb{E}\big[\log\det({\bf V}^{*}{\bf V})\big]$
is
\begin{equation}\label{eq1}
\frac{N(N-1)}{2}\,\,\mathbb{E}\Big(2\log |e^{2\pi i\theta_{p}}-e^{2\pi i\theta_{q}}| \Big)-N\log(N) 
\end{equation}
which is
\begin{equation}
N(N-1)\,\mathbb{E}\Big(\log |1-e^{2\pi i\theta}| \Big)-N\log(N).
\end{equation}
Since for every invertible Hermitian matrix ${\bf A}$, we have that $\mathrm{Tr}\log({\bf A})=\log\det({\bf A})$, in particular we have that
\begin{equation}\label{eq2}
\mathrm{tr}_{N}\log({\bf V}^{*}{\bf V})=\frac{1}{N}\log\det({\bf V}^{*}{\bf V}).
\end{equation}
Combining (\ref{eq1}) and (\ref{eq2}) we see that 
\begin{equation}
\mathbb{E}\Big(\mathrm{tr}_{N}\log({\bf V}^{*}{\bf V})\Big)=(N-1)\,\mathbb{E}\big(\log |1-e^{2\pi i\theta}| \big)-\log(N).
\end{equation}
\qed \end{proof}

\begin{theorem}\label{momentsinv}
Let ${\bf V}$ be a square $N\times N$ random Vandermonde matrix. Then for every $N\geq 2$, the matrix ${\bf V^{*}}{\bf V}$ is invertible with probability $1$ and 
$$
\mathbb{E}\big(\mathrm{tr}_{N}\big(({\bf V^{*}}{\bf V})^{-p}\big)\big)=\infty
$$
for every $p\geq 1$.
\end{theorem}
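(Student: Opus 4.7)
Invertibility of ${\bf V}^{*}{\bf V}$ with probability one follows immediately from the Vandermonde determinant formula $\det({\bf V}^{*}{\bf V})=N^{-N}\prod_{p<q}|e^{2\pi i\theta_{p}}-e^{2\pi i\theta_{q}}|^{2}$ already recorded in the introduction, since absolute continuity of the phase distribution rules out the collisions $\theta_{p}=\theta_{q}$. For the main assertion my plan is to reduce first to the case $p=1$. Since every diagonal entry of ${\bf V}^{*}{\bf V}$ equals $1$, its trace is deterministically $N$, so $\lambda_{1}\leq 1$ almost surely and hence $\lambda_{1}^{-p}\geq \lambda_{1}^{-1}$ for every $p\geq 1$. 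Combining this with the elementary inequalities $\mathrm{tr}_{N}(({\bf V}^{*}{\bf V})^{-p})\geq \lambda_{1}^{-p}/N$ and $\lambda_{1}^{-1}\geq \mathrm{tr}_{N}(({\bf V}^{*}{\bf V})^{-1})$ reduces the theorem to showing $\mathbb{E}[\mathrm{tr}_{N}(({\bf V}^{*}{\bf V})^{-1})]=\infty$.

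The case $p=1$ is then handled by combining Lemma \ref{lemma1} with Theorem \ref{GEthm}. Writing $R_{m}$ for the $m$--th row of ${\bf V}^{-1}$, setting $x_{k}=e^{2\pi i\theta_{k}}$, and absorbing the $1/\sqrt{N}$ factor appearing in the definition of ${\bf V}$, Theorem \ref{GEthm} gives
\[
\|R_{m}\|^{2}\;=\;\frac{N\sum_{n=1}^{N}|\sigma_{N-n}^{m}(x)|^{2}}{\prod_{j\neq m}|x_{m}-x_{j}|^{2}}\;\geq\;\frac{N}{\prod_{j\neq m}|x_{m}-x_{j}|^{2}},
\]
the inequality retaining only the $n=N$ term, for which $\sigma_{0}^{m}(x)=1$. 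Lemma \ref{lemma1} then yields
\[
\mathrm{tr}_{N}\big(({\bf V}^{*}{\bf V})^{-1}\big)\;=\;\frac{1}{N}\sum_{m=1}^{N}\|R_{m}\|^{2}\;\geq\;\sum_{m=1}^{N}\prod_{j\neq m}|x_{m}-x_{j}|^{-2},
\]
so it is enough to prove $\mathbb{E}\big[\prod_{j\neq 1}|x_{1}-x_{j}|^{-2}\big]=\infty$.

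The main obstacle, and essentially the only substantive analytical step, is to deal with the product of many interacting singularities. Since all $x_{j}$ lie on the unit circle, $|x_{1}-x_{j}|\leq 2$ and therefore
\[
\prod_{j\neq 1}|x_{1}-x_{j}|^{-2}\;\geq\;4^{-(N-2)}\,|x_{1}-x_{2}|^{-2},
\]
which decouples the problem and reduces it to showing $\mathbb{E}[|x_{1}-x_{2}|^{-2}]=\infty$. This in turn follows from $|e^{2\pi it}-e^{2\pi is}|=2|\sin\pi(t-s)|\leq 2\pi|t-s|$: choosing any subinterval $[a,b]\subset[0,1]$ on which the continuous density $f$ of $\nu$ satisfies $f\geq\varepsilon>0$, the contribution of the event $\{\theta_{1},\theta_{2}\in[a,b]\}$ to the expectation is bounded below by $(\varepsilon^{2}/4\pi^{2})\int_{a}^{b}\int_{a}^{b}(t-s)^{-2}\,dt\,ds=\infty$. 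This is the only place where the continuous density hypothesis on $\nu$ enters in an essential way, and once this divergence is in hand the theorem follows via the chain of reductions above.
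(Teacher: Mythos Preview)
Your proof is correct and follows essentially the same approach as the paper: both reduce to $p=1$, invoke Theorem~\ref{GEthm} via Lemma~\ref{lemma1} to isolate the term $\prod_{j\neq 1}|x_{1}-x_{j}|^{-2}$, and then show this quantity has infinite expectation because the single--pair integral $\int |1-e^{i\theta}|^{-2}\,d\theta$ diverges. The only cosmetic difference is that the paper factors the product expectation using independence, while you bound pointwise via $|x_{1}-x_{j}|\leq 2$; your version has the mild advantage of handling a general continuous density without appealing to rotation invariance.
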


\begin{proof}  
It is enough to prove the case $p=1$ since the other cases follow from this one. As we mentioned in the introduction, the matrix ${\bf V^{*}}{\bf V}$ is invertible with probability one since its determinant its non--zero with probability one. From Lemma \ref{lemma1} we see that 
$$
\mathbb{E}\big(\mathrm{tr}_{N}\big(({\bf V^{*}}{\bf V})^{-1}\big)\big) = \frac{1}{N}\sum_{m,n=1}^{N}{\mathbb{E}\big( |\mathbf{M}(m,n)|^2\big)}
$$
where $\mathbf{M}=\mathbf{V}^{-1}$. In particular, 
$$
\mathbb{E}\big(\mathrm{tr}_{N}\big(({\bf V^{*}}{\bf V})^{-1}\big)\big) \geq \frac{1}{N}\mathbb{E}\big(|\mathbf{M}(1,N)|^2\big).
$$
Now using Theorem \ref{GEthm} we see that 
$$
\mathbb{E}\big(|\mathbf{M}(1,N)|^2\big)=\Bigg( \frac{1}{2\pi}\int_{0}^{2\pi}{\frac{d\theta}{|1-e^{i\theta}|^2}}\Bigg)^{N-1}=\infty.
$$
\qed \end{proof}

\section{Maximum Eigenvalue}\label{maxeig}

Let ${\bf X}$ be the $L\times L$ matrix defined as ${\bf X} := {{\bf V}^{(2)}}^{*} {\bf V}^{(2)}$. Suppose that the 
phases $(\theta_k,\psi_k)$ are selected i.i.d. on $[0,1]^2$ with density $f(x)$. We further suppose, as in \cite{TW}, that $(\theta_k,\psi_k) - (\theta_1,\psi_1)$ given $\theta_1,\psi_1$ has a conditional density (which exists for all  $(\theta_1,\psi_1)$ if it exists for one) and denote this density by 
$f$ ignoring its dependence on $(\theta_1,\psi_1)$, as it only appears through $\maxnorm{f}$. It can be shown that 
${\bf X}$ has the same eigenvalues as the matrix ${\bf A}$ whose entries are 
\begin{equation}\label{eqn_Amat}
A(k,m) := D_N\Big(2\pi(\theta_k - \theta_m)\Big) D_N\Big(2\pi(\psi_k - \psi_m)\Big)
\end{equation}
where 
$$
D_N(x) := %\Bigg( 
            \frac{\sin(\frac{N}{2}x)}{N\sin(\frac{x}{2})} %\Bigg)
$$
is the Dirichlet kernel (see e.g. \cite{TW}). Similarly, in the case $d\geq 3$ we obtain a product of $d$ Dirichlet kernels. Subsequently, ${\bf A}$ is used to construct upper and lower bounds for the maximum eigenvalue
$\lambda_L$. 

\par We now proceed to obtain asymptotic upper and lower bounds for $d$--fold Vandermonde matrices. We first focus on the case $d=2$ and retain the notation of Section \ref{sec:rand_mat_ess}. In what follows we
prove the following Theorem.
\begin{theorem}
Let ${\bf V}^{(d)}$ be the $N^{d}\times L$ $d$--fold random Vandermonde matrix defined in (\ref{eqn_vandergendef}). Let $\lambda_L$ be the maximum eigenvalue of the matrix $({\bf V}^{(d)})^{*} ({\bf V}^{(d)})$. If $\lim_{N,L\to\infty}\frac{L}{N^{d}}=\beta \in (0,\infty)$,
there exists constants $C_e$ such that for all $C > C_e$
\begin{equation}\label{eqn_maxupperbnd}
\mathbb{P} \Big(\lambda_L \geq C\log(N^d) + u \Big) \leq e^{-u} \frac{N^d}{N^{\delta (d-1)\log(N)}}
\end{equation}
\end{theorem}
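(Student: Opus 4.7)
The plan is to extend the Gershgorin-plus-Chernoff strategy used for $d=1$ in \cite{TW} to the $d$-dimensional setting. Writing $A := (\mathbf{V}^{(d)})^*\mathbf{V}^{(d)}$ and computing directly from \eqref{eqn_vandergendef}, the entries factor as
$$A(k,m) = \prod_{j=1}^d D_N\!\left(2\pi\big(x_m^{(j)}-x_k^{(j)}\big)\right),$$
so $A(k,k) = 1$ and $|A(k,m)|\le 1$. Since $A$ is Hermitian, Gershgorin's disc theorem yields
$$\lambda_L \le 1 + \max_{1\le k\le L} S_k, \qquad S_k := \sum_{m\ne k}|A(k,m)|,$$
so it suffices to obtain an upper tail bound on $S_k$ and then take a union bound over $k$.

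Conditional on the row-$k$ phases $x_k$, the $L-1$ summands in $S_k$ are i.i.d.\ and bounded by $1$, so a Chernoff-style moment generating function argument applies. The standard Dirichlet-kernel estimates
$$\int_0^1 |D_N(2\pi y)|\,dy = O(\log N / N), \qquad \int_0^1 |D_N(2\pi y)|^j\,dy = O(1/N) \text{ for } j\ge 2,$$
combined with $\|f\|_\infty<\infty$ and the product structure of $|A(k,m)|$, give $\mathbb{E}[|A(k,m)|^j\mid x_k] = O(N^{-d})$ uniformly in $x_k$, with an extra $(\log N)^d$ factor at $j=1$. Feeding these into the Taylor expansion of the conditional MGF $\mathbb{E}[e^{\alpha|A(k,m)|}\mid x_k]$, taking the $(L-1)$-fold product by conditional independence, and using $L/N^d\to\beta$ produces an estimate of the form
$$\mathbb{E}[e^{\alpha S_k}\mid x_k] \le \exp\!\big(g(\alpha,N)\big),$$
for an explicit function $g$ depending on $\beta$, $\|f\|_\infty$ and absolute Dirichlet-kernel constants, whose leading term scales linearly in $\alpha$ with a coefficient $C_e$ absorbing all of these dependencies.

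Markov's inequality at the level $t = C\log(N^d) + u$ then gives
$$\mathbb{P}(S_k \ge t\mid x_k) \le \exp\!\big(g(\alpha,N) - \alpha t\big),$$
and tuning $\alpha$ to grow of order $\delta\log N$, so that the slack $(C-C_e)\alpha\log(N^d)$ produces a negative exponent of magnitude $\delta(d-1)\log^2 N + u$, yields the conditional tail $e^{-u}\,N^{-\delta(d-1)\log N}$. Since the bound does not depend on $x_k$, unconditioning is automatic, and the union bound over the $L\le \lceil\beta N^d\rceil$ rows absorbs an additional factor of $N^d$, giving \eqref{eqn_maxupperbnd}.

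The principal obstacle is the MGF/Markov balance: one must choose $\alpha$ large enough (of logarithmic order) for the Chernoff exponent $-\alpha t + g(\alpha,N)$ to produce the super-polynomial tail factor $N^{-\delta(d-1)\log N}$, yet small enough that the higher-order Taylor terms in the MGF expansion remain negligible compared to the leading linear-in-$\alpha$ part. This requires careful tracking of the constants in the Dirichlet-kernel integrals and an explicit identification of the threshold constant $C_e$ so that $C>C_e$ leaves the necessary slack to drive the tuning.
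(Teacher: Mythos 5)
Your high-level plan --- reduce to the equivalent Gram matrix ${\bf A}$ with $d$-fold Dirichlet-kernel products, bound $\lambda_L$ by the maximum row sum, control the row sum via a conditional MGF / Chernoff argument, and finish with Markov plus a union bound over the $O(N^d)$ rows --- is exactly the framework of the paper's proof. The Dirichlet-kernel moment estimates you invoke, $\int|D_N|\,dy = O(\log N/N)$ and $\int|D_N|^j\,dy = O(1/N)$ for $j\geq 2$, are accurate and play the role of the step-function bound \eqref{eq_char} and the cell-probability estimate $q_{a,b}\leq C_q/N^2$ in the paper; these parts are fine.

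The gap is in the Chernoff tuning. You propose taking $\alpha$ of order $\delta\log N$ and assert that the higher-order Taylor terms in the conditional MGF stay negligible. They do not. The summands $|A(k,m)|$ are bounded by $1$, and the event $|A(k,m)|\approx 1$ has conditional probability of order $N^{-d}$ (all $d$ coordinate phase differences in the first bin of width $1/N$). Hence
$$
\mathbb{E}\big[e^{\alpha|A(k,m)|}\mid x_k\big] \;\geq\; 1 + c\,N^{-d}\,e^{\alpha}
$$
for some absolute $c>0$. With $\alpha=\delta\log N$ this is $1 + cN^{\delta - d}$, and the $(L-1)\sim\beta N^d$-fold conditional product is $\gtrsim\exp(c\beta N^{\delta})$, which grows faster than any polynomial in $N$ and swamps the Markov factor $e^{-\alpha t}$ at any threshold $t=O(\log N)$. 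More generally, summing the step-function bound over bins gives $\mathbb{E}[e^{\alpha S_k}\mid x_k]\lesssim\exp\!\big(\beta(e^{\alpha}+\alpha\log N)^d\big)$, so once $\alpha\to\infty$ the $e^{\alpha}$ term dominates; there is no choice of growing $\alpha$ that makes the exponent sub-polynomial. This is precisely why the paper fixes $t=1$: the elementary inequality $e^{t/ab}\leq 1 + (e-1)\,t/(ab)$, valid because $t/(ab)\leq 1$, then yields $\mathbb{E}[e^{R}]\leq e^{C_e H_{N/2}^d}\sim e^{C_e(\log N)^d}$ with an absolute constant in front, and Markov at the (larger) threshold level $s\sim C(\log N)^d + u$ produces the super-polynomial tail. (The displayed threshold $C\log(N^d)+u$ in the statement appears to be a misprint for the $(\log N)^d$-order quantity that the proof actually uses; with the threshold as literally written, the fixed-$t$ Markov step alone does not yield the stated decay either.)

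In short: same skeleton, but your proposed $\alpha$-tuning is the one move that cannot work here, and it is the step you flag as the principal obstacle without resolving it. Keeping the MGF parameter bounded and raising the threshold to the $(\log N)^d$ scale is the way the argument actually closes.
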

where $\delta = C - C_e$.

\begin{proof}
The line of argument follows that of \cite{TW}.  We begin with the following upper bound for the Dirichlet function proved in \cite{TW},
\begin{equation}
\label{eq_char} 
\babs{\frac{\sin(\frac{N}{2}\abs{x})}{N\sin(\frac{\abs{x}}{2})}}\leq\sum_{k=1}^{N/2}{\,\frac{1}{k}\,\mathbf{1}_{\big[\frac{2\pi(k-1)}{N},\frac{2\pi k}{N}\big)}(\abs{x})}
\end{equation}
where $\mathbf{1}_{B}$ is the indicator function on the Borel set $B$. To apply the bound, let $p_{a,b}$, with $a,b\in\mathbb{Z}$, be the probability that  
$$
\theta_k - \theta_1\in [(a-1)/N,a/N]
$$
and
$$
\psi_k - \psi_1\in [(b-1)/N,b/N].
$$
Define,
$$
q_{a,b} := p_{a,b} + p_{-a,b} + p_{a,-b} + p_{-a,-b}
$$
Then, it is easy to see from the union bound that
\begin{equation}
q_{a,b}\leq \frac{4}{N^{2}} \norm{f}_\infty = \frac{C_q}{N^2}
\label{eqn_qbnd}
\end{equation}
where the function $f$ is the probability density over the unit square $[0,1]^{2}$ and $C_{q}$ a constant. Next for the magnitude of a term in the first row, corresponding to $\theta_1,\psi_1$, we find that
\begin{equation}
\mathbb{E} \big(\exp(Xt)\mid \theta_1, \psi_1\big) \leq \sum_{a=1}^{N/2} \sum_{b=1}^{N/2} q_{a,b} e^{\frac{t}{ab}} \leq 1 + (e-1) \sum_{a=1}^{N/2} \sum_{b=1}^{N/2}  q_{a,b} \frac{t}{ab}
\end{equation}
where $X$ is defined as
$$
X := \Big| D_N(2\pi(\theta_k - \theta_1)) D_N(2\pi(\psi_k - \psi_1)) \Big|
$$
for $k\neq 1$ and the upper bound does not depend on $k$ or $(\theta_1,\psi_1)$.
%and $q_{a,b} := p_{a,b} + p_{a,-b} + p_{-a,b} + p_{-a,-b}$ where the definition of the latter is clear. 
%It follows that
%$$
%q_{a,b} \leq \frac{16 \pi^2}{(2M+1)^2} \norm{f}_\infty := \frac{C_q}{(2M+1)^2}
%$$
If $R$ is any row sum of the entries of the matrix ${\bf X}$ it follows that
\begin{equation}
\mathbb{E}\big(\exp(R)\big) \leq e \Bigg( 1 + C_q \frac{(e-1)}{N^2} H^2_{N/2} \Bigg)^{L-1} \leq e^{\beta_N C_q (e-1) H^2_{N/2}}
\label{eqn_Rbnd}
\end{equation}
by taking $t=1$ and using the facts that $\lb 1 + x/y \rb^y \leq \exp(x)$ and $\beta_N \rightarrow \beta$ as $N \rightarrow \infty$, where $H_p:=\sum_{k=1}^p 1/k$.  
Then
$$
H_N = \log N + \gamma + \frac{1}{2N} + O(N^{-2})
$$
where $\gamma$ is the Euler--Mascheroni constant. It follows that 
\begin{equation}
\mathbb{E} \big( \exp(R)\big) \leq e^{C_e \lb \log N \rb^2} = N^{C_e \log N}
\end{equation}
for some suitable constant $C_e > 0$. Applying the union bound to the maximum row sum $Y$ and then, Markov's inequality with $C \geq C_e + \delta$, we observe that 
\begin{equation}
\mathbb{P}\Big(Y \geq C \log(N^{2}) + u\Big) \leq e^{-u} \frac{N^2}{N^{C\log N}} N^{C_e \log N} = e^{-u} \frac{N^2}{N^{\delta\log N}}. 
\label{eqn_unionbnd}
\end{equation}
Since the maximum eigenvalue is upper bounded by the maximum row sum of magnitudes, then $Y \geq \lambda_L$. This concludes 
the proof for the case $d=2$.

\par For the case $d > 2$, one obtains a $d$--fold product of Dirichlet functions such that the exponent of the harmonic function $H$ is $d$ instead of 2 in (\ref{eqn_Rbnd}) and also in (\ref{eqn_qbnd}). Constant terms are also suitably modified and with these changes carried through to (\ref{eqn_unionbnd}) where 2 is again replaced with $d$, the result follows as before. 
\qed \end{proof}
The following Corollary is stated without proof.
\begin{cor}
There exists a positive constant $B$ such that
$$
\mathbb{E}(\lambda_L) \leq B \log (N^d) + o(1).
$$
\end{cor}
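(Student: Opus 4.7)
The plan is to integrate the tail bound from the preceding theorem. Recall the layer-cake identity
\begin{equation}
\mathbb{E}(\lambda_L) = \int_0^\infty \mathbb{P}(\lambda_L > s)\, ds.
\end{equation}
Fix any $C > C_e$ so that the theorem supplies a strictly positive $\delta = C - C_e$, and set the threshold $s_0 := C \log(N^d)$. The idea is to split the integral at $s_0$: on $[0, s_0)$ we bound the integrand trivially by $1$, and on $[s_0, \infty)$ we apply the theorem.

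The first piece contributes at most $s_0 = C \log(N^d)$. For the second piece, the substitution $s = s_0 + u$ and the tail inequality (\ref{eqn_maxupperbnd}) give
\begin{equation}
\int_{s_0}^\infty \mathbb{P}(\lambda_L > s)\, ds = \int_0^\infty \mathbb{P}\big(\lambda_L \geq C\log(N^d) + u\big)\, du \leq \frac{N^d}{N^{\delta(d-1)\log N}} \int_0^\infty e^{-u}\, du,
\end{equation}
and the remaining integral equals $1$. Thus the tail contribution is at most $N^d / N^{\delta(d-1)\log N} = \exp\!\big(d \log N - \delta(d-1)(\log N)^2\big)$, which for $d \geq 2$ is $o(1)$ as $N \to \infty$ because the quadratic-in-$\log N$ term in the exponent dominates the linear one. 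Putting the two pieces together gives $\mathbb{E}(\lambda_L) \leq C\log(N^d) + o(1)$, so the corollary holds with $B := C$.

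There is essentially no obstacle: the work is just a clean Fubini-style integration of the exponential tail bound already established. The only minor point worth flagging is that the dissipation of the tail term depends crucially on the factor $(d-1)$ in the exponent of the denominator in (\ref{eqn_maxupperbnd}), which is why the argument as stated above covers $d \geq 2$; the $d=1$ case, treated in \cite{TW}, would require the analogous sharper tail bound from that paper rather than (\ref{eqn_maxupperbnd}) itself. One could also sharpen the constant $B$ by optimising the choice of $C$ above $C_e$, but since the corollary only asserts existence of some $B$, any admissible $C$ suffices.
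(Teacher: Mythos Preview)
Your argument is correct and is the natural route: integrate the layer-cake formula, split at the threshold $C\log(N^d)$, and use the exponential tail in (\ref{eqn_maxupperbnd}) to show the remainder is $o(1)$. The paper states this corollary without proof, so there is nothing to compare against; what you wrote is presumably exactly what the authors had in mind. Your caveat that the $(d-1)$ factor in the exponent of (\ref{eqn_maxupperbnd}) renders the stated bound vacuous when $d=1$ is accurate, and that case is indeed handled separately in \cite{TW}.
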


\subsection{Lower Bound}

The purpose of this subsection is to present the following lower bound for the maximum eigenvalue $\lambda_L$,
in which we suppose that phases are provided according to a joint continuous density $f$ bounded away from zero. 

\begin{theorem}
Let ${\bf V}^{(d)}$ be the $N^{d}\times L$ $d$--fold random Vandermonde matrix and let $\lambda_L$ be its maximum eigenvalue. If $N,L \rightarrow \infty$ such that $\lim_{L,N\to\infty}\frac{L}{N^{d}}=\beta\in (0,\infty)$,  
there exists a constant $K$ such that 
\begin{equation}
\mathbb{P}\Big( \lambda_L \geq K\frac{ \log N^d}{ \log \log N^{d}}\Big) = 1 - o(1).
\label{eqn_maxlowbnd}
\end{equation}
\end{theorem}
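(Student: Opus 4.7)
The plan is to find a cluster of phase vectors sitting inside a common small cube and to use its indicator as a test vector in the Rayleigh quotient for $\lambda_L$. As recalled in the excerpt, $\lambda_L$ equals the top eigenvalue of the $L\times L$ Hermitian matrix $\mathbf{A}$ with entries $A(k,m)=\prod_{j=1}^{d}D_N\bigl(2\pi(x_{k,j}-x_{m,j})\bigr)$, and a direct computation with $D_N(y)=\sin(Ny/2)/(N\sin(y/2))$ shows that $D_N(y)\geq 1/2$ whenever $|y|\leq 2\pi c/N$ for a sufficiently small absolute constant $c>0$. Consequently, if $S\subset\{1,\dots,L\}$ is any set of indices whose phase vectors lie in a common axis-aligned cube of side $c/N$, then $A(k,m)\geq 2^{-d}$ for every $k,m\in S$, so taking $v=\mathbf{1}_S\in\mathbb{R}^{L}$ yields
\begin{equation*}
\lambda_L\;\geq\;\frac{v^{*}\mathbf{A}v}{v^{*}v}\;=\;\frac{1}{|S|}\sum_{k,m\in S}A(k,m)\;\geq\;2^{-d}|S|.
\end{equation*}
It therefore suffices to produce, with probability $1-o(1)$, a cluster of size $|S|=\Omega(\log N^d/\log\log N^d)$.

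This cluster comes from a balls-in-bins argument. Partition $[0,1]^d$ into $K:=\lceil N/c\rceil^d$ axis-aligned cubes of side at most $c/N$. Since $f$ is continuous and bounded below by some $\alpha_0>0$, a given phase vector lands in any specified cube with probability at least $\alpha_0 c^d/N^d$, a constant multiple of $1/K$; moreover $L/K\to\beta c^d\in(0,\infty)$, so we are in the regime where the number of balls and of bins are both of order $N^d$. To reduce to the classical uniform case, decompose each placement distribution as $\alpha_0 c^d\cdot U+(1-\alpha_0 c^d)\cdot q_q$, where $U$ is uniform over the $K$ cubes and $q_q$ is a residual distribution; the maximum occupancy of the non-uniform process is then stochastically bounded below by that of a uniform balls-in-bins experiment with $\mathrm{Bin}(L,\alpha_0 c^d)=\Theta(N^d)$ balls thrown into $K$ bins. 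For uniform balls-in-bins in this regime the maximum load is $(1+o(1))\log K/\log\log K$ with probability $1-o(1)$, which translates to a cube of our partition containing at least $K'\log N^d/\log\log N^d$ phase vectors with probability $1-o(1)$, for some constant $K'>0$. Combining this with the Rayleigh-quotient display above proves the theorem with $K=2^{-d}K'$.

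The step I expect to be the main obstacle is the matching lower tail bound for the maximum occupancy. The standard route is a Paley--Zygmund argument applied to the random variable $Z$ counting cubes whose occupancy is at least $\kappa\log K/\log\log K$: a Stirling-type computation shows $\mathbb{E}(Z)\to\infty$ for $\kappa$ small, and because the occupancies of distinct cubes are negatively correlated (as in any multinomial), $\mathbb{E}(Z^{2})$ is controlled by $\mathbb{E}(Z)^{2}$, yielding $\mathbb{P}(Z\geq 1)\to 1$. Care is needed in two places: the coupling must not erode the logarithmic lower bound, which forces $\alpha_0 c^d$ to remain a fixed positive constant rather than $o(1)$; and the Stirling estimate at the scale $\kappa\log K/\log\log K$ must be calibrated precisely so that the expected count of heavily loaded cubes genuinely diverges.
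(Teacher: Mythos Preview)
Your proposal is correct and follows essentially the same approach as the paper: locate a cluster of phase vectors lying in a common small cube via a balls-in-bins occupancy argument, and then lower-bound $\lambda_L$ by a constant times the cluster size using the near-constancy of the Dirichlet kernel on that cluster. The differences are cosmetic---you invoke the Rayleigh quotient directly whereas the paper routes through the Cauchy interlacing theorem for the principal submatrix $\mathbf{A}_\Gamma$, and you are more explicit about scaling the cube side as $c/N$ and about the coupling to the uniform allocation, where the paper simply appeals to the density lower bound and cites the Raab--Steger occupancy result.
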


\begin{proof}
We rely on the equivalent matrix given in (\ref{eqn_Amat}), which is an $L \times L$ matrix. For notation simplicity, we specialize to the case $d=2$ since the case $d\geq 3$ follows a similar argument. To construct our lower bound by analogy with the $1$--dimensional case, we want to obtain large numbers of points $(\theta, \psi)$ that lie close together, that is points such that $\vert \theta_k - \theta_m \vert$ and $\vert \psi_k - \psi_m \vert$ are both small.  If this is the case for some large set of indexes $\Gamma$, then it follows that $A(k,m) \approx 1$ for all $k$ and $m$ in $\Gamma$. In addition, the matrix ${\bf A}$ is symmetric with diagonal values 1 and so it follows that the eigenvalues of ${\bf A}$ interlace with the eigenvalues of any principal sub--matrix (see \cite{Wilkinson}). In particular, if we define the matrix ${\bf A}_\Gamma$ as
$$
{\bf A}_\Gamma(k,m):= A(k,m)
$$
for $k,m\in \Gamma$ then $\lambda_1({\bf A}_\Gamma) \leq \lambda_1({\bf A})$. Hence, if 
$\abs{\theta_k - \theta_m}$ and $\abs{\psi_k-\psi_m} < \epsilon$ for some $\epsilon>0$ for all $k$ and $m$ then it follows that 
$$
D_N^2(2 \pi \epsilon) \times \vert \Gamma \vert \leq \lambda_1({\bf A}_\Gamma) \leq \lambda_1({\bf A}).
$$
Divide the unit square $[0,1] \times [0,1]$ into $N^2_\epsilon$ equal squares with sides of length $\epsilon$. Take $\Gamma$ to be the indexes corresponding to the square with the most number of points in it. By hypothesis, the joint measure has a continuous density $f$ bounded away from 0 throughout $[0,1]^2$. Therefore, it follows that 
$$ 
\eta:=\min \,\{f(\theta,\psi)\,:\,(\theta,\psi)\in [0,1]^2\}>0.
$$
By construction, we select $L$ phase points $(\theta,\psi) \in [0,1]\times[0,1]$ so that each
square receives at least $\epsilon^2 L N^{-2} \eta$ points on the average, which is $O(1)$ since $L/N^2 \rightarrow \beta$ as $L,N \rightarrow \infty$. This is an occupancy model and we are interested in the square with the maximum number of points. The number of such points is at least
\begin{eqnarray*}
k(N^2) & = & \alpha \frac{ \log N_\epsilon^2}{ \log \log N_\epsilon^2} \\
       & \geq & \alpha (1 - o(1))\frac{\log N}{\log \log N}
\end{eqnarray*}
with probability $1- o(1)$ for any $\alpha \in (0,1)$ independently of the mean number of points
per square, (see for instance (see e.g. \cite{Raab}). However since $\epsilon > 0$ and $\alpha \in (0,1)$ are both arbitrary, the lower bound on $\lambda_1({\bf A}_\Gamma)$ and hence on $\lambda_1({\bf A})$ follows with $K$ any constant in $(0,1)$. The proof is complete.
\qed 
\end{proof}

\section{Minimum Eigenvalue}\label{sec_mineig}

In this Section we focus on the behavior of the minimum eigenvalue $\lambda_{1}$ for the case $d=1$. Consider the matrix ${\bf A}$ as in  (\ref{eqn_Amat}) and all its $2\times 2$ principal sub--minors. These matrices are symmetric and the minimum eigenvalue
for the sub--minor determined by phases $\theta_k$ and $\theta_\ell$ is denoted $\lambda_{k,\ell}$. In other words, $\lambda_{k,\ell}$ is the smaller root of the equation
$$
\lb 1 - \lambda_{k,\ell} \rb^2 = D_N(2\pi(\theta_k - \theta_\ell))^2.
$$
Taking square roots and applying again the interlacing Theorem (see \cite{Wilkinson} for a reference) we obtain, 
$$
\lambda_1(N) \leq \min_{k,\ell} \Big( 1 - D_N(2\pi(\theta_k-\theta_\ell)) \Big).
$$
Let $\omega$ and $\alpha$ be defined as $\omega := N^2 \min_{k,\ell} \vert \theta_k - \theta_\ell \vert$ and
$$
\alpha := \min_{k,\ell}\Big(1 -D_N(2\pi(\theta_k-\theta_\ell))\Big).
$$
From a result of de Finetti (see \cite{Feller_Vol2} for a reference), 
$$ 
\mathbb{P} \Big(\min_{k,\ell} \vert \theta_k - \theta_\ell \vert > \delta\, \Big) = \lb 1 - N \delta \rb_+^{N-1}
$$ 
where $ \lb x \rb_+ := \max \lc x , 0 \rc$. Substituting $\eta/N^2$ for $\delta$ and taking the limit as $N\to\infty$, we obtain that
$$
\lim_{N\to\infty} \mathbb{P} \Big( \min_{k,\ell} \vert \theta_k - \theta_\ell \vert > \frac{\eta}{N^2} \,\Big)=e^{-\eta}.
$$
On taking the Taylor expansion of $D_N$ to second order around the origin we obtain
$$
\min_{k,\ell} \Big(1 - D_N(2\pi(\theta_k-\theta_\ell))\Big) = \frac{\pi^2\omega^2}{6 N^2} + o(N^{-2}).
$$
Therefore, the following limit holds,
$$ 
\lim_{N\to\infty}\mathbb{P}\Big( \alpha \leq \frac{\pi^2\omega^2}{6 N^2} \Big) = 1 - e^{-\omega}.
$$
Then, it follows that $\lambda_1(N) \leq O(N^{-2})$ as $N \rightarrow \infty$. As we now show, the approach to zero is much 
more rapid. To obtain better estimates for $\lambda_1(N)$ as $N \rightarrow \infty$, we now consider the maximum eigenvalue of the inverse matrix. 

Given an $N\times N$ matrix ${\bf M}$, we have the following matrix norms $\spnorm{{\bf M}}:= \sup_{j} \big( \sum_{k=1}^N \vert M_{k,j} \vert \big)$ and $\opnorm{{\bf M}}  :=  \sqrt{\lambda_N({\bf M}^* {\bf M})}$. The following inequality is well known (see \cite{HornJohnson} for more details),
$$
\frac{1}{\sqrt{N}} \spnorm{{\bf M}} \leq \opnorm{{\bf M}} \leq \spnorm{{\bf M}}. 
$$
We now prove the following Lemma which is used later.
\begin{lemma}\label{lemma_poly}
Let $A(z) = a_0 z^n + a_1 z^{n-1} + \ldots + a_n$ be a complex polynomial and
let $A^* := \max_{\lvert z \rvert = 1} \lvert A(z) \rvert$ be its maximum on the unit circle. Then

\begin{equation}
 \frac{\lvert a_n \rvert + \ldots + \lvert a_0 \rvert}{n+1} \leq A^* \leq  \lvert a_n \rvert + \ldots + \lvert a_0 \rvert.
\label{eqn_sstar}
\end{equation}

\end{lemma}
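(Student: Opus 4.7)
The upper bound is immediate from the triangle inequality: for any $z$ with $|z|=1$ one has $|A(z)| \le \sum_{k=0}^{n} |a_k| |z|^{n-k} = \sum_{k=0}^{n} |a_k|$, so $A^* \le |a_0|+\cdots+|a_n|$.

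For the lower bound, the plan is to extract each coefficient as an integral of $A$ against a character on the unit circle and then estimate this integral by $A^*$. Specifically, Cauchy's integral formula (equivalently, the orthogonality of the exponentials $\{e^{ik\theta}\}_{k\in\mathbb{Z}}$) yields, for every $k \in \{0,1,\ldots,n\}$,
$$
a_{n-k} \;=\; \frac{1}{2\pi}\int_{0}^{2\pi} A(e^{i\theta})\, e^{-ik\theta}\, d\theta.
$$
Taking absolute values inside the integral gives
$$
|a_{n-k}| \;\le\; \frac{1}{2\pi}\int_{0}^{2\pi} |A(e^{i\theta})|\, d\theta \;\le\; A^*,
$$
since by definition $|A(e^{i\theta})| \le A^*$ for all $\theta$.

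Summing this uniform bound over $k=0,1,\ldots,n$ produces
$$
\sum_{k=0}^{n} |a_k| \;\le\; (n+1)\, A^*,
$$
which upon dividing by $n+1$ yields the desired lower bound in (\ref{eqn_sstar}). There is no real obstacle here: the argument reduces entirely to the standard Fourier/Cauchy coefficient extraction, and the constant $n+1$ arises simply from counting the $n+1$ coefficients. The only care needed is to confirm that the contour integral formula applies uniformly to all coefficients of a polynomial of degree $n$, which is a direct consequence of the orthogonality relation $\frac{1}{2\pi}\int_0^{2\pi} e^{ij\theta}d\theta = \delta_{j,0}$ applied to $A(e^{i\theta})e^{-ik\theta} = \sum_{j=0}^n a_j e^{i(n-j-k)\theta}$.
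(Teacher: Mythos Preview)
Your proof is correct and follows essentially the same approach as the paper: the upper bound via the triangle inequality, and the lower bound by extracting each coefficient as an integral over the unit circle (the paper phrases this via Cauchy's integral formula $a_k = \frac{1}{2\pi i}\int_{|z|=1} A(z)z^{-(n-k+1)}\,dz$, which is the same as your Fourier orthogonality computation), bounding each $|a_k|$ by $A^*$, and summing.
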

\begin{proof}
The second inequality follows immediately from the triangle inequality, so we concentrate on the first one.
It is enough to show that
\begin{equation}
\lvert a_k \rvert \leq A^*
\label{eqn_aineq}
\end{equation}
for all $k$. By applying Cauchy's integral Theorem and using the fact that
$$
\int_{\lvert z \rvert = 1}{z^{-i}\,dz} = 0
$$
for all $i\neq 1$, we obtain that,
\begin{equation}
a_k = \frac{1}{2\pi i} \int_{\lvert z \rvert = 1} \frac{A(z)}{z^{n-k+1}} dz
\end{equation}
for all $k$. Therefore,
\begin{equation}
\lvert a_k \rvert \leq \frac{1}{2\pi} \int_{0}^{2\pi} \lvert A(z(\theta)) \rvert d\theta \leq A^* \nonumber 
\label{eqn_polyineq}
\end{equation}
where the first inequality follows by upper bounding the integral, taken as a line integral with respect to $\theta$ around the unit circle, and the second inequality follows from the definition of $A^*$. By applying the inequality for each $k$ we obtain the required lower bound.
\qed \end{proof}

In the following steps we find a bound on $\lambda_1(N)$ in terms of the maximum of a 
polynomial with roots on the unit circle. We begin with some definitions. Let 
$z_k = e^{2\pi i \theta_k}$ be the values determining the random Vandermonde matrix as in  (\ref{eqn_Vandermondedefn}). Let $P(z)$ be the polynomial defined as
$$
P(z) := \prod_{k=1}^N \lb z - z_k \rb.
$$
We further denote 
$$
P_p(z) := \frac{P(z)}{|z - z_p|}.
$$
Let ${\bf M}={\bf V}^{-1}$ be the inverse of the random Vandermonde matrix and let $M(p,q)$ denote its entries. Define
\begin{equation}
\beta_p := \sum_{q=1}^{N}{|M(p,q)|}.
\end{equation}
By Theorem \ref{GEthm}, we know that
\begin{equation}
\label{eqn_dstar}
\beta_p := \frac{\sqrt{N}}{\prod_{q\neq p}{|z_p-z_q|}} \Big(|\sigma_0^p| + \ldots + |\sigma^p_{N-1}| \Big).
\end{equation}
In addition, let
\begin{equation}
%| T_p (z)|:= \frac{|P(z)|}{|P(z_p)|} = \prod_{q\neq p}{\frac{| z - z_q |}{|z_p - z_q|}}.
T_p (z):= \frac{P(z)}{|P(z_p)|} = \prod_{q\neq p}{\frac{\lb z - z_q \rb }{|z_p - z_q|}}.
\label{eqn_Tpdefn}
\end{equation}
It follows from (\ref{eqn_dstar}) and (\ref{eqn_sstar}) that
$$
\frac{\beta_p}{N} \leq \max_{|z|=1} \Big( \sqrt{N} |T_p(z)| \Big) \leq \beta_p
$$
for all $p=1,\ldots,N$. The following Lemma is a direct consequence of the Hadamard's inequality (see \cite{HornJohnson}).

\begin{lemma}
\label{lem_hadbnd}
Let $z_1,\ldots,z_N$ be distinct points on the unit complex circle. Then
there exists $p_0 \in \lc 1, \ldots, N \rc$ such that,
\begin{equation}
\prod_{q\neq p_0} \lvert z_{p_0} - z_q \rvert \leq N.
\end{equation}
\end{lemma}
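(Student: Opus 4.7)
The plan is to exploit the classical identity between the Vandermonde determinant and the pairwise distance product $\prod_{p \neq q}|z_p - z_q|$, and then pigeonhole on the factors $a_p := \prod_{q \neq p}|z_p - z_q|$. The goal reduces to showing $\prod_{p=1}^N a_p \leq N^N$, after which at least one of the $a_p$'s must be bounded by $N$ by an averaging argument.

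First I would set up the (unnormalized) Vandermonde matrix $W$ with entries $W(p,q) = z_q^{p-1}$, whose determinant satisfies $|\det W| = \prod_{p<q} |z_p - z_q|$. Squaring this and regrouping pairs gives
$$
|\det W|^2 \;=\; \prod_{p < q} |z_p - z_q|^2 \;=\; \prod_{p \neq q} |z_p - z_q| \;=\; \prod_{p=1}^N a_p.
$$
Next, I would apply Hadamard's inequality to $W$: since each column of $W$ equals $(1, z_q, z_q^2, \ldots, z_q^{N-1})^\top$ and $|z_q|=1$, every column has Euclidean norm exactly $\sqrt{N}$. Hadamard thus yields $|\det W| \leq N^{N/2}$, hence $\prod_{p=1}^N a_p \leq N^N$.

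Finally, a pigeonhole (or geometric-mean) step finishes the argument: if every $a_p$ were strictly greater than $N$, the product would exceed $N^N$, contradicting the Hadamard bound. Therefore there exists some $p_0 \in \{1,\ldots,N\}$ with $a_{p_0} = \prod_{q \neq p_0} |z_{p_0} - z_q| \leq N$, as required.

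There is no real obstacle here; the only mild subtlety is to be careful that the Vandermonde matrix used in the Hadamard step is the unnormalized one (without the $1/\sqrt{N}$ factor from \eqref{eqn_Vandermondedefn}), so that the column-norm bound gives exactly $N^{N/2}$ and matches the right-hand side $N$ in the statement of the lemma.
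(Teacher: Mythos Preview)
Your proof is correct and essentially identical to the paper's: both express $\prod_p a_p$ (or its logarithm) via the Vandermonde determinant, bound it by $N^{N/2}$ using Hadamard's inequality on the unnormalized matrix with unit-modulus entries, and then pigeonhole to find an index $p_0$ with $a_{p_0}\leq N$. The only cosmetic difference is that the paper phrases the final step as a proof by contradiction using logarithms, while you work directly with the product.
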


\begin{proof}
Assume this is not true. Therefore, for every $p$
\begin{equation}
\prod_{q\neq p} \lvert z_{p} - z_q \rvert > N.
\end{equation}
Hence, $\sum_{q\neq p}{\log|z_p-z_q|}> \log N$ for every $p$. Let ${\bf G}$ be the Vandermonde matrix whose entries are $G(i,j)=z_{j}^{i-1}$. Then $|\det({\bf G})|=\prod_{1\leq p <q \leq N}{|z_p-z_q|}$ and
$$
\log|\det({\bf G})|=\sum_{1\leq p<q\leq N}{\log|z_p-z_q|}=\frac{1}{2}\sum_{p=1}^{N}\sum_{q\neq p}{\log|z_p-z_q|}>\frac{N\log N}{2}. 
$$
Thus, $|\det({\bf G})|> N^{N/2}$ which violates Hadamard's inequality.
\qed \end{proof}

We are now in a position to prove the following Lemma, which provides upper and lower bounds on the
minimum eigenvalue of a Vandermonde matrix in terms of the polynomial $T_p$ defined in (\ref{eqn_Tpdefn}).

\begin{lemma}\label{lem_polyineq}
Let $\lambda_1(N)$ be the minimum eigenvalue of the random matrix ${\bf V}^*{\bf V}$ and $T_p(z)$ be as defined above. Then,
\begin{equation}
\frac{1}{N^3 \max_p \lc \max_{\vert z \vert = 1} \vert T_p(z) \vert^2 \rc}\leq  \lambda_1(N)
\leq \frac{1}{\max_p \lc \max_{\vert z \vert =1} \lvert T_p(z) \rvert^2 \rc}.
\end{equation}
Moreover,
\begin{equation}
\lambda_1(N) \leq \frac{N^2}{\max_{\lvert z \rvert = 1} \prod_{q\neq p_0} \lvert z - z_q \rvert^2} \leq \frac{4N^2}{\max_{\lvert z \rvert = 1} \prod_{q} \lvert z - z_q \rvert^2}.
\label{eqn_upperbnd}
\end{equation}
\end{lemma}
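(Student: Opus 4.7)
I would use the identity $\lambda_1(N) = 1/\|M\|_2^2$ with $M := V^{-1}$, as given by the Remark after Theorem~\ref{GEthm} (namely $\nu_N = 1/\lambda_1(N)$ and $\nu_N = \lambda_N(M^*M) = \|M\|_2^2$), so that the lemma reduces to bounds on the spectral norm of $M$. The dictionary between matrix entries and polynomial maxima is the two-sided inequality $\sqrt{N}\,S_p \le \beta_p \le N^{3/2}\,S_p$ established just above the lemma, where $\beta_p := \sum_q|M(p,q)|$ and $S_p := \max_{|z|=1}|T_p(z)|$. It then remains to bridge $\|M\|_2$ and the row sums $\beta_p$ in both directions.

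For the upper bound $\lambda_1(N) \le 1/\max_p S_p^2$, a single Cauchy--Schwarz applied to the $p$-th row of $M$ gives $\beta_p^2 \le N\sum_q|M(p,q)|^2 = N\|e_p^{\top}M\|_2^2 \le N\|M\|_2^2$, so $\|M\|_2 \ge \max_p \beta_p/\sqrt N \ge \max_p S_p$. For the lower bound $\lambda_1(N) \ge 1/(N^3\max_p S_p^2)$ I would upper-bound $\|M\|_2$ by $\|M\|_F$ and apply Parseval on the unit circle to the degree-$(N-1)$ polynomial $T_p$: the sum of squared coefficients of $T_p$ equals $\tfrac{1}{2\pi}\int_0^{2\pi}|T_p(e^{i\theta})|^2\,d\theta \le S_p^2$, and translating this back through the explicit formula for $M$ in Theorem~\ref{GEthm} yields $\sum_q|M(p,q)|^2 \le N\,S_p^2$. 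Summing in $p$ gives $\|M\|_F^2 \le N^2\max_p S_p^2$, hence $\|M\|_2 \le N\max_p S_p$, which actually implies $\lambda_1(N) \ge 1/(N^2\max_p S_p^2)$, a stronger bound than required.

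For the remaining inequalities I would invoke Lemma~\ref{lem_hadbnd} to select $p_0$ with $\prod_{q\neq p_0}|z_{p_0}-z_q| \le N$. By the definition of $T_{p_0}$ this forces $S_{p_0} \ge \max_{|z|=1}\prod_{q\neq p_0}|z-z_q|/N$, and combining with the upper bound $\lambda_1(N) \le 1/S_{p_0}^2$ from the first half produces the advertised $N^2$ estimate. The final factor-$4$ bound follows from $|z-z_{p_0}| \le 2$ on the unit circle: this yields $\prod_q|z-z_q| \le 2\prod_{q\neq p_0}|z-z_q|$, which squares into the factor $4$ after taking maxima.

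The main obstacle I expect is pinning down the correct exponent in the lower bound on $\lambda_1(N)$. Feeding the naive inequality $\beta_p \le N^{3/2}S_p$ directly into $\|M\|_F \le \sqrt N\max_p\beta_p$ only yields $\lambda_1(N) \ge 1/(N^4\max_p S_p^2)$, which is weaker than the advertised $N^3$. The Parseval detour on $T_p$ is the cheapest route to the improvement, because the $\ell^2$-norm of the coefficients of $T_p$ is genuinely controlled by $\max_{|z|=1}|T_p|$ rather than by their $\ell^1$-norm, and this is where the care in the write-up will be concentrated.
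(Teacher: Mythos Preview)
Your argument is correct, and for the second displayed inequality (the one involving $p_0$) it coincides with the paper's proof. For the first double inequality, however, you take a genuinely different route. The paper works entirely through the max-row-sum norm $\spnorm{{\bf M}}=\max_p\beta_p$ and the norm equivalence $\frac{1}{\sqrt N}\spnorm{{\bf M}}\le\opnorm{{\bf M}}\le\spnorm{{\bf M}}$; combining this with the pre-lemma inequality $\beta_p/N\le\sqrt N\,S_p\le\beta_p$ yields the stated bounds directly, without ever looking at the Frobenius norm or at Parseval. Your approach instead bounds $\opnorm{{\bf M}}$ below via a single row (Cauchy--Schwarz on $\beta_p$) and above via $\lVert{\bf M}\rVert_F$, using Parseval on the polynomial $T_p$ to identify $\sum_q|M(p,q)|^2$ with $\tfrac{N}{2\pi}\int_0^{2\pi}|T_p(e^{i\theta})|^2\,d\theta$. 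The payoff is real: your lower bound $\lambda_1(N)\ge 1/(N^2\max_p S_p^2)$ saves a factor of $N$ over the paper's $1/(N^3\max_p S_p^2)$, precisely because Parseval controls the $\ell^2$-norm of the coefficients of $T_p$ rather than their $\ell^1$-norm. The paper's argument is slightly shorter since it only needs the off-the-shelf equivalence of matrix norms, but yours is sharper and no harder once the Parseval identity is recognized.
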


\begin{proof}
Since $\spnorm{{\bf M}} = \max \lc \beta_p : p=1,\ldots,N \rc$, it follows that
$$
\frac{\spnorm{{\bf M}}}{N} \leq \sqrt{N} \max_{p} \lc \max_{|z|= 1} \,|T_p(z)|\rc \leq \spnorm{{\bf M}}.
$$
On the other hand,
$$
\frac{1}{\spnorm{{\bf M}}^2} \leq \lambda_1(N) \leq \frac{N}{\spnorm{{\bf M}}^2}
$$
from which we can deduce that
\begin{equation}
\frac{1}{N^3 \max_p \lc \max_{|z|= 1} |T_p(z)|^2 \rc} \leq \lambda_1(N) \leq\frac{1}{\max_p \lc \max_{\vert z \vert =1} \lvert T_p(z) \rvert^2 \rc}.
\end{equation}
Using Lemma \ref{lem_hadbnd}, we know that there exists $p_0$ such that $\prod_{q \neq p_0} \lvert z_{p_0} - z_q \rvert \leq N$. We thus obtain that
\begin{equation}
\max_{\lvert z \rvert = 1} \,|T_{p_0}(z)| \geq \max_{|z|=1} \lb \frac{1}{N} \prod_{q \neq p_0} \lvert z - z_q \rvert \rb.
\label{eqn_Tineq}
\end{equation}
Therefore, using the fact that
$$
\max_{p} \max_{|z|=1} \,|T_{p}(z)| \geq \max_{\lvert z \rvert = 1} \,|T_{p_0}(z)|
$$
and Lemma \ref{lem_polyineq}, we see that
\begin{equation}
\lambda_1(N) \leq \frac{N^2}{\max_{\lvert z \rvert = 1} \prod_{q\neq p_0} \lvert z - z_q \rvert^2} \leq \frac{4N^2}{\max_{\lvert z \rvert = 1} \prod_{q} \lvert z - z_q \rvert^2}
\end{equation}
where the last inequality follows from the fact that $|z-z_{p_0}|\leq 2$.
\qed \end{proof}

\subsection{Stochastic Construction}

Before stating our upper bound for the minimum eigenvalue, we introduce the following definitions.
First, we define a random sequence via a realization of the Brownian bridge $\Wc$ on $[0,2\pi]$, which satisfies
$\Wc(0) = \Wc(2\pi) = 0$ (see \cite{Billingsley} for details on the Brownian bridge construction). A shift $\varphi$ of the Brownian bridge is defined by

\begin{equation*}
\Wc_\varphi(\theta) :=
\begin{cases}
\Wc(\varphi+\theta) - \Wc(\varphi), & \text{if \,\,\,} 0\leq \theta\leq 2\pi -\varphi,\\
\Wc(\varphi+\theta-2\pi) - \Wc(\varphi), & \text{if \,\,\,} 2\pi - \varphi \leq \theta < 2 \pi. 
\end{cases}
\end{equation*}

Further, define the infinite sequence $\Phi:=\{\varphi_r\}_{r\geq 0}$ to be the sequence of dyadic phases on $[0,2\pi]$. Given a realization of the Brownian bridge, define the following function,
$$
I_\varphi :=  \int_0^{2\pi} \Wc_\varphi(\theta) \frac{\sin \theta}{1 - \cos \theta} d \theta
$$
for $\varphi\in[0,2\pi]$. Note that it is not clear that the above integral is well defined, since it may not exist as the fraction
$ \frac{\sin \theta}{1 - \cos \theta}$ behaves like $\theta^{-1}$ near $0$ and $2 \pi$. We address this
matter shortly. In Figure \ref{fig_brownrlse}, we show a realization of the Brownian bridge and a shifted version with $\varphi=3\pi/2$. In Figure \ref{fig_integ}, we show 
$I_\varphi$ for the same realization.

\begin{figure}[!Ht]
  \begin{center}
    \includegraphics[width=10cm]{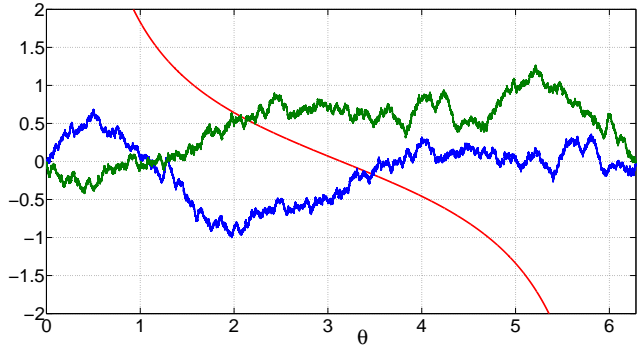}
    \caption{Realization of the Brownian bridge and a shifted version. In red we have the plot of the function $\frac{\sin\theta}{1-\cos\theta}$, in blue $\Wc$ and in green $\Wc_{\varphi}$ with $\varphi=\frac{3\pi}{2}$.}
    \label{fig_brownrlse}
  \end{center}
\end{figure}

\begin{figure}[!Ht]
  \begin{center}
    \includegraphics[width=10cm]{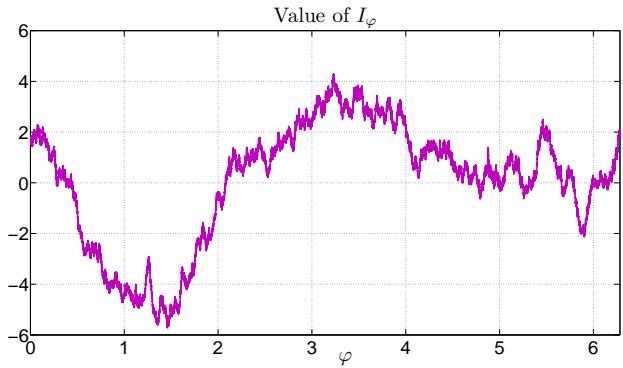}
    \caption{$I_\varphi$ for the previous Brownian bridge realization.}
    \label{fig_integ}
  \end{center}
\end{figure}

Using the sequence $\Phi$ and the same realization of $\Wc$ we construct a sequence of random variables ${\bf I}=\{I_{r}\}_{r\geq 0}$ as $I_r :=  I_{\varphi_r}$. We show that $I_\varphi$ is continuous on the interval $[0,2\pi]$, and so there exists a value $\varphi^*$ that determines the maximum value of  $I_\varphi$, which we denote as $I^*$. Since $\Phi$ is dense on the unit circle, it 
follows that  
\begin{equation}\label{eqqmax}
I^{*} := \sup \lc  I_r : r \in \NN \rc
\end{equation}
and its distribution is determined via the infinite sequence $I_r$. We now show that the above random function is well defined and that integrals are a.s. finite. 
\begin{lemma}
Given a realization of the Brownian bridge $\Wc$, then a.s. the following integrals exist for all $\varphi \in [0, 2\pi)$
$$
|I_\varphi|=\Bigg|\int_0^{2\pi} \Wc_\varphi \frac{\sin \psi}{1 - \cos \psi} d \psi \Bigg| < \infty.
$$
In addition, the function $\varphi\mapsto I_\varphi$ is continuous. 
\end{lemma}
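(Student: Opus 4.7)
The plan is to reduce both claims to a single uniform-in-$\varphi$ bound on $|\Wc_\varphi(\psi)|$ that makes the product with the kernel $\sin\psi/(1-\cos\psi) = \cot(\psi/2)$ integrable. Since $|\cot(\psi/2)|$ behaves like $2/\psi$ near $\psi=0$ and like $2/(2\pi-\psi)$ near $\psi=2\pi$, the task reduces to controlling $\Wc_\varphi$ at these two endpoints uniformly in $\varphi$. The main analytic input is L\'evy's modulus of continuity for the Brownian bridge: on an event of full probability there is a path-dependent constant $C>0$ such that $|\Wc(t)-\Wc(s)| \leq C\sqrt{|t-s|\log(1/|t-s|)}$ for all $s,t\in[0,2\pi]$ with $|t-s|$ sufficiently small. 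We work on this full-probability event for the rest of the argument.

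The first step is to establish, with $h := \min(\psi, 2\pi - \psi)$, the uniform bound
\[
|\Wc_\varphi(\psi)| \;\leq\; 2C\sqrt{h\log(1/h)}, \qquad \varphi \in [0,2\pi),
\]
for all small $h$. When the shift is unwrapped and $\psi$ is small, the bound is immediate since $\Wc_\varphi(\psi)=\Wc(\varphi+\psi)-\Wc(\varphi)$ involves two arguments at distance $\psi=h$, so L\'evy's modulus applies directly. The delicate situation is the wrapped branch near $\psi=0$, where $\Wc_\varphi(\psi)=\Wc(\varphi+\psi-2\pi)-\Wc(\varphi)$ and the two arguments lie near opposite endpoints of $[0,2\pi]$, so the direct modulus would yield a useless bound. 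Here the bridge conditions $\Wc(0)=\Wc(2\pi)=0$ are essential: writing
\[
\Wc_\varphi(\psi) = [\Wc(\varphi+\psi-2\pi)-\Wc(0)] - [\Wc(\varphi)-\Wc(2\pi)],
\]
and using the case hypothesis $\varphi>2\pi-\psi$, one finds $\varphi+\psi-2\pi<\psi$ and $2\pi-\varphi<\psi$, so each difference is bounded by $\omega_{\Wc}(\psi)\leq C\sqrt{\psi\log(1/\psi)}$. The mirror situation at $\psi=2\pi$ is handled symmetrically (there the unwrapped branch is the delicate one, requiring the same endpoint identities). Combining with $|\cot(\psi/2)|\leq 2/h$ yields a dominator behaving like $\sqrt{\log(1/\psi)/\psi}$ near $\psi=0$ and its mirror near $\psi=2\pi$, which is integrable on $(0,2\pi)$. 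This establishes the finiteness of $|I_\varphi|$ uniformly in $\varphi$.

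For the continuity of $\varphi\mapsto I_\varphi$, fix $\varphi_0$ and let $\varphi\to\varphi_0$. For every $\psi\in[0,2\pi)\setminus\{2\pi-\varphi_0\}$ (the exceptional point being Lebesgue-null), the definition of $\Wc_\varphi$ uses the same branch as $\Wc_{\varphi_0}$ once $\varphi$ is close enough to $\varphi_0$, and sample-path continuity of $\Wc$ gives $\Wc_\varphi(\psi)\to\Wc_{\varphi_0}(\psi)$. Since the uniform dominator from the previous step controls $|\Wc_\varphi(\psi)\sin\psi/(1-\cos\psi)|$ for every $\varphi$, dominated convergence delivers $I_\varphi\to I_{\varphi_0}$. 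The principal obstacle throughout is the uniform-in-$\varphi$ bound near $\psi=0$ and $\psi=2\pi$ in the ``crossed'' branch; this is exactly what the bridge endpoint conditions make possible, and without them no integrable dominator in neighborhoods of those singularities would be available.
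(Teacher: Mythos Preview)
Your proof is correct and rests on the same key tool as the paper's, L\'evy's global modulus of continuity, but the organization is different. The paper first treats $\varphi=0$, splitting the integral into a middle piece on $[\delta_2,2\pi-\delta_2]$ and two tails controlled via the modulus, then asserts the bound carries over to all $\varphi$; for continuity it does a direct $\epsilon$--$\delta$ estimate, decomposing $|I_\varphi-I_{\tilde\varphi}|$ into a middle piece bounded by $\sup_\psi|\Wc_\varphi(\psi)-\Wc_{\tilde\varphi}(\psi)|$ times $\int_\delta^{2\pi-\delta}|\cot(\psi/2)|\,d\psi$ plus four tail pieces of size $O(\delta^{1/3})$. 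You instead build a single $\varphi$-uniform integrable envelope for $\Wc_\varphi(\psi)\cot(\psi/2)$ and then get continuity for free from dominated convergence and pointwise convergence of $\Wc_\varphi(\psi)$. Your explicit handling of the wrapped branch---writing the cross-endpoint increment as a sum of two short increments via $\Wc(0)=\Wc(2\pi)=0$---fills in a step the paper leaves implicit when it says ``the Levy modulus applies to the entire sample path.'' The trade-off: your packaging gives a cleaner continuity proof, while the paper's splitting yields an explicit modulus of continuity for $\varphi\mapsto I_\varphi$, of order $a(\delta)\log(1/\delta)$.
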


\begin{proof}
When $\varphi=0$ we write the above integral as $I$. The Levy global modulus of continuity tells us that standard Brownian motion $B$ on $[0,2\pi)$ satisfies almost surely 
$$
 \lim_{\delta\to 0} \limsup_{0 \leq t \leq 2 \pi - \delta} \frac{\abs{B(t+\delta) - B(t)}}{w(\delta)} = 1
$$
where $w(\delta) = \sqrt{2 \delta \log \frac{1}{\delta}}$ (see \cite{RogersWilliams} for a proof of this fact). Since $\Wc$ is by definition,
$$
\Wc(\psi) = B(\psi) - \frac{\psi}{2 \pi} B(2 \pi)
$$
our argument is the same no matter which value of $\varphi$ is chosen because the Levy modulus applies to the entire sample path. We therefore set $\varphi = 0$. By definition of the Levy modulus,
almost surely there exists $\delta_2 > 0$ such that
$$
\frac{\abs{B(t+\delta) - B(t)}}{w(\delta)} \leq 2
$$
for all $0<\delta\leq\delta_2$.
Therefore,
\begin{equation}
a(\delta):=\abs{\Wc(\psi+\delta) - \Wc(\psi)} \leq 2 w(\delta) + \frac{|B(2\pi)|}{2\pi} \delta.
\end{equation}
We may therefore split the integral as,
\begin{eqnarray*}
I & = & \int_{\delta_2}^{2\pi- \delta_2} \Wc(\psi) \frac{\sin \psi}{1 - \cos \psi} d \psi + \int_0^{\delta_2} \Wc(\psi)\frac{\sin \psi}{1 - \cos \psi} d \psi \\
& + & \int_{2 \pi - \delta_2}^{2\pi} \Wc(\psi) \frac{\sin \psi}{1 - \cos \psi} d \psi.
\end{eqnarray*}
The first integral is finite, being the integral of a continuous function over the interval $[\delta_2, 2\pi - \delta_2]$. We may further suppose that $\delta_2$ has been chosen such that 
$\abs{\psi \frac{\sin \psi}{1 - \cos \psi}} \leq 4$ for $0 < \psi < \delta_2$ with a corresponding inequality in a similar neighbourhood of $2 \pi$. By choice of $\delta_2$, we obtain that  
$$
\Big| \int_0^{\delta_2} \Wc(\psi) \frac{\sin \psi}{1 - \cos \psi} d \psi \Big|
\leq  8 \int_0^{\delta_2} \frac{a(\psi)}{\psi} d \psi < O(\delta_2^{1/3})
$$ 
for sufficiently small $\delta_2$. The same argument applies to the last integral. Since $w(\delta_2)$ gives
a uniform bound the result holds for all $\varphi \in [0, 2 \pi)$. Continuity in $\varphi$ follows by a similar argument, 
\begin{eqnarray}
\vert I_\varphi - I_{\tdvarphi} \vert & \leq & \Big| \int_\delta^{2\pi- \delta} \lb \Wc_\varphi - \Wc_{\tdvarphi} \rb
\frac{\sin \psi}{1 - \cos \psi} d \psi \Big| \\
& + & \int_0^{\delta} \abs{\Wc_\varphi(\psi)} \Big|\frac{\sin \psi}{1 - \cos \psi}\Big| d \psi \nonumber \\
 & + & \int_{2\pi - \delta}^{2\pi} \abs{\Wc_\varphi(\psi)} \Big|\frac{\sin \psi}{1 - \cos \psi}\Big| d\psi \nonumber \\
 & + & \int_0^{\delta} \abs{\Wc_{\tdvarphi}(\psi)} \Big|\frac{\sin \psi}{1 - \cos \psi}\Big|  d\psi \nonumber \\
 & + &  \int_{2\pi - \delta}^{2\pi} \abs{\Wc_{\tdvarphi}(\psi)} \Big|\frac{\sin \psi}{1 - \cos \psi}\Big| d\psi. \nonumber 
\end{eqnarray}
Provided that $0 < \delta < \delta_2$, the tail integrals are all $O(\delta^{1/3})$ as before. We bound the first integral by two positive integrals, to obtain
\begin{eqnarray*}
\Big|\int_\delta^{2\pi- \delta} \lb \Wc_\varphi - \Wc_{\tdvarphi} \rb \frac{\sin \psi}{1 - \cos \psi} d \psi \Big|
%& \leq & 2 \sup \,\abs{\Wc_{\varphi}(\psi) - \Wc_{\tdvarphi}(\psi)} \int_\delta^\pi \frac{\sin \psi}{\lb 1 - \cos \psi \rb} d\psi \\ 
& \leq &  2 \sup \,\abs{\Wc_\varphi(\psi) - \Wc_{\tdvarphi}(\psi)} \big[ \log(1 - \cos\psi) \big]_\delta^\pi \\
& \leq & 6\,a(\delta) \lb \log 2 - \log (1 - \cos \delta) \rb
\end{eqnarray*}
provided $\abs{\varphi - \tdvarphi} < \delta$. Finally, $a(\delta) \lb \log 2 - \log (1 - \cos \delta) \rb \to 0$ as $\delta \to 0$, which implies continuity.
\qed \end{proof}

It therefore follows that $I^*$ is well defined. Let $T_{N}(\varphi)$ be defined as 
\begin{equation}
T_{N}(\varphi) := \frac{1}{\sqrt{N}}\log|P(e^{i\varphi})|^2 =  \frac{1}{\sqrt{N}} \sum_{q=1}^N \log \Big(2(1 - \cos(\varphi-\theta_q))\Big)
\label{eqn_TNdefn}
\end{equation}
where $P(z)$ is a random polynomial on the unit circle with roots $\{e^{i\theta_{q}}\}_{q=1}^{N}$ as before. Furthermore, define the infinite sequence of random variables $T_N(\varphi_r)$ by evaluating the previous expression at the phases of $\Phi$. Note that $T_N$ cannot be defined as a random function in either $C[0,2\pi]$ or in $D[0,2\pi]$, as its discontinuities are not of the first kind. We remark that since there are only countably many $\varphi_r$ and the phases $\theta_q$ are i.i.d. and uniformly distributed, no $\varphi_r$ coincides with any $\theta_q$ almost surely so that the sum exists.  We further observe that since, 
$$
\int_0^{2 \pi}{\log \big(2(1 - \cos\psi)\big)\,d\psi} = 0
$$
and 
$$
\int_0^{2\pi}{\log^2\big(2(1 - \cos\psi)\big)\,d\psi} < \infty.
$$
Note that the sequence $T_N(\varphi_r)$ satisfies the central limit Theorem as a function of $N$ for every $r \in \ZZ$. We consider the sequence ${\bf T}_N:=\{T_N(\varphi_r)\}_{r\geq 0}$ in the sequence space ${\mathbb R}^\infty$ with metric
$$
\rho_0({\bf x}, {\bf y}) = \sum_{\ell=0}^\infty \frac{ \abs{x_\ell - y_\ell}}{1 +  \abs{x_\ell - y_\ell}} 2^{-\ell}
$$
and using the ordering stated earlier. It is well known that this forms a Polish space (\cite{Billingsley68}). We now derive one more Lemma for use later on.
\begin{lemma}
\label{lem_Dctsmap}
Let $Y$ be a function in $D[0,2\pi]$. Then $Y$ is Lebesgue measurable, and its integral exists,
\begin{equation}
\int_0^{2\pi} Y(s) ds < \infty.
\end{equation}
Furthermore, let $Y_n$ be a sequence of functions in $D[0,2\pi]$ such that 
$Y_n \rightarrow Y$ in $D$ (i.e., with respect to the Skorohod topology). Then
\begin{equation}
\int_0^{2\pi} Y_n(s) ds \rightarrow \int_0^{2\pi} Y(s)ds.
\end{equation}
\end{lemma}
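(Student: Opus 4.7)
The plan is to treat the two claims separately, leveraging the structure of c\`adl\`ag paths on a compact interval.

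First I would verify that any $Y\in D[0,2\pi]$ is bounded and Borel measurable, hence Lebesgue integrable. Boundedness follows from a Bolzano--Weierstrass argument: if $|Y(t_n)|\to\infty$, pass to a convergent subsequence $t_{n_k}\to t^\ast$ and split into the three cases $t_{n_k}>t^\ast$, $t_{n_k}<t^\ast$, $t_{n_k}=t^\ast$ (at least one holding along a further subsequence); each case yields, via right-continuity at $t^\ast$ or the existence of the left limit $Y(t^\ast-)$, a finite limit of $Y(t_{n_k})$, a contradiction. Once $Y$ is bounded, its discontinuity set is at most countable, so $Y$ is a uniform limit of step functions and therefore Borel measurable. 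This gives $\int_0^{2\pi}|Y(s)|\,ds<\infty$.

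For the convergence claim I would invoke the characterization of Skorohod convergence on a compact interval: $Y_n\to Y$ in $D[0,2\pi]$ provides continuous strictly increasing bijections $\lambda_n\colon[0,2\pi]\to[0,2\pi]$ fixing the endpoints and satisfying
\[ \epsilon_n:=\sup_{t}|\lambda_n(t)-t|\longrightarrow 0,\qquad \delta_n:=\sup_{t}|Y_n(\lambda_n(t))-Y(t)|\longrightarrow 0. \]
Two consequences follow. First, $\{Y_n\}$ is uniformly bounded for large $n$: substituting $s=\lambda_n(u)$ gives $|Y_n(s)|\le|Y(u)|+\delta_n\le\|Y\|_\infty+1$. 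Second, at every continuity point $s$ of $Y$, combining $|Y_n(s)-Y(\lambda_n^{-1}(s))|\le\delta_n$ with $\lambda_n^{-1}(s)\to s$ and continuity of $Y$ at $s$ yields $Y_n(s)\to Y(s)$. Since $Y$ has only countably many discontinuities, $Y_n\to Y$ Lebesgue-almost-everywhere on $[0,2\pi]$, and the dominated convergence theorem with the constant majorant $\|Y\|_\infty+1$ delivers $\int_0^{2\pi}Y_n(s)\,ds\to\int_0^{2\pi}Y(s)\,ds$.

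The only subtle point I anticipate is the translation between the Skorohod metric condition $d(Y_n,Y)\to 0$ and the existence of time-changes $\lambda_n$ realizing both estimates above, together with the elementary observation that $\sup_s|\lambda_n^{-1}(s)-s|\le\epsilon_n$ (obtained by setting $u=\lambda_n^{-1}(s)$ and using $|u-\lambda_n(u)|\le\epsilon_n$). Both are standard; see \cite{Billingsley68}. No separate treatment of the endpoints is needed, because $\lambda_n(0)=0$ and $\lambda_n(2\pi)=2\pi$ by construction.
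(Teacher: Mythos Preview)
Your proposal is correct and follows essentially the same approach as the paper: boundedness plus measurability of c\`adl\`ag functions for the first part, and dominated convergence via uniform boundedness together with a.e.\ pointwise convergence at continuity points of $Y$ for the second. The only difference is that the paper simply cites \cite{Billingsley68} for these facts, whereas you supply the explicit Bolzano--Weierstrass and Skorohod time-change arguments.
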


\begin{proof}
The existence of the integral follows from Lemma 1, page 110 of \cite{Billingsley68} and the subsequent discussion
which shows that functions in $D$ on a closed bounded interval are both Lebesgue measurable and bounded.
The former follows from the fact that they may be uniformly approximated by simple functions, a direct
consequence of Lemma 1 and the latter also. 

Convergence follows from the Lebesgue dominated convergence theorem.
This holds since the sequence $Y_n$ is uniformly bounded, by a constant so the sequence is dominated.
Second $Y$ is continuous a.e. with pointwise convergence holding at points of continuity, as a consequence
of convergence in $D$ see \cite{Billingsley68}.
\qed \end{proof}
We now proceed to prove the following Theorem.

\begin{theorem}\label{thm_weakconvergence}
With the topology induced by the previous metric in $\re^\infty$, we have that the sequence ${\bf T}_{N}$ converges in distribution to the sequence ${\bf I}$
\begin{equation}\label{eqn_thmfin}
{\bf T}_N \Rightarrow {\bf I}
\end{equation}
as $N\to\infty$.
\end{theorem}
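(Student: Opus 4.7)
My plan is to reduce to finite-dimensional marginals (which suffices in the product topology on $\mathbb{R}^\infty$, since $\rho_0$ generates exactly this topology and marginal tightness is automatic once each coordinate converges) and then to identify each marginal as a functional of the empirical process of the phases, to which Donsker's theorem applies.

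First, using that $\int_0^{2\pi} \log(2(1-\cos\psi))\,d\psi = 0$, I would rewrite
\begin{equation*}
T_N(\varphi) \;=\; \int_0^{2\pi} \log\bigl(2(1-\cos(\varphi-\theta))\bigr)\,dU_N(\theta),
\end{equation*}
where $U_N(\theta) := \sqrt{N}\bigl(F_N(\theta) - \theta/(2\pi)\bigr)$ is the centered empirical process of $\{\theta_q\}$, which satisfies $U_N(0) = U_N(2\pi) = 0$. Integration by parts, carried out on $[0,\varphi-\varepsilon]\cup[\varphi+\varepsilon,2\pi]$ and then passing $\varepsilon \downarrow 0$ (the contribution from the logarithmic singularity vanishes because $|U_N(\varphi+\varepsilon)-U_N(\varphi-\varepsilon)|\log(1/\varepsilon)\to 0$ almost surely for dyadic $\varphi$), followed by the shift $\psi = \theta - \varphi \pmod{2\pi}$, yields
\begin{equation*}
T_N(\varphi) \;=\; -\int_0^{2\pi} \widetilde{U}_{N,\varphi}(\psi)\,\frac{\sin\psi}{1-\cos\psi}\,d\psi,
\end{equation*}
where $\widetilde{U}_{N,\varphi}$ is defined from $U_N$ exactly as $\Wc_\varphi$ was defined from $\Wc$, and the constant $U_N(\varphi)$ produced by the wrap-around is annihilated by the principal-value cancellation of the odd-at-zero kernel.

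Next I would invoke Donsker's theorem, giving $U_N \Rightarrow \Wc$ in $D[0,2\pi]$, and view $(T_N(\varphi_{r_1}),\ldots,T_N(\varphi_{r_k}))$ as the image of $U_N$ under the vector functional $\Psi = (\Psi_{\varphi_{r_1}},\ldots,\Psi_{\varphi_{r_k}})$, with $\Psi_\varphi(X) := -\int_0^{2\pi} X_\varphi(\psi)\,\sin\psi/(1-\cos\psi)\,d\psi$. The identical integration-by-parts identity applied to $\Wc$ gives $\Psi(\Wc) = -(I_{\varphi_{r_1}},\ldots,I_{\varphi_{r_k}})$, which has the same joint law as $(I_{\varphi_{r_1}},\ldots,I_{\varphi_{r_k}})$ by the symmetry $\Wc \stackrel{d}{=} -\Wc$. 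It then remains to verify, via the continuous mapping theorem, that $\Psi$ is continuous at $\mathbb{P}$-almost every path of $\Wc$. I would do this by truncation: splitting each integral at $|\psi|,|2\pi-\psi|<\delta$ and applying Lemma \ref{lem_Dctsmap} on the bounded-kernel piece, while bounding the two singular pieces by the Lévy-modulus estimate used in the lemma preceding this theorem. Joint convergence across the $k$ coordinates is automatic, as all are built from the same realization of $U_N$.

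The main obstacle is the last continuity step: the kernel $\sin\psi/(1-\cos\psi)$ is non-integrable at $\psi=0$, and the small-$\psi$ contribution must be controlled \emph{uniformly in $N$} despite the fact that $U_N$ is only piecewise constant with jumps of size $1/\sqrt{N}$ at each $\theta_q$. The cleanest way around this is to invoke the Skorohod representation theorem to realize $U_N\to\Wc$ uniformly almost surely, after which the $O(\delta^{1/3})$ truncation bound from the preceding lemma transfers to $\widetilde U_{N,\varphi}$ by uniform convergence; an alternative is to supplement the Lévy-modulus control on $\Wc$ with a matching Dvoretzky--Kiefer--Wolfowitz or Chibisov modulus-of-continuity estimate on $U_N$. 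Either route yields convergence of every finite-dimensional marginal and hence the desired product-topology convergence ${\bf T}_N \Rightarrow {\bf I}$.
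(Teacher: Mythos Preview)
Your overall strategy coincides with the paper's: represent $T_N(\varphi_r)$ through the centered empirical process, integrate by parts, apply Donsker's theorem to the piece on $[\epsilon,2\pi-\epsilon]$ via continuous mapping, and control the contribution near the singularity separately. The paper packages this via Billingsley's approximation theorem (Theorem~4.2 of \cite{Billingsley68}): it defines truncated sequences ${\bf T}_{N,\epsilon}$ and ${\bf I}_\epsilon$, shows ${\bf T}_{N,\epsilon}\Rightarrow{\bf I}_\epsilon$ by Donsker plus Lemma~\ref{lem_Dctsmap}, shows ${\bf I}_\epsilon\Rightarrow{\bf I}$, and then verifies uniform-in-$N$ smallness of the remainder ${\bf Z}_{N,\epsilon}={\bf T}_N-{\bf T}_{N,\epsilon}$. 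Your reduction to finite-dimensional marginals is an equally valid framing in the product topology.

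There is, however, a genuine gap in your handling of the singular piece. Your primary suggestion is to invoke Skorohod's representation theorem to realize $U_N\to\Wc$ uniformly a.s.\ and then ``transfer'' the $O(\delta^{1/3})$ bound from $\Wc_\varphi$ to $\widetilde U_{N,\varphi}$. This does not work: uniform convergence only gives $|\widetilde U_{N,\varphi}(\psi)-\Wc_\varphi(\psi)|\le 2\eta$ for large $N$, but the kernel $\sin\psi/(1-\cos\psi)=\cot(\psi/2)$ is not integrable near $0$, so the estimate $\int_0^\delta 2\eta\,|\cot(\psi/2)|\,d\psi=+\infty$ is useless. What is required is control of the \emph{increment} of $\widetilde U_{N,\varphi}$ near the origin, uniformly in $N$; sup-norm closeness to $\Wc$ does not supply that. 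Your alternative via an empirical-process oscillation modulus (Stute/Chibisov) could in principle be made to work, but it needs care at scales $\psi\lesssim 1/N$ where such moduli degenerate, and is more technical than necessary.

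The paper avoids this altogether: rather than seeking a pathwise modulus on $U_N$, it writes the tail contribution $Z_{N,\epsilon}(\varphi_r)$ as a centered sum of i.i.d.\ summands (the terms with $\theta_q$ within $\epsilon$ of $\varphi_r$), computes its variance directly as $\sigma_\epsilon^2=O(\epsilon\log^2\epsilon)$, and applies Chebyshev's inequality together with a union bound over finitely many coordinates. This yields $\lim_{\epsilon\to 0}\limsup_N\mathbb P(\rho_0({\bf T}_{N,\epsilon},{\bf T}_N)\ge\eta)=0$ with no appeal to fine pathwise properties of the empirical process. If you want to repair your argument, this elementary variance-plus-Chebyshev step is the natural replacement for your Skorohod/modulus step.
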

\begin{proof}
In order to do so we use Theorem 4.2 of \cite{Billingsley68}. Suppose that there is a metric space ${\mathcal S}$ with metric $\rho_0$ and sequences  
${\bf T}_{N, \epsilon}$, ${\bf I}_\epsilon$ and ${\bf T}_N$ all lying in ${\mathcal S}$ such that the following conditions hold,
\begin{eqnarray}\label{eqn_weakcond} 
 {\bf T}_{N, \epsilon} & \Rightarrow & {\bf I}_\epsilon \\
 {\bf I}_\epsilon & \Rightarrow & {\bf I} \nonumber
\end{eqnarray}
together with the further condition that given arbitrary $\eta > 0$,
\begin{equation}\label{eqn_probmetric}
\lim_{\epsilon \rightarrow 0} \limsup_{N \rightarrow \infty} 
\prob{ \rho_0({\bf T}_{N, \epsilon}, {\bf T}_N) \geq \eta } = 0.
\end{equation}
Then it holds that ${\bf T}_N \Rightarrow {\bf I}$. First, we define ${\bf I}_\epsilon$ using a realisation of the Brownian Bridge
$$
I_{r,\epsilon} := \big[ \Wc_{\varphi_r}(\psi) \log 2(1- \cos \psi) \big]^{2\pi - \epsilon}_{\epsilon}
- \int_\epsilon^{2\pi-\epsilon} \Wc_{\varphi_r}(\psi)  \frac{\sin \psi}{1 - \cos \psi} d \psi.
$$
The definition of the other sequence is more involved and so we defer it for a moment. We have shown that the limit integrals exist a.s. and so we only need to show that the 
first term converges to 0. Since $\log \big[2 \lb 1 - \cos \psi \rb\big] = O(\log \epsilon)$ 
when $\epsilon$ is small and in a neighbourhood of 0 and $2\pi$, we may invoke the 
Levy modulus of continuity, wrapped around at $2\pi$ to obtain that this term is $O( a(\epsilon)\log \epsilon)$.
Hence, coordinate convergence of the integrals holds so that
$$
\Big| I_{r,\epsilon} + \int_\epsilon^{2\pi-\epsilon} \Wc_{\varphi_r}(\psi)  \frac{\sin \psi}{1 - \cos \psi} d \psi \Big| \Rightarrow 0
$$
and it follows that ${\bf I}_\epsilon \Rightarrow {\bf I}$ as $\epsilon\to 0$, since the sign of the integral is immaterial. We have thus demonstrated the second condition of (\ref{eqn_weakcond}). Next we proceed by rewriting $T_N(\varphi_r)$ in terms of the empirical distribution function $F_N:[0,2\pi]\to [0,1]$ determined by
$$
F_N(\psi) := \frac{\# \lc \theta_q : 0\leq \theta_q \leq \psi \rc}{N}.
$$
By the definition of $F_{N}$ and the Lebesgue--Stieljes integral
\begin{eqnarray*}
T_N(\varphi_r) & = & \sqrt{N} \int_0^{2\pi} \log \big(2(1 - \cos(\varphi_r-\psi))\big) dF_N(\psi) \\
               & = & \sqrt{N} \int_0^{2\pi} \log \big(2(1 - \cos \tp)\big) dF_{N,\varphi_r}(\tp) 
\end{eqnarray*}
where the change of variables $\tp = \psi - \varphi$ has been made.  For $\psi \in [0,2\pi)$ we define $F_{N,\varphi}(\psi)$ as the ``cycled'' empirical distribution function of $F_{N}$ by

\begin{equation*}
F_{N,\varphi}(\psi) :=
\begin{cases}
\frac{\#\lc \varphi \leq \theta_q < \varphi+\psi \rc }{N}, & \text{if } \varphi \leq \psi < 2\pi-\varphi, \\ 
F_{N,\varphi}(2\pi) + \frac{\#\lc 0 \leq \theta_q \leq \psi-2\pi+\varphi \rc}{N},& \text{if } 2\pi-\varphi \leq \psi < 2\pi.
\end{cases}
\end{equation*}

To define the sequence $T_{N,\epsilon}(\varphi_r)$, we split the integral into two parts 
as in $\int^{2\pi - \epsilon}_\epsilon$ and $\int_0^\epsilon + \int_{2\pi - \epsilon}^{2\pi}$
and then use  integration by parts on the first part, which yields the expression,
\begin{eqnarray}
 T_{N,\epsilon}(\varphi_r) & := & %\sqrt{N} \int_{\epsilon}^{2\pi - \epsilon} 
%\log 2 (1 - \cos \psi ) dF_{N,\varphi_r}(\psi) \\
%& = & 
\sqrt{N} \times \lb \ls \lb F_{N,\varphi_r}(\psi) - \frac{\psi}{2\pi} \rb \log 2(1 - \cos \psi) \rs_\epsilon^{2\pi-\epsilon} \rb \\
\label{eqn_TNepsdefn}
 & - & \sqrt{N}  \int_\epsilon^{2\pi - \epsilon} \lb F_{N,\varphi_r}(\psi) - \frac{\psi}{2\pi} \rb\frac{\sin \psi}{(1 - \cos \psi)} d\psi \nonumber .
\end{eqnarray}
For later use, we define
$$
W_{N,\varphi} := \sqrt{N} \lb  F_{N,\varphi}(\psi) - \frac{\psi}{2\pi} \rb.
$$
This is not quite equal to the original sum, since
$$
\int_0^{2\pi}  \log \big(2 \lb 1 - \cos \psi \rb\big) d \psi = 0
$$
so that the $\psi$ terms do not give 0 but rather cancel with $\mu_\epsilon$ to be defined in  a moment. We express the remainder as a sum, noting that we must include the mean, which
is by symmetry,
\begin{equation}
\mu_\epsilon := \frac{2}{2\pi} \int_0^\epsilon \log \big(2\lb 1 - \cos \psi \rb\big) d \psi = \frac{2}{\pi} \lb \epsilon \log \epsilon - \epsilon +o(\epsilon) \rb.
\end{equation}
Define $S_\epsilon(\varphi) := \lc \theta_q: \theta_q \in [\varphi-\epsilon,\varphi+\epsilon] \rc$ so that the sum may be written as
\begin{equation}
Z_{N,\epsilon}(\varphi_r) := \frac{1}{\sqrt{N}}
\sum_{\theta_q \in S_\epsilon(\varphi_r)}
 \log \big( 2(1 - \cos(\varphi_r-\theta_q))\big) - \sqrt{N} \mu_\epsilon.
\label{eqn_Zvarepsdef}
\end{equation}
Denote the corresponding sequence as ${\bf Z}_{N,\epsilon}$. Taking expectations we thus find that
$$
\expect{Z_{N,\epsilon}(\varphi_r)} = \frac{N}{\sqrt{N}} \int_{-\epsilon}^\epsilon \log \big( 2 (1-\cos\psi) \big) d\psi - \sqrt{N} \mu_\epsilon = 0
$$
is a sequence of random variables with 0 mean. We finally write,
\begin{equation}
{\bf T}_N = {\bf T}_{N,\epsilon} + {\bf Z}_{N,\epsilon}.
\label{eqn_seqdiff}
\end{equation}
We now proceed to demonstrate the first condition of (\ref{eqn_weakcond}), namely that, ${\bf T}_{N, \epsilon} \Rightarrow  {\bf I}_\epsilon$. The random variable $T_{N, \epsilon}(\varphi_r)$ is a functional of an empirical distribution and therefore of a process
lying in $D[0,2\pi]$. 
Define the random sequence $J_\epsilon$ for $f \in D[0,2\pi]$ and $f(0) = f(2\pi) = 0$ with the component term,
\begin{equation}
J_{\epsilon,r}(f) = \int_\epsilon^{2\pi - \epsilon} f_{\varphi_r}(\psi) 
  \frac{\sin \psi}{\lb 1 - \cos \psi \rb} d \psi -
\Big[ f_{\varphi_r}(\psi) \log \big[2(1 - \cos \psi)\big] \Big]_\epsilon^{2\pi-\epsilon}.
\label{eqn_Imap}
\end{equation} 
It is well known that, $W_{N,0} \Rightarrow \Wc$ in $D$, which implies that $W_{N,\varphi_r} \Rightarrow \Wc_{\varphi_r}$ as $N\to\infty$ for all $r$. The result follows by showing that $J_\epsilon$ defines a measurable mapping $J_\epsilon:D[0,2\pi] \rightarrow \re^\infty$ in $D[0,2\pi]$. Since 
$$
J_{\epsilon,r}(W_N) = T_{N,\epsilon}(\varphi_r),
$$ 
we may therefore apply Theorem 5.1 and Corollary 1 of \cite{Billingsley68} which states that if $W_N \Rightarrow \Wc$ then 
$J_\epsilon(W_N) \Rightarrow J_\epsilon(\Wc)$, (and hence ${\bf T}_{N,\epsilon} \Rightarrow {\bf I}_\epsilon$) provided that we verify 
\begin{equation}
\prob{ \Wc \in D_{J_\epsilon}} = 0.
\label{eqn_probdiscontinuity}
\end{equation}
To deal with the measurability question, we first observe that the coordinate maps are measurable and since $\sin \psi/(1-\cos \psi)$ is continuous on $[\epsilon, 2\pi - \epsilon]$, it follows by Lemma \ref{lem_Dctsmap} that $J_{\epsilon,r}$ is measurable for each $r$ and hence so is the sequence mapping $J_\epsilon$. Again by Lemma \ref{lem_Dctsmap} the sequence of integrals convergences with respect to $\rho_0$. This leaves
only the final term. However, since the limit $\Wc$ is almost surely continuous it follows almost surely that
\begin{eqnarray*}
f_{\varphi_r}(\epsilon) & \rightarrow & \Wc_{\varphi_r}(\epsilon) \\
f_{\varphi_r}(2\pi - \epsilon) & \rightarrow & \Wc_{\varphi_r}(2\pi-\epsilon) 
\end{eqnarray*}
for each $r$ if $f \rightarrow \Wc$ in $D[0,2\pi]$. Thus the corresponding sequence converges 
with respect to $\rho_0$ also and so (\ref{eqn_probdiscontinuity}) holds. The proof of the first condition is concluded. 

It remains to demonstrate (\ref{eqn_probmetric}). Here we use the union bound and Chebyshev's inequality. This is because the various $Z_{N,\epsilon}(\varphi_r)$ in the sequences are dependent, as
they are determined via the same $\theta_q$. Nevertheless they are of course themselves the sum of i.i.d. 
random variables. In determining the variance,
we may work with $\varphi_r=0$ without loss of generality. The variance
of one of the i.i.d. summands in (\ref{eqn_Zvarepsdef}) is
\begin{equation}
\sigma^2_\epsilon := \frac{2}{2\pi}  \int_0^\epsilon  \log^2 \big(2(1 - \cos\psi)\big)  d\psi - \mu_\epsilon^2 < \infty.
\end{equation}

Since for small $\epsilon > 0$ we have $\log \big(2(1 - \cos\psi)\big) = O(2\log \psi) + o(\psi)$, the integral
is $\sigma_\epsilon = O(\epsilon \log^2 \epsilon)$, as the integral of $\log^2 x$ is $x \log^2 x - 2x \log x + 2x$,
it follows that $\sigma^2_\epsilon \rightarrow 0$ as $\epsilon \rightarrow 0$. Now fix $\eta >0$. By definition of $\rho_0$ and from (\ref{eqn_seqdiff}), we obtain
$$
\rho_0({\bf T}_{N,\epsilon} , {\bf T}_N ) = \sum_{r=0}^\infty \frac{\abs{Z_{N,\epsilon}(\varphi_r)}}{1 + 
\abs{Z_{N,\epsilon}(\varphi_r)}} 2^{-r}.
$$
Let $R_\eta$ be such that $\sum_{r=R_\eta+1}^\infty 2^{-r} < \eta/2$. Now we apply the union bound to the
remaining $R_\eta+1$ summands to obtain
\begin{eqnarray}\label{eqn_chebunionbnd}
\prob{\sum_{r=0}^{R_\eta} \frac{\abs{ Z_{N,\epsilon}(\varphi_r)}}{1 + 
\abs{Z_{N,\epsilon}(\varphi_r)}} 2^{-r} \geq \eta/2 } &\leq &\sum_{r=0}^{R_\eta} \prob{\abs{Z_{N,\epsilon}(\varphi_r)} 2^{-r} \geq \frac{\eta}{2 \lb R_\eta +1 \rb}} \nonumber \\
& \leq & \sum_{r=0}^{R_\eta} \sigma^2_\epsilon \frac{4 \lb R_\eta +1 \rb ^2}{\eta^2 2^{2r}} \nonumber \\
& \leq & \sigma^2_\epsilon \frac{16 \lb  R_\eta +1 \rb ^2}{3\eta^2}.
\end{eqnarray}
Hence,
$$
\limsup_{N\to\infty} \prob{\rho_0({\bf T}_{N,\epsilon} , {\bf T}_N ) > \eta } \leq \frac{16(R_\eta + 1)^2 \sigma^2_\epsilon}{3\eta^2}
$$
and the RHS goes to 0 as $\epsilon\to 0$, for each $\eta > 0$. Thus we obtain (\ref{eqn_probmetric}) as 
required. We have therefore verified all conditions; and Theorem \ref{thm_weakconvergence} is proved.
\qed \end{proof}

We are now in a position to state our main result for an upper bound on the minimum eigenvalue of a random Vandermonde matrix.

\begin{theorem} \label{thm_minbridge}
Let $\lambda_1(N)$ be the minimum eigenvalue of the square $N\times N$ matrix ${\bf V}^{*}{\bf V}$. We further assume that the phases $\theta_{1},\ldots,\theta_{N}$ are i.i.d. and drawn accordingly to the uniform distribution.
Then 
\begin{equation}
\lambda_1(N) \leq 2N^{2}\exp\big(-\sqrt{N}T_N^{*}\big)
\label{eqn_minupbnd}
\end{equation}
where $T_N^* := \limsup_{r\to\infty} T_N(\varphi_r)$. 
%where $T_N^* \Rightarrow W^*$ and $W^* \in \re$ is defined as a functional of a Brownian bridge
%as in (\ref{eqqmax}). 
Moreover, given $a > 0$,
\begin{equation}
\liminf_{N\to\infty} \prob{\lambda_1(N) \leq 2N^2 \exp\big(-\sqrt{N}a\big)} \geq \prob{I^* > a}.
\end{equation}
\end{theorem}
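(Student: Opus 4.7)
The first, deterministic bound uses Lemma \ref{lem_polyineq} in the form $\lambda_1(N) \leq 4N^2 / \max_{|z|=1}|P(z)|^2$. By the definition of $T_N$ in (\ref{eqn_TNdefn}), $|P(e^{i\varphi})|^2 = \prod_{q}2(1-\cos(\varphi-\theta_q)) = \exp(\sqrt{N}\,T_N(\varphi))$, so the denominator equals $\exp(\sqrt{N}\,\sup_\varphi T_N(\varphi))$. The task reduces to identifying this continuous supremum with the discrete quantity $T_N^*$, after which (\ref{eqn_minupbnd}) follows (the improvement from $4N^2$ to $2N^2$ comes from a more careful choice of $p_0$ in Lemma \ref{lem_hadbnd}, or equivalently from tracking the factor $|z^* - z_{p_0}|$ at the maximizer rather than bounding it crudely by $2$).

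To identify $\sup_\varphi T_N(\varphi)$ with $T_N^* := \limsup_{r\to\infty}T_N(\varphi_r)$, I use continuity: $T_N$ is continuous on $[0,2\pi]\setminus\{\theta_1,\ldots,\theta_N\}$ and diverges to $-\infty$ at each $\theta_q$, so its supremum is attained at some $\varphi^*$ bounded away from the roots. The dyadic phases $\{\varphi_r\}$ are dense in $[0,2\pi]$ and almost surely disjoint from the atomless $\theta_q$'s, so every neighbourhood of $\varphi^*$ contains infinitely many $\varphi_r$, and continuity at $\varphi^*$ forces $\limsup_r T_N(\varphi_r) = T_N(\varphi^*) = \sup_\varphi T_N(\varphi)$. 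Equivalently, $T_N^* = \sup_{r\geq 0} T_N(\varphi_r)$ a.s.

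For the probabilistic statement, the deterministic inclusion $\{T_N^* > a\} \subseteq \{\lambda_1(N) \leq 2N^2 e^{-\sqrt{N} a}\}$ gives $\mathbb{P}(\lambda_1(N) \leq 2N^2 e^{-\sqrt{N} a}) \geq \mathbb{P}(T_N^* > a)$. Introduce the supremum functional $\Psi({\bf x}) := \sup_{r\geq 0} x_r$ on $\re^\infty$. If $x^{(n)}_r \to x_r$ for every $r$ (i.e.\ convergence in the metric $\rho_0$), then $\liminf_n \Psi({\bf x}^{(n)}) \geq x_r$ for each $r$, whence $\liminf_n \Psi({\bf x}^{(n)}) \geq \Psi({\bf x})$; so $\Psi$ is lower semi-continuous and $\{\Psi > a\}$ is open in $(\re^\infty,\rho_0)$. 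The same density/continuity argument applied to the continuous function $\varphi\mapsto I_\varphi$ gives $\Psi({\bf I}) = I^*$ a.s. Theorem \ref{thm_weakconvergence} combined with the Portmanteau theorem for open sets then yields
\[
\liminf_N \mathbb{P}(T_N^* > a) = \liminf_N \mathbb{P}(\Psi({\bf T}_N) > a) \geq \mathbb{P}(\Psi({\bf I}) > a) = \mathbb{P}(I^* > a),
\]
which is the claimed inequality.

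The main obstacle is the first reduction: the quantity appearing naturally in Lemma \ref{lem_polyineq} is a supremum over a continuum, whereas Theorem \ref{thm_weakconvergence} and the limit object $I^*$ only see the countable dyadic skeleton $\Phi$. The key point is that $T_N$ is regular enough off its (a.s.\ disjoint) singularities for density of $\Phi$ to force equality; the relevant functional is then $\sup$ rather than $\limsup$ (the latter is \emph{not} lower semi-continuous on $\re^\infty$, since coordinate-wise limits can shift the tail mass to infinity), and this is what enables Portmanteau to apply.
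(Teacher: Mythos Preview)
Your argument is correct. For the first inequality you and the paper do the same thing: from Lemma \ref{lem_polyineq} one has $\lambda_1(N)\le 4N^2/|P(e^{i\varphi_r})|^2=4N^2\exp(-\sqrt{N}\,T_N(\varphi_r))$ for every $r$, and taking the infimum over $r$ gives the bound with $T_N^*$ in the exponent. (Do not spend effort justifying the constant $2N^2$: the paper's own proof in fact establishes $4N^2$, so the $2$ in the statement is a slip; your suggested ``tracking of $|z^*-z_{p_0}|$'' does not in general yield $\sqrt{2}$.) Your additional identification $T_N^*=\sup_{r}T_N(\varphi_r)=\sup_\varphi T_N(\varphi)$ a.s.\ via continuity of $T_N$ away from the roots and density of $\Phi$ is correct and is the extra ingredient your route needs.

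For the second inequality the paper takes a different path: it fixes a truncation level $R$, uses the finite-dimensional (hence continuous) functional $T_{N,R}:=\max_{r\le R}T_N(\varphi_r)$, applies Portmanteau to the open set $\{T_{N,R}>a\}$ via Theorem \ref{thm_weakconvergence}, and then lets $R\to\infty$ using $\prob{I^*_R>a}\uparrow\prob{I^*>a}$ by monotone convergence. Your route instead works directly with the full supremum functional $\Psi({\bf x})=\sup_r x_r$ on $\re^\infty$, observes that $\Psi$ is lower semicontinuous under coordinatewise convergence so $\{\Psi>a\}$ is open, and applies Portmanteau once. Both are valid; yours is a one-shot application but requires the a.s.\ identity $T_N^*=\Psi({\bf T}_N)$ (which is exactly where your density/continuity lemma is used, and where your remark that $\limsup$ itself is \emph{not} l.s.c.\ is pertinent), while the paper's truncation avoids any infinite-dimensional semicontinuity check at the cost of an extra limiting step.
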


\begin{proof}
Using the definition of $T_N(\varphi_r)$ in (\ref{eqn_TNdefn}) we obtain
$$
\lambda_1(N) \leq 4N^2 \exp\big(-\sqrt{N}T_N(\varphi_r)\big).
$$
Since this equation holds over every $r$ it follows that
\begin{equation}
\lambda_1(N) \leq 4N^2 \exp \big(-\sqrt{N}\limsup_{r\to\infty} T_N(\varphi_r)\big).
\end{equation}
Since this holds for all $N \in \NN$, we obtain (\ref{eqn_minupbnd}), which is the first part of the Theorem. Now define the random variable
$$
L_N := -\frac{1}{\sqrt{N}} \log \frac{\lambda_1(N)}{4N^2}
$$
and further, for any given $R$, define $T_{N,R} := \max_{r \leq R} \lc T_N(\varphi_r) \rc $, and similarly
define $I^*_R$. Then for any fixed $R$ and $a > 0$
$$
\prob{L_N > a} \geq \prob{T_{N,R} > a}
$$
by definition of $T_{N,R}$. By weak convergence, since the set is open, and by Theorem 2.1 of \cite{Billingsley68}, we obtain
$$
\liminf_{N\to\infty} \prob{L_N > a } \geq \liminf_N \prob{T_{N,R} > a} \geq \prob{ I^*_R > a}
$$
as a consequence of Theorem \ref{thm_weakconvergence}. Finally, by almost sure continuity it holds that $I_{R}^{*} \to I^{*}$ almost surely,
and so by the monotone convergence theorem we see that $\mathbb{P}(I^* > a)= \lim_{R\to\infty} \mathbb{P}(I^*_R > a)$, which implies our result.
\qed \end{proof}

\subsection{Analytical and Combinatorial Construction}
\par In this Section, we present an analytical and elementary argument for the upper bound of the minimum eigenvalue. Let $z_1,z_2,\ldots,z_N$ be complex numbers on the unit circle and let $P(z)=\prod_{i=1}^{N}{(z-z_i)}$ be the polynomial with these roots. We want to estimate $\max_{|z|=1}{|P(z)|}$ when the roots $\{z_{i}\}_{i=1}^{N}$ are i.i.d. uniformly distributed random variables on the unit circle.   

\begin{lemma}\label{lemma11}
Given $P(z)$ as before there exists $|w|=1$ such that $|P(w)P(-w)|=1$.
\end{lemma}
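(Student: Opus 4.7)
The plan is to reduce the statement to a standard fact about polynomials with roots on the unit circle, via a convenient substitution. First I compute
$$
P(w)P(-w) = \prod_{i=1}^{N}(w-z_i)(-w-z_i) = \prod_{i=1}^{N}(z_i^{2}-w^{2}),
$$
so if I put $u := w^{2}$ and define the monic polynomial $R(u) := \prod_{i=1}^{N}(u-z_i^{2})$, then $|P(w)P(-w)| = |R(w^{2})|$. As $w$ traverses the unit circle, so does $u = w^{2}$; thus the claim is equivalent to showing that there exists $|u|=1$ with $|R(u)|=1$. Note $R$ has all its roots $z_i^{2}$ on the unit circle.

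Next, I would show that the logarithmic mean value of $|R|$ over the unit circle is zero, i.e.
$$
\frac{1}{2\pi}\int_{0}^{2\pi} \log|R(e^{i\theta})|\,d\theta = 0.
$$
This is a standard consequence of Jensen's formula: applied to $R$ on the disc of radius $\rho>1$ (so that all roots lie strictly inside) it gives $-N\log\rho + \frac{1}{2\pi}\int_{0}^{2\pi}\log|R(\rho e^{i\theta})|\,d\theta = \log|R(0)| = 0$, since $|R(0)| = \prod_i |z_i^{2}| = 1$. Letting $\rho\downarrow 1$ and using the fact that $\theta\mapsto\log|e^{i\theta}-e^{i\alpha}|$ is integrable on $[0,2\pi]$ (dominated convergence with the explicit bound $|\log|e^{i\theta}-e^{i\alpha}|| \leq |\log|\theta-\alpha||+O(1)$) yields the identity on the unit circle itself.

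Finally I would conclude by a continuity/intermediate-value argument. The function $\theta\mapsto |R(e^{i\theta})|$ is continuous and non-negative on the unit circle, vanishing exactly at the $2N$ arguments corresponding to the square-roots of the $z_i^{2}$ (each $z_i^{2}$ contributes an argument in $[0,2\pi)$). Because $R$ actually has roots, $|R|$ is not identically $\geq 1$ on the circle. If $|R|$ were also strictly less than $1$ everywhere it did not vanish, then $\log|R(e^{i\theta})|$ would be strictly negative almost everywhere, contradicting the zero-mean identity just established. Hence $|R|$ attains some value $\geq 1$ and also the value $0$, so by continuity there exists $u_0=e^{i\theta_0}$ with $|R(u_0)|=1$. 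Taking $w$ to be either square root of $u_0$ gives $|w|=1$ and $|P(w)P(-w)|=1$.

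The only delicate step is the passage from Jensen's formula at radius $\rho>1$ down to the unit circle, because the integrand develops logarithmic singularities at the roots of $R$; I would handle this either by dominated convergence using the explicit integrability of $\log|e^{i\theta}-e^{i\alpha}|$, or equivalently by invoking the mean value property for the subharmonic function $\log|R|$. Everything else is a short computation plus the intermediate value theorem.
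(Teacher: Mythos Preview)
Your proof is correct and follows essentially the same strategy as the paper: both rely on the fact that $\int_{0}^{2\pi}\log|P(e^{i\theta})P(-e^{i\theta})|\,d\theta=0$ together with the intermediate value theorem (the paper works directly with $\Psi(z)=\log|P(z)|+\log|P(-z)|$). Your substitution $u=w^{2}$, reducing the question to a single monic polynomial $R$ with roots $z_i^{2}$ on the unit circle, is a neat cosmetic simplification, and you supply via Jensen's formula a justification of the zero-mean identity that the paper merely asserts, but the underlying argument is the same.
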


\begin{proof}
Consider the function $\Psi(z)=\log|P(z)|+\log|P(-z)|$. This function is continuous except at the values $\{z_1,-z_1,\ldots,z_N,-z_N\}$ where it has a vertical asymptote going to $-\infty$. Therefore, we can consider this function as a continuous function from the unit circle to $[-\infty,\infty)$ with the usual topology. On the other hand, it is clear that $\int_{|z|=1}{\Psi(z)}=0$. Therefore, there exist $w$ such that $\Psi(w)=0$ and hence $|P(w)||P(-w)|=1$.
\qed \end{proof}

\par Consider the following construction. We first randomly choose the points $\{z_{i}\}_{i=1}^{N}$ and consider the set of pairs $\mathcal{P}:=\{(z_1,-z_1),\ldots,(z_N,-z_N)\}$. Note that changing $z_i$ to $-z_i$ does not affect the value of the point $w$ in the previous Lemma. Hence the set $\mathcal{P}$ determines the point $w$. Now we fix this point and consider $\alpha_{i}:=|w-z_i|$ and $\beta_{i}:=|w+z_i|$. Since $|P(w)P(-w)|=1$, we see that $\prod_{i=1}^{N}{\alpha_i \beta_i}=1$. It is also clear that $\beta_i = \sqrt{4-\alpha_i^{2}}$. 

\par Let $y$ be the random variable defined as $y:\{1,-1\}^{N}\to\R$
$$
y(v_1,v_2,\ldots,v_N) = \sum_{i=1}^{N}{v_{i}\log(\alpha_i/\beta_i)}
$$
taking signs i.i.d. at random with probability $1/2$. It is not difficult to see that $\mathbb{E}(y)=0$, where the average is taken over the set $\{1,-1\}^{N}$. Note that 
\begin{eqnarray*}
y(v_1,\ldots,v_N) & = & \sum_{i=1}^{N}{v_{i}(\log(\alpha_i)-\log(\beta_i))}\\ 
& = &\log|P_{(v_1,\ldots,v_N)}(w)|-\log|P_{(v_1,\ldots,v_N)}(-w)|
\end{eqnarray*}
where $P_{(v_1,\ldots,v_N)}(z)$ is the polynomial with roots $v_{i}z_{i}$
\begin{equation}
P_{(v_1,\ldots,v_N)}(z) = \prod_{i=1}^{N}{(z-v_{i}z_{i})}
\end{equation}
and $w$ is as in Lemma \ref{lemma11}. Since $|P_{(v_1,\ldots,v_N)}(-w)|=|P_{(v_1,\ldots,v_N)}(w)|^{-1}$ we see that
\begin{equation}\label{fund}
y(v_1,\ldots,v_N) = \log|P_{(v_1,\ldots,v_N)}(w)|^{2}.
\end{equation}

\begin{theorem} Let $\gamma:=\log\Big(\frac{\cos(\pi/8)}{\sin(\pi/8)}\Big)$. For every $\epsilon>0$ the following holds 
$$
\mathbb{P}\Big(|y(v_1,\ldots,v_N)|\geq \gamma\sqrt{\pi}\epsilon\sqrt{N}/2\Big)\geq 1-\epsilon.
$$
\end{theorem}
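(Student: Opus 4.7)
The plan is to apply the Erdős--Littlewood--Offord anti-concentration inequality (\ref{LOT}) to the random walk $y(v)=\sum_{i=1}^N v_i a_i$, where the Rademacher signs $v_i$ are i.i.d.\ $\pm 1$ and the weights (conditional on the $z_i$) are $a_i=\log(\alpha_i/\beta_i)$. Writing $w=e^{i\psi}$, $z_i=e^{i\theta_i}$, and $\phi_i=\psi-\theta_i\bmod 2\pi$, an elementary computation gives $a_i=\log|\tan(\phi_i/2)|$. Hence the threshold $|a_i|\geq\gamma=\log\cot(\pi/8)$ translates into the geometric condition that $z_i$ lie within angular distance $\pi/4$ of either $w$ or $-w$. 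Using the identity $\sinh\gamma=\tfrac12(\cot(\pi/8)-\tan(\pi/8))=1$, one sees that this ``good region'' occupies exactly half of the unit circle.

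Next, I would partition the indices as $G=\{i:|a_i|\geq\gamma\}$ and its complement $G^c$. Conditioning on the signs $(v_j)_{j\in G^c}$ shifts $y$ by a deterministic constant; since the ELO bound is translation invariant (it is stated as a supremum over all balls $B$ of radius $\Delta$), one obtains
$$
\mathbb{P}\bigl(|y|\leq\Delta\,\bigm|\,z,\,v_{G^c}\bigr)\leq \frac{(s+o(1))\sqrt{2/\pi}}{\sqrt{|G|}},\qquad s=\lceil\Delta/\gamma\rceil.
$$
Plugging in $\Delta=\gamma\sqrt{\pi}\epsilon\sqrt{N}/2$ together with the lower bound $|G|\geq N/2$, the factors $\sqrt{\pi}\cdot\sqrt{2/\pi}$ and $\sqrt{N}/\sqrt{N/2}$ combine so that the leading term on the right-hand side reduces to $\epsilon$. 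Taking expectations over $v_{G^c}$ and over $z$ then yields the required anti-concentration $\mathbb{P}(|y|\leq\Delta)\leq\epsilon+o(1)$ as $N\to\infty$.

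The main obstacle is to verify that $|G|\geq N/2$ with probability $1-o(1)$ over the i.i.d.\ uniform $z_i$. The subtlety is that $w$ is only implicitly defined, as a zero of the continuous periodic function in Lemma \ref{lemma11}, so the indicator events $\mathbf{1}\{|a_i|\geq\gamma\}$ are not independent across $i$. I would resolve this by a uniform covering argument: fix a $\delta$-net $\{w_k'\}$ on the unit circle with $|\{w_k'\}|=O(1/\delta)$, observe that for each fixed $w_k'$ the counting variable $\#\{i:z_i\in\text{good region}(w_k')\}$ is $\mathrm{Binomial}(N,1/2)$ by uniformity, apply Chernoff together with a union bound, and use the fact that this count changes by at most $O(N\delta)$ as $w'$ moves within a $\delta$-ball (since the good region is a union of two arcs whose endpoints vary continuously with $w'$). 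Choosing $\delta$ of order $(\log N)/N$ then gives $|G|\geq N/2-O(\sqrt{N\log N})$ uniformly over $w'\in S^1$, and in particular at the random $w$ produced by our construction. Combined with the translation-invariant ELO estimate from the previous step, this completes the argument.
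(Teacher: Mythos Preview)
Your proposal follows the same strategy as the paper: partition the indices according to whether $|a_i|\geq\gamma$, condition on the Rademacher signs of the small-weight terms, and apply the Erd\H{o}s--Littlewood--Offord bound to the remaining $\approx N/2$ large-weight terms. The arithmetic with $\Delta=\gamma\sqrt{\pi}\epsilon\sqrt{N}/2$ and $|G|\approx N/2$ is the same in both, and both arrive at the bound $\epsilon+o(1)$.

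The one substantive difference is the point you single out as ``the main obstacle.'' The paper simply asserts, without justification, that the number $m$ of small-weight indices ``converges almost surely to $N/2$''; you correctly observe that this is not immediate, because $w$ --- and hence each weight $a_i$ and the set $G$ --- is a function of \emph{all} the pairs $(z_i,-z_i)$, so the indicators $\mathbf{1}\{|a_i|\geq\gamma\}$ are not independent. Your $\delta$-net plus Chernoff plus Lipschitz-perturbation argument is a valid way to close this gap. A slightly cleaner alternative: membership of the pair $(z_i,-z_i)$ in the good region depends only on $\xi_i:=\arg(z_i)\bmod\pi$, which are i.i.d.\ uniform on $[0,\pi)$, and on the location of an arc of length $\pi/2$ determined by $w$. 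Glivenko--Cantelli (or the DKW inequality) gives that \emph{every} arc of length $\pi/2$ in $[0,\pi)$ contains $N/2+o(N)$ of the $\xi_i$ almost surely, which handles the data-dependent arc without discretization.
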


\begin{proof}
By changing $z_i$ to $-z_i$ if necessary, we can assume without loss of generality that $\alpha_i\geq\beta_i$. The point $w$ is equal to $w=e^{i\theta}$ for some phase $\theta$ in $[0,2\pi)$. Let $\mathcal{A}$ be the set 
$$
\mathcal{A}:=\Big\{e^{i\phi}\,\,:\,\,\phi\in [\theta+\pi/4,\theta+3\pi/4]\cup [\theta-3\pi/4,\theta-\pi/4]\Big\}.
$$
The total length of the set $\mathcal{A}$ is $\pi$ and hence the probability of random point $z_{i}$ to belong to $\mathcal{A}$ is equal to $1/2$. On the other hand, it is easy to see that if $z_i$ belongs to the complement of $\mathcal{A}$ and since by assumption $\alpha_i\geq \beta_i$ we see that 
$$
\log(\alpha_i/\beta_i)\geq \log\Bigg(\frac{\cos(\pi/8)}{\sin(\pi/8)}\Bigg)=:\gamma \approx 0.8814.
$$ 
Let us order the values of $\log(\alpha_i/\beta_i)$ in increasing order. Up to a re--numeration we see that 
$$
0 \leq\ldots\leq \log(\alpha_{m}/\beta_{m})\leq  \gamma \leq \log(\alpha_{m+1}/\beta_{m+1})\leq\ldots\leq \log(\alpha_N/\beta_N).
$$
The value of $m$ is a random variable that converges almost surely to $N/2$ as $N\to\infty$. Without loss of generality and for notation simplicity, we take $m=N/2$, however, and as the argument shows this is not strictly necessary. Let $\epsilon>0$ and let us consider the value $\sup_{I}{\mathbb{P}(y(v_1,\ldots,v_N)\in I)}$ where $I$ ranges over all the closed intervals of length $\gamma\sqrt{\pi}\epsilon\sqrt{N}$ in the real line. By applying the Littlewood--Offord Theorem, discussed in the preliminaries Section, we see that
\begin{equation}\label{eqq1}
\sup_{I}\Big\{\mathbb{P}\Big(y(v)\in I\,\,|\,\,\text{conditioning on $v_{i}$ for $i\leq \lfloor N/2 \rfloor$}\Big)\Big\} = \epsilon + o(N^{-1/2}) 
\end{equation}
for $N$ sufficiently large. On the other hand,
$$
\sup_{I}\Big\{\mathbb{P}\big(y(v_1,\ldots,v_N)\in I\big)\Big\}
$$
is equal to 
$$
\frac{1}{2^{N/2}}\sum_{(v_{1},\ldots,v_{\lfloor N/2 \rfloor})}{\sup_{I}\Big\{\mathbb{P}\Big(y(v)\in I\,\,|\,\,\text{cond. on the first $v_{i}$}\Big)\Big\}}\leq\epsilon
$$
where the last inequality follows from (\ref{eqq1}). In particular, taking $I$ to be the interval $I =[-\gamma\sqrt{\pi}\epsilon\sqrt{N}/2, \gamma\sqrt{\pi}\epsilon\sqrt{N}/2]$ we conclude that
$$
\mathbb{P}\Big(|y(v_1,\ldots,v_N)|\geq \gamma \sqrt{\pi}\epsilon\sqrt{N}/2\Big)\geq 1-\epsilon.
$$
\qed \end{proof}

\begin{theorem}\label{randpoly}
Given $\epsilon>0$ we have that
\begin{equation}
\mathbb{P}\,\Big(\max_{|z|=1}|P(z)|^2\geq \exp(\gamma\sqrt{\pi}\epsilon\sqrt{N}/2)\Big)\geq 1-\epsilon
\end{equation}
for $N$ sufficiently large.
\end{theorem}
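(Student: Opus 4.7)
The plan is to translate the small ball inequality for $y(v)$ proved in the previous theorem into a statement about $\max_{|z|=1}|P(z)|^2$, exploiting the identity \eqref{fund} together with the symmetry $|P_{(v)}(w)|\cdot|P_{(v)}(-w)|=1$ established in Lemma \ref{lemma11}.

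First I would fix a realization of the random points $\{z_i\}_{i=1}^N$ on the unit circle and note that this completely determines the pair set $\mathcal P$, hence also the point $w=e^{i\theta}$ given by Lemma \ref{lemma11} and the numbers $\alpha_i=|w-z_i|$, $\beta_i=|w+z_i|$. On this conditional probability space, the previous theorem gives
\[
\mathbb{P}\Big(|y(v_1,\ldots,v_N)|\geq \gamma\sqrt{\pi}\epsilon\sqrt{N}/2 \,\Big|\, \{z_i\}\Big)\geq 1-\epsilon,
\]
where the randomness is over the i.i.d.\ signs $v_i$. By \eqref{fund}, this says that with conditional probability at least $1-\epsilon$,
\[
\log|P_{(v)}(w)|^2 \geq \gamma\sqrt{\pi}\epsilon\sqrt{N}/2 \quad\text{or}\quad \log|P_{(v)}(w)|^2 \leq -\gamma\sqrt{\pi}\epsilon\sqrt{N}/2.
\]

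Next I would use Lemma \ref{lemma11} applied to the polynomial $P_{(v)}$: since the pair set is unchanged when $z_i$ is replaced by $v_i z_i$, the same $w$ satisfies $|P_{(v)}(w)|\cdot|P_{(v)}(-w)|=1$. Therefore in the second case above, $|P_{(v)}(-w)|^2=|P_{(v)}(w)|^{-2}\geq \exp(\gamma\sqrt{\pi}\epsilon\sqrt{N}/2)$. In either case, evaluating the maximum at $w$ or $-w$ yields
\[
\max_{|z|=1}|P_{(v)}(z)|^2 \geq \exp\!\big(\gamma\sqrt{\pi}\epsilon\sqrt{N}/2\big),
\]
still with conditional probability at least $1-\epsilon$.

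Finally, I would remove the dependence on signs via a distributional identification: since the uniform law on the unit circle is invariant under multiplication by $-1$, the collection $\{v_i z_i\}$ with i.i.d.\ uniform $z_i$ and independent Rademacher $v_i$ has the same joint distribution as $\{z_i\}$. Consequently the random polynomial $P_{(v)}$ has the same law as $P$, and integrating the conditional statement over $\{z_i\}$ gives the desired unconditional inequality. The only step requiring extra care is the possibility that the cardinality $m$ of the set $\{i : z_i\notin\mathcal A\}$ may differ from $\lfloor N/2\rfloor$; as remarked in the preceding proof this is a technicality handled by the Littlewood--Offord bound \eqref{LOT} which applies whenever at least a linear fraction of the coefficients $\log(\alpha_i/\beta_i)$ are bounded below by $\gamma$, something that holds with probability tending to $1$ by the law of large numbers, so it contributes only a vanishing correction absorbable into the $o(N^{-1/2})$ term for $N$ large enough.
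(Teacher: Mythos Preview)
Your proposal is correct and follows essentially the same route as the paper: condition on the pair set, invoke the preceding small-ball bound on $|y(v)|$, use the relation $|P_{(v)}(w)|\,|P_{(v)}(-w)|=1$ to convert the two-sided bound into a lower bound on $\max\{|P_{(v)}(w)|^2,|P_{(v)}(-w)|^2\}$, and then remove the conditioning via the distributional identity between $\{v_i z_i\}$ and $\{z_i\}$. Your phrasing of the last step (rotation invariance of the uniform law) is in fact a bit more explicit than the paper's ``removing the conditioning on the pairs,'' and your comment on the $m\neq\lfloor N/2\rfloor$ issue correctly locates that technicality in the previous theorem rather than here.
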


\begin{proof}
As done before, we start by randomly generating $n$ pairs of diametrically opposite points $(z_{i},-z_{i})$ and find $w$ as in Lemma \ref{lemma11}. Finally, fix the $z_i$ by the $N$ independent coin flips $(v_{1},\ldots,v_{N})$ and condition on this event. We observed before that $|P_{(v_1,\ldots,v_N)}(-w)|=|P_{(v_1,\ldots,v_N)}(w)|^{-1}$. Therefore,
$$
\log|P_{(v_1,\ldots,v_N)}(w)|=-\log|P_{(v_1,\ldots,v_N)}(-w)|.
$$
Let $a(w)$ be 
\begin{eqnarray*}
a(w) & = & \log|P_{(v_1,\ldots,v_N)}(w)|-\log|P_{(v_1,\ldots,v_N)}(-w)|\\
& = & \log|P_{(v_1,\ldots,v_N)}(w)|^2.
\end{eqnarray*}
Then by the previous Theorem and (\ref{fund}) we see that
$$
\mathbb{P}\Big(|a(w)| \geq \gamma\sqrt{\pi}\epsilon\sqrt{N}/2\Big)\geq 1-\epsilon.
$$
Since $a(-w)=-a(w)$ we clearly see that
$$
\mathbb{P}\big(\max\big(a(w),a(-w)\big)\geq \sqrt{\pi}\epsilon\sqrt{N}/2\big)\geq 1-\epsilon.
$$
Therefore, 
\begin{equation}
\mathbb{P}\,\Big(\max_{|z|=1}{\max \{\log|P_{(v_1,\ldots,v_N)}(z)|^2,\log|P_{(v_1,\ldots,v_N)}(-z)|^2 \} } \geq \sqrt{\pi}\epsilon\sqrt{N}/2\Big) \geq 1-\epsilon. 
\end{equation}
Now removing the conditioning on the pairs $\{z_1,-z_1,\ldots,z_N,-z_N\}$ we see that,
\begin{equation}
\mathbb{P}\,\Bigg(\max_{|z|=1}|P(z)|^2\geq \exp\big(\sqrt{\pi}\epsilon\sqrt{N}/2\big)\Bigg)\geq 1-\epsilon
\end{equation}
for $N$ sufficiently large.
\qed \end{proof}
Since we already saw in Lemma \ref{lem_polyineq} that
$$
\lambda_{1}(N)\leq \frac{4N^2}{\max_{|z|=1}|P(z)|^2}
$$
the following Theorem follows immediately.

\begin{theorem}\label{main_comb}
Given $\epsilon>0$ we have that
\begin{equation}
\mathbb{P}\,\Big(\lambda_1(N)\leq 4N^2\exp(-\sqrt{\gamma \pi}\epsilon\sqrt{N}/2)\Big)\geq 1-\epsilon
\end{equation}
for $N$ sufficiently large.
\end{theorem}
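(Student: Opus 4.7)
The plan is to observe that this theorem is an immediate combination of two results already in hand: Theorem \ref{randpoly}, which provides a high-probability lower bound on $\max_{|z|=1}|P(z)|^2$, and Lemma \ref{lem_polyineq}, which relates the minimum eigenvalue $\lambda_1(N)$ to the same quantity $\max_{|z|=1}\prod_q|z-z_q|^2 = \max_{|z|=1}|P(z)|^2$.

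First, I would recall that by the second inequality of Lemma \ref{lem_polyineq},
\[
\lambda_1(N) \leq \frac{4N^2}{\max_{|z|=1}\prod_q|z-z_q|^2} = \frac{4N^2}{\max_{|z|=1}|P(z)|^2},
\]
so any event on which $\max_{|z|=1}|P(z)|^2$ is large forces $\lambda_1(N)$ to be small. Next, I would invoke Theorem \ref{randpoly} for the given $\epsilon>0$: for $N$ sufficiently large,
\[
\mathbb{P}\!\left(\max_{|z|=1}|P(z)|^2 \geq \exp\!\big(\gamma\sqrt{\pi}\,\epsilon\sqrt{N}/2\big)\right) \geq 1-\epsilon.
\]

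The conclusion then follows by taking reciprocals inside the probability. On the event where the bound from Theorem \ref{randpoly} holds, substitute into the Lemma to obtain $\lambda_1(N) \leq 4N^2 \exp(-\gamma\sqrt{\pi}\,\epsilon\sqrt{N}/2)$, which matches the statement (up to the minor notational discrepancy between $\gamma\sqrt{\pi}$ and $\sqrt{\gamma\pi}$, which appears to be a typographical slip in the theorem statement; one can simply carry the constant $\gamma\sqrt{\pi}/2$ through). Since no genuine obstacle arises — the analytic work was done in proving Theorem \ref{randpoly} via Lemma \ref{lemma11} and the Littlewood--Offord bound, and the matrix-theoretic work in Lemma \ref{lem_polyineq} via Hadamard's inequality — the proof consists essentially of a one-line substitution, and the \qed is reached immediately.
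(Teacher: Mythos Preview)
Your proposal is correct and matches the paper's own argument essentially verbatim: the paper also derives Theorem \ref{main_comb} as an immediate consequence of Lemma \ref{lem_polyineq} together with Theorem \ref{randpoly}, with no additional work. Your observation about the $\gamma\sqrt{\pi}$ versus $\sqrt{\gamma\pi}$ discrepancy is also apt; the paper simply states the result without commenting on this slip.
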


\section{Numerical Results}\label{sec_numer}

In this Section we present some numerical results for the behavior near the origin of the limit probability distribution of ${\bf V}^{*}{\bf V}$, and for the minimum eigenvalue $\lambda_1$. Let ${\bf V}$ be a square $N\times N$ random Vandermonde matrix with phases $\theta_{1},\theta_{2},\ldots, \theta_{N}$, which are i.i.d. random variables uniformly distributed on $[0,1]$. We know that the empirical eigenvalue distribution of ${\bf V}^{*}{\bf V}$ converges as $N\to\infty$ to a probability measure $\mu$. One question that we would like to address is: does the measure $\mu$ have an atom at zero?

Let $\{\lambda_{i}\}_{i=1}^{N}$ be the eigenvalues of ${\bf V}^{*}{\bf V}$. Given $\epsilon>0$ let us denote by $G_{N}(\epsilon)$ the average number of eigenvalues less than or equal to $\epsilon$, i.e.,
$$
G_{N}(\epsilon):=\frac{1}{N} \mathbb{E}\Big( \Big| \big\{\lambda_{i}<\epsilon\,:\,i=1,\ldots,N\big\} \Big| \Big).
$$
Therefore, if there is an atom at zero for the measure $\mu$ with mass $\mu\{0\}=\beta$, the following holds
\begin{equation}
\inf_{\epsilon>0}\,\liminf_{N\to\infty}\,G_{N}(\epsilon)=\beta.
\end{equation}

\begin{figure}[!Ht]
  \begin{center}
    \includegraphics[width=10cm]{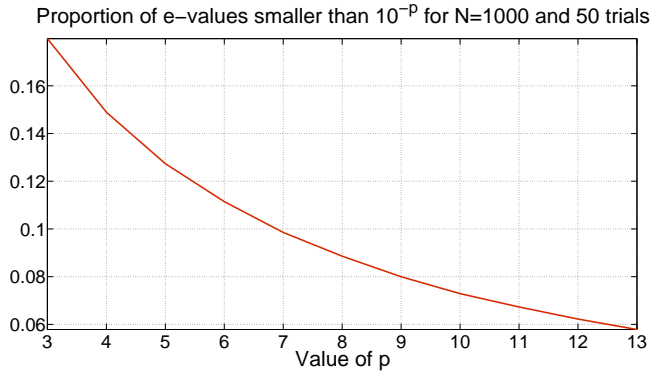}
    \caption{Graph of the average proportion of eigenvalues smaller than $10^{-p}$ as a function of $p$ for $N=1000$.}
    \label{atom}
  \end{center}
\end{figure}

In Figure \ref{atom}, we plot $G_{N}(10^{-p})$ as a function of $p$ for $N=1000$. These graphs suggest that if there is an atom, its mass has to be relatively small. Further simulations suggest the absence of an atom at zero. However, at the moment, we are unable to prove this result.  

%\subsection{Results for the Minimum Eigenvalue}
%Our theoretical results show that the minimum eigenvalue is decreasing extremely fast. In Figure \ref{fig_dyadicplt}, we show a realization of $T_N$ for a subset of the %dyadic fractions
%and a large value of $N$, which indicates its qualitative behaviour. The maximum value over the values of $\varphi$ selected is around 4. In Figure \ref{fig_TNmaxplt}, we %have obtained an empirical pdf for the above maximum.
%\begin{figure}[!Ht]
%  \begin{center}
%    \includegraphics[width=10cm]{Tvarphi_15R20N.png}
%    \caption{$T_N(\varphi)$ for $2^{15}$ dyadic rationals with $N = 10^6$.}
%    \label{fig_dyadicplt}
%  \end{center}
%\end{figure}
%\begin{figure}[!Ht]
%  \begin{center}
%    \includegraphics[width=10cm]{Tmax_10R_32N.png}
%    \caption{$\max_r T_{N,R}(\varphi_r)$ for $2^{15}$ dyadic rationals with $N = 32$ and 100,000 trials.}
%    \label{fig_TNmaxplt}
%  \end{center}
%\end{figure}
%%In Figure \ref{fig_mineig}, we show a rescaled version for the actual minimum eigenvalue. The right tail has been truncated so that the minimum value is at least $10^{-15}$. The results indicate %that the distribution of the minimum eigenvalue lies to the right of that for $T_N$. 
%We show the results for $\max_r T_{N,R}(\varphi_r)$ when $R=2^{15}$.
%%\begin{figure}[!Ht]
%%  \begin{center}
%%    \includegraphics[width=10cm]{mineig32.png}
%%    \caption{$L_N = -\frac{2}{\sqrt{N}} \log \frac{\lambda_1}{2N^2}$ with $N=32$ and 100,000 trials.}
%%    \label{fig_mineig}
%%  \end{center}
%%\end{figure}
\begin{figure}[!Ht]
  \begin{center}
    \includegraphics[width=10cm]{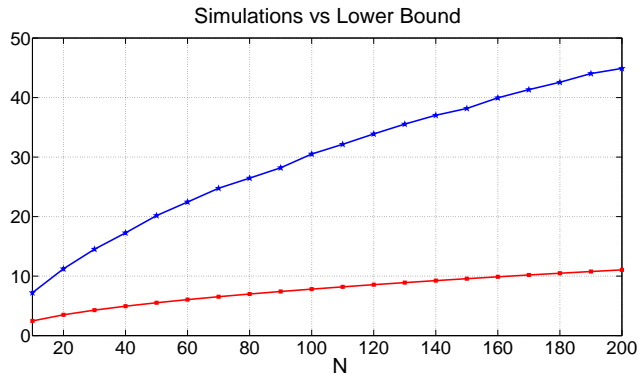}
    \caption{Graphs of the average of $2\log \max_{|z=1|}|P(z)|$ (blue) and $\sqrt{\gamma \pi}\epsilon\sqrt{N}/2$ (red) as a function of $N$ where $2\log \max_{|z=1|}|P(z)|$ was averaged over $1000$ realizations.}
    \label{ff1}
  \end{center}
\end{figure}

Finally, we present some numerical results for the behavior of the maximum of a random polynomial on the unit circle in the context of Theorem \ref{randpoly}. In Figure \ref{ff1}, we show the graphs of $2\log \max_{|z=1|}|P(z)|$ and $\sqrt{\gamma \pi}\epsilon\sqrt{N}/2$ as a function of $N$. This graph suggests that Theorem \ref{randpoly} could be slightly improved.

\section{Generalized Random Vandermonde Matrix}

In this Section we present a generalized version of the previously discussed random Vandermonde matrices. More specifically, consider an increasing sequence of integers $\{k_{p}\}_{p=1}^{\infty}$ and let $\{\theta_{1},\ldots,\theta_{N}\}$ be i.i.d. random variables uniformly distributed on the unit interval $[0,1]$. Let $\bf{V}$ be the $N\times N$ random matrix defined as
\begin{equation} 
V(p,q):=\frac{1}{\sqrt{N}}z_{q}^{k_p}
\end{equation} 
where $z_{q}:=e^{2\pi i\theta_{q}}$. Note that if we consider the sequence $k_p=p-1$ then the matrix $\bf{V}$ is the usual random Vandermonde matrix defined in  (\ref{eqn_Vandermondedefn}). We are interested in understanding the limit eigenvalue distribution for the matrices $\bf{X}:=\bf{V}\bf{V}^{*}$ and in particular their asymptotic moments. Let $r\geq 0$ and let us define the $r$--th asymptotic moment as 
\begin{equation}
m_{r}:=\lim_{N\to\infty} \mathbb{E}\Big( \mathrm{tr}_{N}({\bf X}^{r})\Big).
\end{equation}
These moments, as well as the limit eigenvalue distribution, depend on the sequence $\{k_{p}\}_{p=1}^{\infty}$. 

\begin{remark}
It is a straight forward calculation to see that $m_0=m_{1}=1$, $m_{2}=2$ and $m_3=5$ no matter what is the sequence $\{k_{p}\}_{p=1}^{\infty}$. The first interesting case happens when $r$ is equal to 4. These is because $r=4$ is the first positive integer where there is a non--crossing partition, namely the partition $\rho=\{\{1,3\},\{2,4\}\}$.
\end{remark}

The next Theorem shows a combinatorial expression for the moments as well as the existence of the limit eigenvalue distribution.

\begin{theorem}
Let $\{k_{p}\}_{p=1}^{\infty}$ be an increasing sequence of positive integers. Then
\begin{equation}
m_r = \sum_{\rho\in \mathcal{P}(r)}{K_{\rho}}
\end{equation}
where $\mathcal{P}(r)$ is the set of partitions of the set $\{1,2,\ldots,r\}$ and 
\begin{equation}
K_{\rho}:=\lim_{N\to\infty}\frac{|S_{\rho,N}|}{N^{r+1-|\rho|}}
\end{equation}
where $|\rho|$ is the number of blocks of $\rho$ and 
\begin{equation}
S_{\rho,N}:=\Big\{(p_1,\ldots,p_r)\in\{1,2,\ldots,N\}^{r}\,:\,\sum_{i\in B_{j}}{k_{p_i}}=\sum_{i\in B_{j}}{k_{p_{i+1}}}  \Big\},
\end{equation}
where $B_{j}$ are the blocks of $\rho$. Moreover, there exists a unique probability measure $\mu$ supported in $[0,\infty)$ with these moments.
\end{theorem}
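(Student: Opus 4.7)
The plan is to reduce the computation of $\mathbb{E}[\mathrm{tr}_N({\bf X}^r)]$ to a counting problem by expanding the trace and exploiting independence. Since $X(a,b)=\frac{1}{N}\sum_{q}z_q^{k_a-k_b}$, one has
\[
\mathbb{E}[\mathrm{Tr}({\bf X}^r)] \;=\; \frac{1}{N^r}\sum_{p_1,\ldots,p_r=1}^{N}\;\sum_{q_1,\ldots,q_r=1}^{N}\mathbb{E}\Big[\prod_{j=1}^{r} z_{q_j}^{k_{p_j}-k_{p_{j+1}}}\Big],
\]
with cyclic convention $p_{r+1}=p_1$. For a fixed $q$-tuple, the equality pattern $q_i=q_j$ defines a partition $\sigma\in\mathcal{P}(r)$, and by the independence and uniformity of the phases the inner expectation factorizes over the blocks of $\sigma$; each block $B$ contributes $\mathbb{E}[z^{\sum_{i\in B}(k_{p_i}-k_{p_{i+1}})}]$ for $z$ uniform on the unit circle, which equals $\mathbf 1\{\sum_{i\in B}(k_{p_i}-k_{p_{i+1}})=0\}$. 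Thus the inner expectation is exactly the indicator of $(p_1,\ldots,p_r)\in S_{\sigma,N}$.

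Next I group the $q$-tuples by their equality partition. The number of $(q_1,\ldots,q_r)\in\{1,\ldots,N\}^{r}$ inducing exactly $\sigma$ equals the falling factorial $N(N-1)\cdots(N-|\sigma|+1)=N^{|\sigma|}+O(N^{|\sigma|-1})$. Combining and dividing by $N$ for the normalized trace,
\[
\mathbb{E}[\mathrm{tr}_N({\bf X}^r)] \;=\; \sum_{\sigma\in\mathcal{P}(r)} \frac{|S_{\sigma,N}|}{N^{r+1-|\sigma|}}\bigl(1+O(N^{-1})\bigr),
\]
so the formula $m_r=\sum_{\rho}K_{\rho}$ follows termwise once the limits $K_\sigma$ are shown to exist. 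This existence is the main obstacle: the $|\sigma|$ block equations sum to the telescoping identity $\sum_{i}(k_{p_i}-k_{p_{i+1}})=0$, so only $|\sigma|-1$ are independent, giving the upper bound $|S_{\sigma,N}|=O(N^{r+1-|\sigma|})$. Establishing the actual limit requires a Diophantine analysis of the block equations $\sum_{i\in B}k_{p_i}=\sum_{i\in B}k_{p_{i+1}}$: split the solutions into \emph{trivial} ones (where for each block the multisets $\{p_i\}_{i\in B}$ and $\{p_{i+1}\}_{i\in B}$ coincide) and \emph{nontrivial} ones (exploiting specific additive coincidences in the sequence $\{k_p\}$). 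The trivial solutions furnish an explicit combinatorial constant times $N^{r+1-|\sigma|}$ that depends only on $\sigma$, and the nontrivial contribution must be shown to admit a well-defined density by using the monotonicity of $\{k_p\}$.

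Finally, for the existence and uniqueness of the limiting probability measure I proceed through the moment problem. Because ${\bf X}={\bf V}{\bf V}^{*}$ is positive semidefinite, each averaged empirical spectral measure $\mathbb{E}[\hat\mu_N]$ is supported on $[0,\infty)$; boundedness of $\mathbb{E}[\mathrm{tr}_N({\bf X}^r)]\to m_r$ and Markov's inequality applied to any fixed $r\geq 1$ give tightness of $\{\mathbb{E}[\hat\mu_N]\}$. Helly's selection theorem then extracts a subsequential weak limit $\mu$, and uniform integrability of $t^{r}$ under $\mathbb{E}[\hat\mu_N]$ (guaranteed by the convergence of $m_{r+1}^{(N)}$) upgrades weak convergence to moment convergence, so $\int t^{r}\,d\mu=m_r$ for every $r$. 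To promote subsequential convergence to full convergence I invoke determinacy of the Stieltjes moment problem: the bound $|S_{\sigma,N}|\leq N^{r+1-|\sigma|}\cdot c_{\sigma}$ with $c_{\sigma}$ controlled combinatorially yields $m_r\leq B_r\cdot C^{r}r!$ (with $B_r$ the Bell number), which is well within Carleman's criterion $\sum_{r\geq 1} m_{2r}^{-1/(2r)}=\infty$. Hence the measure with moments $(m_r)$ is unique on $[0,\infty)$, and every subsequential limit must coincide with it, concluding the proof.
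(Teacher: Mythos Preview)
Your approach is essentially the paper's: expand the trace, group the inner indices $(q_1,\ldots,q_r)$ by their equality partition $\rho$, use uniformity of the phases to reduce the expectation to the indicator of $S_{\rho,N}$, and then invoke Carleman's criterion. The paper is in fact less careful than you on two points---it simply asserts the existence of $K_\rho$ and the bound $0\le K_\rho\le 1$ from a one-line ``$r+1-|\rho|$ free variables'' count (so $m_r\le B_r$ directly), and it applies Carleman without your Helly/tightness detour---so your flagging of the existence of the limit as ``the main obstacle'' is well taken even if your trivial/nontrivial decomposition is not needed to match the paper.
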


\begin{proof}

Given $r\geq 0$ then
\begin{eqnarray*}
\mathrm{tr}_{N}({\bf X}^{r}) & = & \frac{1}{N}\sum_{(p_1,\ldots,p_r)}{X(p_1,p_2)X(p_2,p_3)\ldots X(p_r,p_1)} \\
& = & \frac{1}{N^{r+1}}\sum_{(p_1,\ldots,p_r)} \sum_{(i_1,\ldots,i_r)} z_{i_1}^{(k_{p_1}-k_{p_2})}z_{i_2}^{(k_{p_2}-k_{p_3})}\ldots z_{i_r}^{(k_{p_r}-k_{p_1})}.
\end{eqnarray*}
The sequence $(i_1,i_2,\ldots,i_r)\in \{1,2,\ldots,N\}^{r}$ uniquely defines a partition $\rho$ of the set $\{1,2,\ldots,r\}$ (we denote this by $(i_1,\ldots,i_r)\mapsto \rho$)
where each block $B_{j}$ consists of the positions which are equal, i.e.,
$$
B_{j}=\{w_{j_1},\ldots,w_{j_{|B_j|}}\}
$$ 
where $i_{w_{j_1}}=i_{w_{j_2}}=\ldots=i_{w_{j_{|B_j|}}}$. Denote this common value by $W_{j}$. Then
\begin{equation}\label{genn}
\mathrm{tr}_{N}({\bf X}^{r}) = \frac{1}{N^{r+1}} \sum_{(i_1,\ldots,i_r)\mapsto \rho}\,\,\,\sum_{(p_1,\ldots,p_r)} \prod_{k=1}^{|\rho|}{z_{W_k}^{\sum_{i\in B_k}{(k_{p_i}-k_{p_{i+1}})}}}.
\end{equation}
Taking expectation on both sides we observe that 
$$
\mathbb{E}\Bigg(z_{W_k}^{\sum_{i\in B_k}{(k_{p_i}-k_{p_{i+1}})}}\Bigg)\neq 0
$$
if and only if $\sum_{i\in B_k}{(k_{p_i}-k_{p_{i+1}})}=0$. Let $S_{\rho,N}$ be the $r$-tuples $(p_1,\ldots,p_r)$ which solve the equations
\begin{equation}\label{ttt}
\sum_{i\in B_k}{k_{p_i}}=\sum_{i\in B_k}{k_{p_{i+1}}}
\end{equation}
for all the the blocks $k=1,2,\ldots,|\rho|$ and let $|S_{\rho,N}|$ be its cardinality. Let $K_{\rho}$ be defined as 
$$
K_{\rho}=\lim_{N\to\infty}\frac{|S_{\rho,N}|}{N^{r+1-|\rho|}}.
$$
Then it follows from (\ref{genn}) that 
$$
m_{r}=\sum_{\rho\in \mathcal{P}(r)}K_{\rho}.
$$
It is straight forward to see that the set of solutions of (\ref{ttt}) has $r+1-|\rho|$ free variables since one of the equations is redundant (the sum of all the equations is 0). Therefore, for every partition $\rho$ the value of $K_{\rho}$ satisfies $0\leq K_{\rho}\leq 1$. Then the moments are bounded by the Bell numbers $B_{r}=|\mathcal{P}(r)|$. Define, 
$$
\beta_{r}:=\inf_{k\geq r}{m_{k}^{\frac{1}{2k}}}\leq {B_{k}^{\frac{1}{2k}}}\leq \inf_{k\geq r}{\sqrt{k}}=\sqrt{r}.
$$
Hence, $\beta_{r}^{-1}\geq r^{-1/2}$ and therefore
$$
\sum_{r=1}^{+\infty}{\beta_{r}^{-1}}=+\infty.
$$
Therefore, by Carleman's Theorem \cite{Carleman} there exists a unique probability measure $\mu$ supported on $[0,+\infty)$ such that 
$$
m_{r}=\int_{0}^{+\infty}{t^{n}\,d\mu (t)}.
$$
In other words, the sequence $m_{r}$ is {\em distribution determining}. 
\qed \end{proof}

\begin{prop}
Let $\rho\in\mathcal{P}(r)$ then $K_{\rho}=1$ if and only if the partition $\rho$ is non--crossing.
\end{prop}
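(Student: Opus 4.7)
My plan is to prove both implications by induction on $r$, exploiting the structural fact that every non-crossing partition of a cyclic sequence of $r$ points admits at least one \emph{interval block} $B=\{s,s+1,\ldots,s+t\}$ of cyclically consecutive indices, and conversely that an interval block can never participate in a crossing pair (since any $a<b<c$ with $a,c\in B$ forces $b\in B$ as well).

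For the easy direction ($\rho$ non-crossing $\Rightarrow K_\rho=1$), I would first observe that the equation (\ref{ttt}) attached to an interval block telescopes:
$$\sum_{i\in B}(k_{p_i}-k_{p_{i+1}})=k_{p_s}-k_{p_{s+t+1}},$$
so the constraint is equivalent to $p_s=p_{s+t+1}$ by strict monotonicity of $\{k_p\}$. The interior positions $p_{s+1},\ldots,p_{s+t}$ do not appear in any other block's constraint and are therefore free, contributing a factor $N^t$. Removing them and merging $p_s$ with $p_{s+t+1}$ leaves a non-crossing partition $\rho'$ of $r-(t+1)$ cyclic positions into $|\rho|-1$ blocks. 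By induction $|S_{\rho',N}|\sim N^{r-t-|\rho|+1}$, so $|S_{\rho,N}|\sim N^t\cdot N^{r-t-|\rho|+1}=N^{r+1-|\rho|}$ and hence $K_\rho=1$.

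For the converse ($\rho$ crossing $\Rightarrow K_\rho<1$), I again induct and attempt the same reduction. Since an interval block cannot sit in a crossing pair, removing one from a crossing $\rho$ yields a still-crossing $\rho'$; induction then gives $K_{\rho'}<1$, and the reduction preserves ratios, so $K_\rho<1$. The genuine base case is a crossing $\rho$ with no interval block, the minimal instance being $\rho=\{\{1,3\},\{2,4\}\}$ at $r=4$, whose single (non-redundant) equation $k_{p_1}+k_{p_3}=k_{p_2}+k_{p_4}$ gives
$$|S_{\rho,N}|=\sum_{u}r(u)^2,\qquad r(u):=|\{(p,q)\in\{1,\ldots,N\}^2:k_p-k_q=u\}|,$$
i.e., the additive energy $E(A_N)$ of $A_N:=\{k_1,\ldots,k_N\}$. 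To close, I would invoke the extremal bound from additive combinatorics that over all $N$-element subsets of $\mathbb{Z}$, arithmetic progressions asymptotically maximize the additive energy with extremal value $E(A_N)\sim(2/3)N^3$, giving $K_\rho\leq 2/3<1$. For a general crossing $\rho$ with no interval block, one isolates any crossing pair of blocks $(B_1,B_2)$ and reduces the count to an analogous additive-energy estimate on a four-variable subsystem, with the other variables integrated out.

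The hard part will be the base case. The naive estimate $r(u)\leq N-1$ for $u\neq 0$ only gives $K_\rho\leq 1-O(1/N)$, which is not strict in the limit; the essential input is the sharp extremal additive-energy inequality, uniform over every strictly increasing integer sequence. Verifying in addition that the inductive reduction for a general crossing $\rho$ with no interval block always bottoms out in such a genuine energy count, rather than in some further degenerate subsystem, is an additional bookkeeping step I would need to carry out carefully.
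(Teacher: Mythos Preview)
The paper does not actually prove this proposition: it says the argument ``follows similarly to the one presented in \cite{GC02} for the sequence $k_p=p-1$'' and leaves it as an exercise. So there is no detailed proof to compare against, only the reference to Ryan--Debbah.

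Your forward direction (non-crossing $\Rightarrow K_\rho=1$) is correct and is the standard interval-block reduction; it works uniformly for every strictly increasing $\{k_p\}$ because the telescoped constraint $k_{p_s}=k_{p_{s+t+1}}$ pins down $p_s$ uniquely and frees the interior variables exactly as you say. This is almost certainly what the paper intends.

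Your converse has a genuine gap. The inductive removal of interval blocks is fine, and you are right that an interval block cannot participate in a crossing, so a crossing $\rho$ stays crossing after the removal. But the ``base cases'' to which you reduce --- crossing partitions with \emph{no} interval block --- form an \emph{infinite} family, not just $\{\{1,3\},\{2,4\}\}$. For example $\{\{1,3,5\},\{2,4,6\}\}$, $\{\{1,4\},\{2,5\},\{3,6\}\}$, $\{\{1,3,5,7\},\{2,4,6,8\}\}$ all have no interval block, and for these the governing constraint is a higher-order energy equation ($t{+}1$ summands on each side, $t\ge 1$), not a four-variable one. Your sentence ``isolate any crossing pair and reduce to a four-variable subsystem with the other variables integrated out'' is exactly the step that is not justified: integrating out the extra variables typically leaves a \emph{shifted} $(t{+}1)$-fold energy count, and bounding that uniformly below $N^{2t+1}$ by a fixed factor requires the extremal statement ``arithmetic progressions maximize $E_{t+1}(A)$ among all $N$-sets'' for every $t$, not just $t=1$.

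Even for the minimal case $t=1$ you invoke the sharp fact $E(A)\le(2/3+o(1))N^3$ uniformly over all $N$-element $A\subset\mathbb{Z}$. This is true (arithmetic progressions are extremal for additive energy), but it is a nontrivial theorem in its own right and not something that falls out of the setup; at minimum it needs a precise citation or a proof. As you yourself note, the naive bound $E(A)\le N^3$ is not strict in the limit.

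One way to organise the converse more cleanly, which you may find useful: the coefficient matrix of the system (\ref{ttt}) is (up to zero columns) the incidence matrix of a directed multigraph on the blocks, hence totally unimodular. Choosing the pivot columns along a spanning tree forces every determined $k_{p_{i_0}}$ to be a $\pm 1$ combination of free $k_{p_j}$'s with coefficient sum $1$. One then has to show (i) for crossing $\rho$ at least one such combination has a $-1$ present, and (ii) for any such $\pm 1$ combination with at least one $-1$, the fraction of free tuples landing in $A_N$ is bounded away from $1$. Step (ii) is still the higher-energy extremal statement, so the analytic input does not disappear, but at least the combinatorial reduction becomes uniform rather than an open-ended case analysis.
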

The proof of this results follows similarly to the one presented in \cite{GC02} for the sequence $k_{p}=p-1$ and we leave it as an exercise for the reader. 

\begin{example} Let $r=4$ and let $\rho=\{\{1,3\},\{2,4\}\}$. Then
$$
K_{\rho}=\lim_{N\to\infty}\frac{|S_{\rho,N}|}{N^3}
$$
where 
$$
S_{\rho,N}=\big\{(p_1,p_2,p_3,p_4)\in \{1,2,\ldots,N\}^{4}\,:\,k_{p_1}+k_{p_3}=k_{p_2}+k_{p_4}\big\}.
$$
For the case $k_{p}=p-1$ it was observed in \cite{GC02} that $K_{\rho}=2/3$. As a matter of fact, it is not difficult to see that $K_{\rho}$ is the volume of the polytope
$$
K_{\rho}=\mathrm{vol}\big(\{(x,y,z)\in [0,1]^3\,:\ 0\leq x+y-z\leq 1\}\big).
$$
This polytope is shown in Figure \ref{poly}. 
\begin{figure}[!Ht]
  \begin{center}
    \includegraphics[width=6cm]{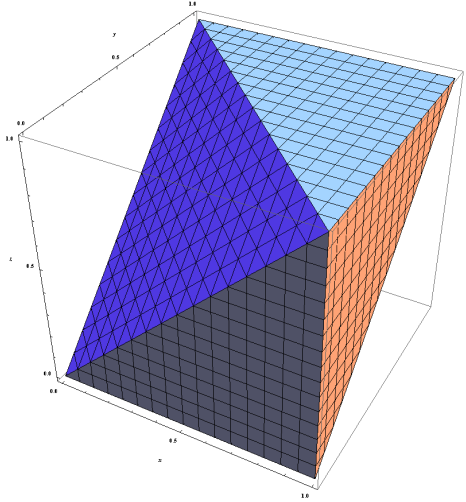}
    \caption{The polytope $(x,y,z)\in [0,1]^{3}$ such that $0\leq x+y-z\leq 1$.}
    \label{poly}
  \end{center}
\end{figure}
For the case $k_{p}=2^{p}$ we see that 
$$
S_{\rho,N}=\big\{(p_1,p_2,p_3,p_4)\in \{1,2,\ldots,N\}^{4}\,:\,2^{p_1}+2^{p_3}=2^{p_2}+2^{p_4}\big\}.
$$
For positive integers $\{a,b,c,d\}$ the equation $2^{a}+2^{b}=2^{c}+2^{d}$ holds if and only if $\{a,b\}=\{c,d\}$. Therefore, $|S_{\rho,N}|=2N^2$ and hence $K_{\rho}=0$.

\end{example}

The next Theorem shows that if $k_p=2^p$ then the limit eigenvalue distribution is the famous Marchenko--Pastur distribution. 

\begin{theorem}
Let $k_{p}=2^{p}$ then for every $r$ and $\rho\in\mathcal{P}(r)$ the coefficient $K_{\rho}=0$ if the partition is crossing. Hence
$$
m_{r}=|NC(r)|
$$
the number of non--crossing partitions and $\mu$ is the Marchenko--Pastur distribution
$$
d\mu(x)=\frac{1}{2\pi}\sqrt{\frac{4-x}{x}}\mathbf{1}_{[0,4]}.
$$
\end{theorem}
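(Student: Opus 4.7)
The plan is to prove the theorem in three stages: reduction to crossing partitions, verification that $K_\rho = 0$ for crossing $\rho$ when $k_p = 2^p$, and identification of the resulting moment sequence with the Marchenko--Pastur law. By the preceding Proposition, $K_\rho = 1$ whenever $\rho$ is non-crossing, for any sequence $\{k_p\}$. Granted the crossing-vanishing, this immediately gives
\begin{equation*}
m_r \;=\; \sum_{\rho \in \mathcal{P}(r)} K_\rho \;=\; \sum_{\rho \in NC(r)} 1 \;=\; |NC(r)| \;=\; C_r,
\end{equation*}
the $r$-th Catalan number. Since the Catalan numbers are the moments of the Marchenko--Pastur distribution on $[0,4]$, and the measure with these moments is uniquely determined by Carleman's criterion (as established in the previous theorem), this identifies $\mu$ with the claimed distribution.

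For the crux---showing $K_\rho = 0$ when $\rho$ is crossing---I would exploit the near-uniqueness of binary expansions: an equation $\sum_{i \in A} 2^{a_i} = \sum_{i \in B} 2^{b_i}$ with $|A| = |B|$ forces the multisets $\{a_i\}_{i \in A}$ and $\{b_i\}_{i \in B}$ to coincide, apart from an exceptional set of solutions arising from carry identities such as $2^a + 2^a = 2^{a+1}$. Writing $B_j^+ := \{i + 1 \bmod r : i \in B_j\}$, this dichotomy splits $S_{\rho, N}$ into \emph{matching} tuples, for which $\{p_i : i \in B_j\} = \{p_i : i \in B_j^+\}$ holds for every block, and \emph{exceptional} tuples, whose coordinates satisfy at least one non-trivial integer-linear relation and therefore lie in a subset of cardinality $O(N^{r - |\rho|})$. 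For a matching tuple, the multiset equalities are witnessed by bijections $\sigma_j : B_j \to B_j^+$ with $p_{\sigma_j(i)} = p_i$; these assemble into a single bijection $f : [r] \to [r]$ with $f|_{B_j} = \sigma_j$, and matching tuples are precisely those $(p_i)$ constant on orbits of $f$. The count of matching tuples is therefore bounded by a constant (depending only on $\rho$) times $N^{\max_{(\sigma_j)} c(f)}$, where $c(f)$ denotes the number of orbits of $f$.

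The heart of the argument is the combinatorial lemma
\begin{equation*}
\max_{(\sigma_j)} \; c(f) \;\leq\; r + 1 - |\rho|, \qquad \text{with equality if and only if } \rho \text{ is non-crossing.}
\end{equation*}
For non-crossing $\rho$, the canonical planar choice of $\sigma_j$'s achieves the bound $r + 1 - |\rho|$; for crossing $\rho$, any choice loses at least one orbit. Granted this lemma, crossing $\rho$ gives matching tuples bounded by $O(N^{r - |\rho|})$, which together with the exceptional bound yields $|S_{\rho, N}| = O(N^{r - |\rho|}) = o(N^{r + 1 - |\rho|})$, and hence $K_\rho = 0$. The combinatorial lemma is the main obstacle: I would attack it via Euler's formula for factorizations of the cyclic shift in the symmetric group $S_r$, viewing $(f, s, \rho)$ as a factorization whose genus vanishes precisely when $\rho$ is non-crossing. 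A secondary but more routine step is the dimension bookkeeping for exceptional tuples, which reduces to observing that any non-trivial relation $\sum c_i 2^{p_i} = 0$ with bounded $|c_i|$ pins down at least one $p_i$ as an integer-affine function of the others, strictly reducing the dimension of the solution set.
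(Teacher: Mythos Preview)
Your approach is genuinely different from the paper's, and in some ways more robust. The paper argues by first splitting off all blocks of $\rho$ that do not cross any other block (using a multiplicativity $K_\rho = K_{\rho_1}K_{\rho_2}$) and then asserting, for the residual fully crossing partition $\rho_1$, that $2^{a_1}+\cdots+2^{a_n}=2^{b_1}+\cdots+2^{b_n}$ forces $\{a_1,\ldots,a_n\}=\{b_1,\ldots,b_n\}$; from this it claims each block equation eliminates at least two variables, giving $|S_{\rho_1,N}|=O(N^{s+1-2|\rho_1|})$. That multiset assertion is actually false for $n\geq 3$ (for instance $2^3+2^1+2^1=2^2+2^2+2^2$), so the paper's counting step is not literally correct as written, though the conclusion $K_{\rho_1}=0$ survives. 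Your route---separating matching from exceptional solutions and bounding the matching contribution via the genus inequality for factorizations of the long cycle---sidesteps this issue entirely and gives the sharp exponent $r-|\rho|$ directly for crossing $\rho$. The genus lemma you invoke (equivalently, $c(\tau)+c(s\tau)\leq r+1$ for block-preserving $\tau$, with equality forcing the block structure of $\tau$ to be non-crossing) is standard and your reduction to it is correct. What the paper's decomposition buys is brevity; what yours buys is that it actually handles the carry solutions the paper glosses over.

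The one place your sketch is thin is the exceptional bound. Saying a carry identity ``pins down one $p_i$ as an integer-affine function of the others'' is not quite the right picture: it pins down $2^{p_i}$ linearly, not $p_i$ affinely. What you need is that a non-matching block forces a same-side coincidence $p_i=p_{i'}$, that the corresponding linear form $x_i-x_{i'}$ (in the variables $x_\ell=2^{p_\ell}$) is independent of the $|\rho|-1$ block forms, and finally that $k$ independent linear relations on $(2^{p_1},\ldots,2^{p_r})$ force the number of tuples $(p_i)\in[N]^r$ to be at most $N^{r-k}$ (solve for $k$ of the $x_\ell$ in terms of the rest; each determined $x_\ell$ admits at most one $p_\ell\in[N]$). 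Each of these steps is short, but the independence check does need to be written out; it is not automatic for every pair $(i,i')$.
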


\begin{proof}
We already observed that $K_{\rho}=1$ iff the partition is non--crossing. Therefore, we need to show that for every crossing partition $K_{\rho}=0$. Let $\rho\in\mathcal{P}(r)$ be a crossing partition 
with blocks $\{B_1,B_2,\ldots,B_{|\rho|}\}$. Let $I$ be the set of indices such that for $i\in I$ the block $B_{i}$ does not cross any other block $B_{j}$. Then we can decompose $\rho$ as $\rho = \rho_1\cup \rho_{2}$ where $\rho_{2}=\cup_{i\in I}{B_{i}}$ is the union of all the non--crossing blocks. Then by the definition of $K_{\rho}$ we see that $K_{\rho}=K_{\rho_1}K_{\rho_2}$. Now we need to show that $K_{\rho_1}=0$. Up to a re-enumeration, if necessary, we see that $\rho_1\in\mathcal{P}(s)$ where $s\leq r$. By definition every block of $\rho_1$ crosses at least another block. For every $n$--tuples of positive integers $(a_1,\ldots,a_n)$ and $(b_1,\ldots,b_n)$ the equation
$$
2^{a_1}+2^{a_2}+\ldots+2^{a_n}=2^{b_1}+2^{b_2}+\ldots+2^{b_n}
$$
implies that $\{a_1,\ldots,a_n\}=\{b_1,\ldots,b_n\}$. Hence, every equation in $S_{\rho_1,N}$ eliminates at least two variables and therefore $|S_{\rho_1,N}|=O(N^{s+1-2|\rho_1|})$. This implies that
$$
K_{\rho_1}=\lim_{N\to\infty}\frac{|S_{\rho_1,N}|}{N^{s+1-|\rho_1|}}=0
$$ 
finishing the proof.
\qed \end{proof}

In Figure \ref{MP}, we see the histogram of the matrix $\bf{V}\bf{V}^{*}$ for the sequence $k_p=2^{p}$ and $N=100$ over $1000$ trials in comparison with the Marchenko--Pastur distribution. As it can be appreciated even for $N$ as small as 100 the two are not to far apart. In Figure \ref{classic}, we see the histogram of the eigenvalues of $\bf{V}\bf{V}^{*}$ for $k_p=p-1$ and $N=100$.
\begin{figure}[!Ht]
  \begin{center}
    \includegraphics[width=10cm]{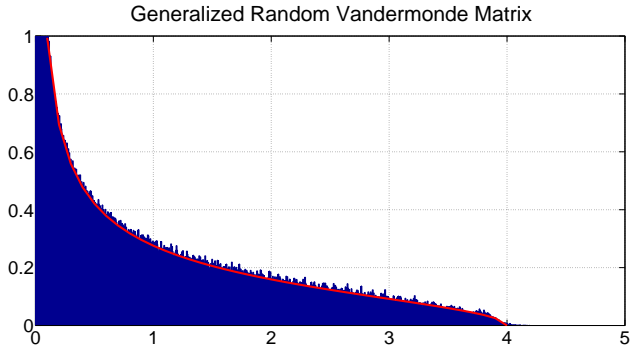}
    \caption{The blue graph is the histogram of eigenvalues of the matrix $\bf{V}\bf{V}^{*}$ for the sequence $k_p=2^{p}$ and $N=100$ over $1000$ trials. The red curve is the Marchenko--Pastur distribution.}
    \label{MP}
  \end{center}
\end{figure}
\begin{figure}[!Ht]
  \begin{center}
    \includegraphics[width=10cm]{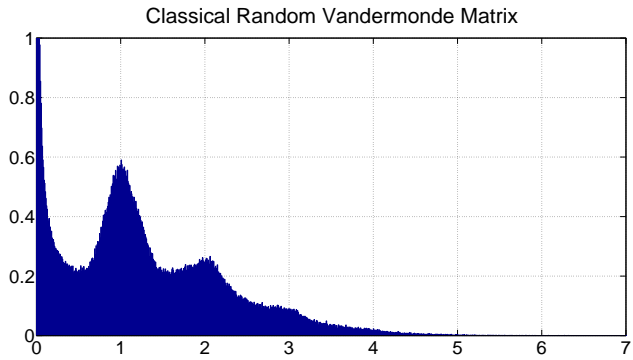}
    \caption{Histogram of the eigenvalues of matrix $\bf{V}\bf{V}^{*}$ for the sequence $k_p=p-1$ and $N=100$ over $1000$ trials.}
    \label{classic}
  \end{center}
\end{figure}
The case $k_{p}=p^2$ is an interesting one (as well as the cases $k_{p}=p^{a}$). At the moment we don't understand what is the limit eigenvalue distribution for this sequence. For instance, is it true that $K_{\rho}=0$ for every crossing partition? Is it true that $K_{\rho}=0$ for the partition $\rho=\{\{1,3\},\{2,4\}\}$? In a private communication with Prof. Carl Pomerance it was indicated that $|S_{\rho,N}|$ is of the order $O(N^2\log(N))$. However, we are not providing a proof of this fact. In Figure \ref{ffg}, we show the values of $|S_{\rho,N}|/N^3$ as a function of $N$ and we compare it with the case $k_{p}=2^{p}$. 
\begin{figure}[!Ht]
  \begin{center}
    \includegraphics[width=10cm]{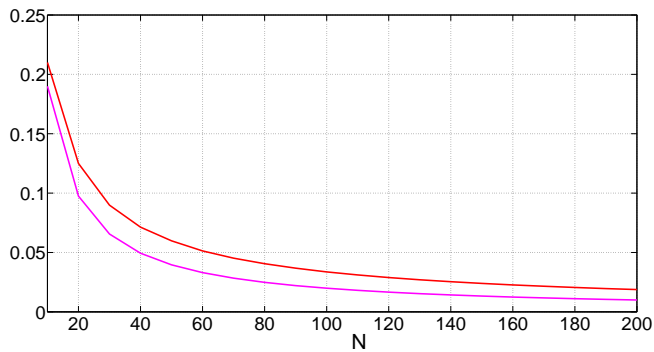}
    \caption{This figure shows $|S_{\rho,N}|/N^3$ for the sequence $k_{p}=p^2$ (red) and $k_{p}=2^{p}$ (magenta).}
    \label{ffg}
  \end{center}
\end{figure}

\end{document}